\theoremstyle{plain}
\renewcommand{\theequation}{\arabic{section}.\arabic{equation}}
\renewcommand\thefigure{\thesection.\@arabic\c@figure}
\renewcommand{\thefigure}{\arabic{section}.\arabic{figure}}
\newtheorem{thm}{\bf Theorem}
\newtheorem{cor}{\bf Corollary}
\newenvironment{corollary}{\begin{cor}} {\end{cor}}
\newtheorem{lmm}{\bf Lemma}
\theoremstyle{example}
\theoremstyle{remark}
\newtheorem{rem}{\bf Remark}[section]
\theoremstyle{definition}
\newtheorem{defn}{\bf Definition}[section]
\numberwithin{table}{section}
\newcommand{\pt}[1][t]{\partial_{#1}}
\newcommand{\px}[1][x]{\partial_{#1}}
\newcommand{\py}[1][y]{\partial_{#1}}
\def \af {\alpha}
\def \bt {\beta}
\DeclareMathOperator{\esssup}{ess\;sup}
\newcommand{\bs}[1]{\boldsymbol{#1}}
\renewcommand \wedge \times
\begin{document}
	\bibliographystyle{plain}
	\graphicspath{{./figures/}}

\title[Maxwell's equations in Havriliak-Negami media] {Analysis of A Backward Euler-type Scheme  for Maxwell's Equations in A Havriliak-Negami Dispersive Medium}
\author[Y.B. Yang,\;\; L.L. Wang   \;$\&$\; F.H. Zeng]{Yubo Yang$^\dag$,\;\; Li-Lian Wang$^\ddag$ \;and\; Fanhai Zeng$^\P$ }
	
\thanks{${}^\dag$Nanhu College, Jiaxing University, Jiaxing, Zhejiang,  314001, China. Email: boydman\_xm@zjxu.edu.cn. \\
\indent ${}^\ddag${\em Corresponding author}. Division of Mathematical Sciences, School of Physical and Mathematical Sciences, Nanyang Technological University, 637371, Singapore.  The research of this author is partially supported by Singapore MOE AcRF Tier 2 Grants: MOE2017-T2-144 \& MOE2018-T2-1-059. Email: lilian@ntu.edu.sg.\\
\indent ${}^\P$School of Mathematics, Shandong University, Jinan, Shandong, 250100, China. Email: fanhai\_zeng@sdu.edu.cn.\\
\indent The first  author would like to thank NTU  for hosting his visit devoted to this collaborative work.}
	
\begin{abstract}
For the Maxwell's equations in a Havriliak-Negami (H-N) dispersive medium, the associated energy dissipation law has not been settled at both continuous level and discrete level. In this paper, we rigorously show that the energy of the H-N model
can be bounded by the initial energy and the model is well-posed. We analyse a backward Euler-type semi-discrete scheme, and prove that the modified discrete energy decays monotonically in time.  Such a strong stability ensures that the scheme is unconditionally stable. We  also introduce a fast temporal convolution algorithm to alleviate the burden of the history dependence
in the polarisation relation involving the singular kernel with the  Mittag-Leffler function with three parameters.
We provide ample numerical results to demonstrate the efficiency and accuracy of a full-discrete scheme via a
spectra-Galerkin method in two  dimensions.  Finally, we consider an interesting application in the recovery of complex relative
permittivity and some related physical quantities.
\end{abstract}
	
\keywords{Maxwell's equations, Havriliak-Negami dispersive medium, strong stability, unconditionally stable scheme, fast temporal convolution algorithm}

\subjclass[2010]{65N35, 65E05, 65N12,  41A10, 41A25, 41A30, 41A58}	
	
\maketitle

\vspace*{-6pt}

\section{Introduction} \label{sect1}

\setcounter{equation}{0}
\setcounter{lmm}{0}
\setcounter{thm}{0}

In electromagnetism, the most general model for a dispersive dielectric material, i.e. a material with frequency-dependent permittivity, is the Havriliak-Negami (H-N) dielectric model (see, e.g., \cite{havriliak1966complex,havriliak1967complex,kelley1999piecewise,raju2016dielectrics}).  In this model, the complex relative  permittivity is expressed as
\begin{equation}
\label{hnperm}
\epsilon_r(\omega)=\epsilon_{\infty}+\frac{\epsilon_s-\epsilon_{\infty}}{\left(1+(i\omega\tau_0)^{\alpha}\right)^{\beta}},
\end{equation}
where $0<\alpha, \beta \leq 1$, $\epsilon_{\infty}, \epsilon_s$ and $\tau_0$ are the infinite-frequency permittivity, the static permittivity and the relaxation time respectively, and $\epsilon_s, \epsilon_{\infty}$ satisfy $\epsilon_s > \epsilon_{\infty} \geq 1$. Furthermore, $i=\sqrt{-1}$ denotes the imaginary unit, and $\omega$ is the angular frequency.
All the anomalously dispersive dielectric models are its subclasses.
 When $\alpha=\beta=1$, the H-N model reduces to the Debye model \cite{debye1929polar}, while the H-N model reduces to Davidson-Cole (D-C) model \cite{davidson1951dielectric} when $\alpha=1,$ and to Cole-Cole (C-C) model when $\beta=1$ \cite{cole1941dispersion}. Such models arise from diverse fields, which typically
 include biological tissues \cite{polk1995handbook, biswas2017fractional,lopes2018fractional}, soils \cite{repo1996application}, amorphous polymers near the glass-liquid transition \cite{garcia2004amorphous}, glassy materials \cite{alegria1997alpha} among others.

In general, there are two main  strategies to simulate the electromagnetic wave propagations in dispersive media  based on
different treatments of the relation between the electric flux and  electric field intensity, governed by the polarisation equation.
The first is to introduce certain auxiliary function and related auxiliary differential equation (ADE) to deal with the  polarisation.
The second  is to formulate the polarisation as a time convolution integral equation of the electric field.   For the Debye or Debye-type model, such as Drude or Lorentz model, its time-domain expression of its relative complex permittivity can be easily formulated, because its relative complex permittivity is a function of integer powers of $i\omega$. Therefore, both approaches  can be applied.
In particular,  the ADE involves derivatives of integer order that can  be  discretised by the finite difference methods \cite{taflove2005computational,Elsherbeni2015} as usual. However, the relative complex permittivity of the H-N, D-C or C-C model involves a function of non-integer powers of $i\omega$, so its  representation in  the time domain is much more complicated. In fact, the polarisation relations  are oftentimes integro-differential equations with global fractional operators  \cite{Podlubny1999,li2011developing,Rekanos2012b,huang2018accurate,causley2011incorporating}, which pose  significant difficulties and are much more expensive to solve.

In regards to the C-C model, fractional ADE-based time-domain methods were proposed  in e.g. \cite{li2011developing,Rekanos2010Auxiliary,schuster1998fdtd,torres1996application},  where the polarisation equation involves the fractional-in-time Riemann-Liouville derivative (cf. \cite{Podlubny1999}). As such, much recent development in numerical fractional differential equations casts light on time discretisation of this model.
However, there has been very limited works on numerical solutions of the  D-C and H-N models, 
largely due to that the polarisation relation cannot be expressed in terms of ADE with usual fractional differential operators.
%
Nevertheless, some interesting attempts include the approximation of  the D-C or H-N model  by the  Debye model in the frequency domain \cite{Rekanos2012b,rekanos2012fdtd,kelley2007debye,Chakarothai2019}; or  by the C-C model in the frequency domain \cite{Schonhals1991Fast,Mescia2014Fractional,bia2015novel,antonopoulos2017fdtd}.
We remark that most works related to H-N, C-C or D-C model above are implemented by the finite difference time-domain (FDTD) method (cf. \cite{taflove2005computational}), and the  stability and convergence analysis is yet unavailable.
On the other hand, Li et al. \cite{li2011developing},  and Huang and Wang \cite{huang2018accurate} developed a finite element time-domain (FETD) method (based on a fractional differential form
 of the polarisation equation) and a spectral time-domain method (based on an integro-differential formulation) for the C-C model, respectively. Stability and convergence analysis were also provided in these two works.

In this paper, we propose and analyse  a time-domain numerical method for solving the H-N model with the polarisation relation formulated by
an integral equation involving  a singular kernel function in terms of  the Mittag-Leffler (ML) function with three parameters.
We highlight our main contributions as follows.

\begin{itemize}
\item With the aid of some useful properties of the ML function, we prove that the  energy of the H-N model can be controlled by the initial energy, which
ensures the well-posedness of the model and plays an important role in developing stable numerical methods.
\smallskip
\item We conduct a delicate and rigorous analyse of a semi-discrete scheme which can  incorporate various spatial discretisation.
More precisely, we propose a first-order backward-Euler-type scheme, and show  for the first time that  the discrete energy (with a modification of the continuous energy by adding a history part)  decays monotonically.  This strong stability guarantees that the scheme is  unconditionally stable and is essential for the convergence analysis.  However,  it appears nontrivial to show this  if one works with the fractional differential form
of the polarisation relation in the context of the C-C model \cite{li2011developing}.   

 \smallskip

\item A fast temporal convolution algorithm for the H-N model is realised by following some basic ideas in \cite{Lubich2002fast,zeng2018stable}, which requires $O(\log N_t)$ storage and $O(N_t \log N_t)$ operations over $N_t$ time steps, when only cost in time direction is considered. Here, $N_t$ represents the total number of time steps. Note that the direct implementation of the scheme \eqref{frarect} would require $O(N_t)$ storage and $O(N_t^2)$ operations, which is computational expensive and forms a bottleneck for long time simulation. 
\end{itemize}



The rest of this paper is organised as follows. In the next section, we introduce  the Havriliak-Negami dispersive dielectric model, and conduct the stability analysis. In section \ref{Sect3}, we propose a time discrete scheme for the H-N model,
and provide its stability and error analysis. In section \ref{Sect4}, we implement  a fast temporal convolution algorithm, and illustrate spatial discretisation through a two-dimensional H-N model. Then  we supply with ample numerical results to demonstrate the efficiency and accuracy of the proposed scheme. Furthermore, we apply the solver to an interesting application
in the recovery of the complex relative permittivity and some related physical quantities. Finally, concluding remarks are made in Section \ref{Sect5}, and some important properties of the Mittag-Leffler function are collected in Appendix \ref{AppendixA}.

At the end of this section, we introduce some notations to be used throughout the paper.  Let $C$ (sometimes with a subindex) denote a generic constant independent of the time step size $\Delta t$ and the space parameter $N$. For $r\ge 0,$ let $H^r(\Omega)$ (resp. ${\bm H}^r(\Omega)$) be the usual  Sobolev space with $H^0(\Omega)=L^2(\Omega)$ (resp. ${\bm H}^0(\Omega)={\bm L}^2(\Omega)$) for the scalar (resp. vector-valued) functions on a bounded domain $\Omega$ with Lipschitz boundary.  As usual, we denote the inner product and norm of both ${L}^2(\Omega)$ and ${\bm L}^2(\Omega)$ by $(\cdot, \cdot)$ and $\|\cdot\|,$ respectively. With a little abuse of notation, we  denote the norms of both ${L}^{\infty}(\Omega)$ and ${\bm L}^{\infty}(\Omega)$ by $\|\cdot\|_{\infty}$.
Given  a Hilbert space $\bm X$ with the norm $\|\cdot\|_{\bm X}$, we define the spaces $L^{\infty}\left(0,T;{\bm X}\right)$ and $L^{2}\left(0,T;{\bm X}\right)$ with the norms
$$\|\bm U\|_{L^{\infty}\left(0,T;{\bm X}\right)}=\mathop{\esssup}_{0\le t \le T}\|{\bm U}(\cdot, t)\|_{\bm X}, \quad \|\bm U\|_{L^2\left(0,T;{\bm X}\right)}=\Big(\int_0^T\|{\bm U}(\cdot, t)\|_{\bm X}^2{\rm d}t\Big)^2.$$
We further introduce
$$H^{k}\left(0,T;{\bm X}\right)\triangleq \big\{{\bm v} \in L^{2}\left(0,T;{\bm X}\right)\,:\, \pt^{\ell}{\bm v} \in L^{2}\left(0,T;{\bm X}\right),~1\le \ell \le k\big\}
$$
with the norm $\|\cdot\|_{H^{k}(0,T;{\bm X})}$ (cf. \cite{Quarteroni1994}).
We also use some common notation (cf. \cite{monk2003finite})
$$H({\rm curl};\Omega)=\big\{{\bm v} \in {\bm L}^2(\Omega);\; \nabla\times{\bm v} \in {\bm L}^2(\Omega)\big\}, \quad H_0({\rm curl};\Omega)=\big\{{\bm v} \in H({\rm curl};\Omega);\; {\bm n}\times{\bm v}=0\; {\rm on}\; \partial \Omega\big\}.$$


\section{The Havriliak-Negami dispersive dielectric model}\label{Sect2}
\setcounter{equation}{0}
\setcounter{lmm}{0}
\setcounter{thm}{0}

In an  H-N medium, the time-domain Maxwell's equations take the form (cf.  \cite{havriliak1966complex,havriliak1967complex}):
\begin{equation}\label{hnte}
\epsilon_{0}\epsilon_{\infty} \pt{\bm E}
 =  \nabla\times{\bm H} -  \pt{\bm P}, \quad \mu_{0}\pt {\bm H}
 =  -\nabla\times{\bm E}\quad\;\;      {\rm in} \;\; \Omega \times (0,T],
\end{equation}
where ${\bm P}({\bm x},t)$ is the induced electric polarisation given by   
\begin{equation}
\label{polartimeOri}
{\bm P}({\bm x},t)=\int_{0}^{t} \xi_{\alpha,\beta}(t-s) {\bm E}({\bm x},s)\, {\rm d}s,  \quad  \forall\, (\bm x, t)\in \Omega \times (0,T].
\end{equation}
Here $\xi_{\alpha,\beta}$ is the time-domain susceptibility kernel which involves the inverse Laplace transform as follows
\begin{equation}\label{addA1}
\xi_{\alpha,\beta}(t):=\mathscr{L}^{-1}\bigg[\frac{\epsilon_0(\epsilon_s-\epsilon_{\infty})}
{\left(1+(s\tau_0)^{\alpha}\right)^{\beta}}\bigg](t),  
\end{equation}
and $\tau_0, \epsilon_s, \epsilon_{\infty}, \alpha, \beta$ are given in \eqref{hnperm}. As usual,  ${\bm E}$ is the electric field, ${\bm H}$ is the magnetic field, and $\epsilon_0, \mu_0$ are the permittivity and permeability of the free space, respectively.
The system  \eqref{hnte}-\eqref{polartimeOri}  is  supplemented with the perfect electrical conductor (PEC) condition
\begin{eqnarray}
\label{hntebOri}
{\bm n}\times {\bm E}=\bm 0 \quad {\rm at} \;\; \partial \Omega \times (0,T],
\end{eqnarray}
and the initial conditions
\begin{eqnarray}
\label{hnteiOri}
{\bm E}({\bm x},0) = {\bm E}_0({\bm x}),\quad {\bm H}({\bm x},0) = {\bm H}_0({\bm x}),\quad {\bm P}({\bm x},0)=\bm 0\quad {\rm in} \;\; \Omega,
\end{eqnarray}
where the last condition is a direct consequence of the representation  \eqref{polartimeOri}. Here the constitutive relations in an H-N medium are
\begin{eqnarray*}
{\bm D} = \epsilon_{0}\epsilon_{\infty}{\bm E} +{\bm P}, \quad {\bm B} = \mu_{0}{\bm H},
\end{eqnarray*}
where ${\bm D}$ is the electric flux density, and ${\bm B}$ is the magnetic flux density.

As the values of the parameters $\epsilon_0$, $\mu_0$ and $\tau_0$ are excessively small (of order $10^{-12}$,  $10^{-7}$ and  $10^{-12},$ respectively), we find it  is  more desirable to rescale the model for both computational and analysis purposes. Indeed,  the  introduction of  non-dimensional quantities can  avoid dealing with excessively small or large numbers in finite-precision arithmetic (cf. \cite[P. 294]{demkowicz2006computing}).

\begin{lmm}\label{RescaledHN} Using the substitutions and change of variables
\begin{eqnarray}
\label{scaling}
{\bm E}\to \sqrt{\epsilon_0}{\bm E}, \quad {\bm P}\to \frac{1}{\sqrt{\epsilon_0}}{\bm P}, \quad
{\bm H}\to \sqrt{\mu_0}\,{\bm H}, \quad t \to \frac{t}{\tau_0}, \quad {\bm x} \to
 \frac{{\bm x}}{c_0\tau_0},\quad c_0:=
 \frac{1}{\sqrt{\epsilon_0\mu_0}},
\end{eqnarray}
we can convert the system  \eqref{hnte}-\eqref{hnteiOri} into
\begin{subequations}\label{scaledsystem}
\begin{numcases}{}
\epsilon_{\infty} \pt{\bm E} +\pt{\bm P} = \nabla\times{\bm H}, \quad
\pt{\bm H} =-\nabla\times{\bm E} \quad {\rm in} \;\; \Omega \times (0,T], \label{rehnmodele}\\[0.2cm]
{\bm P}({\bm x},t)={\Delta \epsilon}\int_{0}^{t}  e_{\alpha,\alpha\beta}^{\beta}(t-s;-1){\bm E}({\bm x},s)\,{\rm d}s\quad {\rm in} \;\;  \Omega \times (0,T],\label{rehnmodelp}\\[0.2cm]
{\bm E}({\bm x},0) = {\bm E}_0({\bm x}),~~{\bm H}({\bm x},0) = {\bm H}_0({\bm x}),~~{\bm P}({\bm x},0)=\bm 0 \quad {\rm in} \;\; \Omega,\label{rehnmodeli}\\[0.2cm]
{\bm n}\times {\bm E}=\bm 0 \quad {\rm at} \;\;  \partial \Omega \times (0,T], \label{rehnmodelb}
\end{numcases}
\end{subequations}
where $\Delta \epsilon:=\epsilon_s-\epsilon_{\infty}$, and
\begin{equation}\label{gml}
e_{\rho,\mu}^{\gamma}(t;\sigma)=t^{\mu-1}E_{\rho,\mu}^{\gamma}(\sigma t^{\rho}), \quad
E_{\rho,\mu}^{\gamma}(z)=\sum_{k=0}^{\infty} \frac{\Gamma(k+\gamma) } {\Gamma(k) \Gamma(\rho k+\mu)}\frac{z^k}{k!},
\end{equation}
i.e.,  the Mittag-Leffler {\rm(}ML{\rm)} function with three parameters {\rm(}also known as the Prabhakar function, see   \cite{gorenflo2014mittag,prabhakar1971singular} and Appendix \ref{AppendixA}{\rm)}.
\end{lmm}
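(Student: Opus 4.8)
The plan is to verify the change of variables \eqref{scaling} by direct substitution, treating separately the differential block \eqref{hnte} and the convolution \eqref{polartimeOri}, and — after the elementary cancellation of physical constants — to reduce the whole statement to a single identity for the susceptibility kernel, which is then settled by a Laplace-transform computation.

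First I would make precise what \eqref{scaling} means: introduce $s=t/\tau_0$, $\bm y=\bm x/(c_0\tau_0)$ and the rescaled unknowns $\widehat{\bm E}(\bm y,s)=\sqrt{\epsilon_0}\,\bm E(\bm x,t)$, $\widehat{\bm H}(\bm y,s)=\sqrt{\mu_0}\,\bm H(\bm x,t)$, $\widehat{\bm P}(\bm y,s)=\epsilon_0^{-1/2}\bm P(\bm x,t)$. By the chain rule $\pt=\tau_0^{-1}\partial_s$ and $\nabla_{\bm x}\times=(c_0\tau_0)^{-1}\nabla_{\bm y}\times$. Substituting into the two equations of \eqref{hnte}, multiplying through by $\tau_0$, and using $c_0=1/\sqrt{\epsilon_0\mu_0}$ (so that $1/(c_0\sqrt{\mu_0})=\sqrt{\epsilon_0}$ and $1/(c_0\sqrt{\epsilon_0})=\sqrt{\mu_0}$), every power of $\epsilon_0,\mu_0,\tau_0,c_0$ cancels and one obtains exactly \eqref{rehnmodele} for $(\widehat{\bm E},\widehat{\bm H},\widehat{\bm P})$. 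The PEC condition \eqref{hntebOri} and the data \eqref{hnteiOri} transform trivially: $\bm n\times\bm E=\bm 0$ is preserved under multiplication of $\bm E$ by a positive constant and under the dilation of the domain, while $\widehat{\bm E}(\cdot,0)=\sqrt{\epsilon_0}\,\bm E_0$, $\widehat{\bm H}(\cdot,0)=\sqrt{\mu_0}\,\bm H_0$ define the new initial data and $\widehat{\bm P}(\cdot,0)=\bm 0$ again follows from the integral representation.

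Next, for the polarisation I would start from \eqref{polartimeOri}, insert the relations above, and change the integration variable $\sigma=\tau_0\sigma'$; a short computation shows that the transformed relation is precisely \eqref{rehnmodelp} provided one has the kernel identity
\begin{equation*}
\frac{\tau_0}{\epsilon_0}\,\xi_{\alpha,\beta}(\tau_0 t)=\Delta\epsilon\, e_{\alpha,\alpha\beta}^{\beta}(t;-1),\qquad t>0 .
\end{equation*}
To prove this I would apply the Laplace transform to both sides. On the left, the scaling rule $\mathscr{L}[g(\tau_0\cdot)](p)=\tau_0^{-1}(\mathscr{L}g)(p/\tau_0)$ applied to the definition \eqref{addA1} gives $\mathscr{L}\big[\tau_0\epsilon_0^{-1}\xi_{\alpha,\beta}(\tau_0\cdot)\big](p)=\Delta\epsilon\,(1+p^{\alpha})^{-\beta}$. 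On the right, I would use the classical Laplace transform of the Prabhakar function, $\mathscr{L}\big[t^{\mu-1}E_{\rho,\mu}^{\gamma}(\lambda t^{\rho})\big](p)=p^{\rho\gamma-\mu}(p^{\rho}-\lambda)^{-\gamma}$, valid for $\operatorname{Re}p$ large, and specialise it via \eqref{gml} to $\rho=\alpha$, $\mu=\alpha\beta$, $\gamma=\beta$, $\lambda=-1$: the numerator exponent collapses because $\rho\gamma-\mu=\alpha\beta-\alpha\beta=0$, so $\mathscr{L}[e_{\alpha,\alpha\beta}^{\beta}(\cdot;-1)](p)=(1+p^{\alpha})^{-\beta}$. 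Hence both sides have the same Laplace transform; since each is locally integrable on $(0,\infty)$ (with only an integrable $t^{\alpha\beta-1}$ singularity at the origin as $\alpha\beta>0$), injectivity of the Laplace transform yields the identity, and assembling the three pieces gives \eqref{scaledsystem}.

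The constant-chasing in the differential equations is routine; the one step needing care is this kernel identity, i.e.\ recognising that, up to the scaling by $\tau_0$ and $\epsilon_0$, the inverse Laplace transform defining $\xi_{\alpha,\beta}$ is exactly the three-parameter Mittag--Leffler (Prabhakar) function $e_{\alpha,\alpha\beta}^{\beta}(\,\cdot\,;-1)$. The delicate point is purely the bookkeeping of the Prabhakar parameters together with the observation $\rho\gamma-\mu=0$ that makes the power of $p$ disappear; one should also note that the transform formula for $E^\gamma_{\rho,\mu}$ holds a priori only in a right half-plane (where the defining series converges after the binomial expansion of $(p^\rho-\lambda)^{-\gamma}$) and is then propagated by analytic continuation before invoking uniqueness.
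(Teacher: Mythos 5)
Your proposal is correct and follows essentially the same route as the paper: the differential equations, boundary and initial conditions are handled by routine constant-chasing, and the substantive step is recognising, via the Laplace-transform pair $\mathscr{L}[e_{\alpha,\alpha\beta}^{\beta}(\cdot;\sigma)](p)=p^{\alpha\beta-\alpha\beta}(p^{\alpha}-\sigma)^{-\beta}$, that the susceptibility kernel $\xi_{\alpha,\beta}$ is a rescaled Prabhakar function, after which the change of variables $s\to s/\tau_0$, $t\to t/\tau_0$ yields \eqref{rehnmodelp}. The only cosmetic difference is ordering: the paper identifies $\xi_{\alpha,\beta}(t)=\frac{\epsilon_0\Delta\epsilon}{\tau_0^{\alpha\beta}}e_{\alpha,\alpha\beta}^{\beta}(t;-1/\tau_0^{\alpha})$ first and then rescales, whereas you rescale first and verify the resulting kernel identity by uniqueness of the Laplace transform.
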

\begin{proof} One verifies readily that with \eqref{scaling},  the rescaled system \eqref{scaledsystem} (except for \eqref{rehnmodelp}) can be reduced from  \eqref{hnte} and \eqref{hntebOri}-\eqref{hnteiOri}  directly.

Now,  we consider the derivation of \eqref{rehnmodelp}. According to \eqref{LTeML} and \eqref{addA1}, we have
\begin{eqnarray}
\label{LTHNker}
\xi_{\alpha,\beta}(t)=\frac{\epsilon_0{\Delta \epsilon}}{\tau_0^{\alpha\beta}}\mathscr{L}^{-1}\big[\left(s^{\alpha}+1/{\tau_0^\alpha}\right)^{-\beta}\big](t)
=\frac{\epsilon_0{\Delta \epsilon}}{\tau_0^{\alpha\beta}} e_{\alpha,\alpha\beta}^{\beta}\Big(t;-\frac{1}{{\tau_0^\alpha}}\Big).
\end{eqnarray}
Then \eqref{polartimeOri} can be written as
\begin{eqnarray*}
{\bm P}({\bm x},t)=\frac{\epsilon_0{\Delta \epsilon}}{\tau_0^{\alpha\beta}}\int_{0}^{t} e_{\alpha,\alpha\beta}^{\beta}(t-s;-1/{\tau_0^\alpha}) {\bm E}({\bm x},s)\,{\rm d}s.
\end{eqnarray*}
With the substitution $s \to \frac{s}{\tau_0}$ and $t \to \frac{t}{\tau_0}$, we can obtain \eqref{rehnmodelp} from the above. 
\end{proof}

Formally,  the rescaled polarisation relation \eqref{rehnmodelp} can be reformulated as a fractional ``differential" form using the Prabhakar integrals/derivatives (cf. \cite{garrappa2016models,giusti2020}), which turns out to be important for the
stability analysis of the re-scaled model  \eqref{scaledsystem}.

\begin{defn}[{see \cite[(B.19)-(B.23)]{garrappa2016models} or \cite[(5.3)-(5.10)]{giusti2020}}]\label{PrabhakarOper} {\em For a function $f(t) \in L^1(0,T)$, the Prabhakar integral of order $\alpha, \beta > 0$ and with the parameter $\varrho > 0$ can be defined by
\begin{eqnarray}
\left(_0{\mathcal {J}}^{\alpha}_t+\varrho\right)^{\beta}f(t)=\int_{0}^{t}e_{\alpha,\alpha\beta}^{\beta}(t-s;-\varrho)f(s)\,{\rm d}s,\quad t\in (0,T).
\label{PrabhakarOper1}
\end{eqnarray}
If, in addition,  $0 < \alpha\beta <1$, the left-inverse of the above integral operator is the special derivative
\begin{eqnarray}
\left(_0{\mathcal {D}}^{\alpha}_t+\varrho\right)^{\beta}f(t)=\frac{\rm d}{{\rm d} t}\int_{0}^{t}e_{\alpha,1-\alpha\beta}^{-\beta}(t-s;-\varrho)f(s) \,{\rm d}s.
\label{PrabhakarOper2}
\end{eqnarray}
For an absolutely continuous function $f(t)$, the Caputo-type derivative as the counterpart of the above derivative operator can be defined as
\begin{equation}\label{PrabhakarOper3}
{^{C}\!\!\left(_0{\mathcal {D}}^{\alpha}_t+\varrho\right)^{\beta}}f(t)
=\left(_0{\mathcal {D}}^{\alpha}_t+\varrho\right)^{\beta}\left(f(t)-f(0^+)\right)
=\int_{0}^{t}e_{\alpha,1-\alpha\beta}^{-\beta}(t-s;-\varrho)f'(s)\,{\rm d}s.
\end{equation}}
\end{defn}

In view of  \eqref{PrabhakarOper1} with $\varrho=1$,  we can write  \eqref{rehnmodelp} as
\begin{eqnarray}
\label{caputodef01}
{\bm P}({\bm x},t)={\Delta \epsilon} \left(_0{\mathcal {J}}^{\alpha}_t+1\right)^{\beta} \bm E(\bm x,t),
\end{eqnarray}
Taking the left-inverse operation \eqref{PrabhakarOper2} on both sides of \eqref{caputodef01},  we obtain immediately from  \eqref{PrabhakarOper3} that
\begin{eqnarray}
\label{caputodef0}
{\left(_0{\mathcal {D}}^{\alpha}_t+1\right)}^{\beta} {\bm P}({\bm x},t)={^{C}\!\!\left(_0{\mathcal {D}}^{\alpha}_t+1\right)}^{\beta} {\bm P}({\bm x},t)={\Delta \epsilon}\, {\bm E}({\bm x},t).
\end{eqnarray}
It is noteworthy that when $\beta=1$ (i.e., the C-C model), the involved fractional derivatives simply become the usual fractional Riemann-Liouville derivative and Caputo derivative operators as in \cite{Podlubny1999}.
In fact, fractional ADE-based approaches for the C-C model are based upon such a formulation. However,  for the general H-N model,  we find the integral formulation  \eqref{caputodef01} is more suitable for the implementation, but the formulation \eqref{caputodef0} is useful in the   analysis.

\begin{lmm}\label{lempositiveker} If $0<\alpha,\beta \leq 1$ and $\varrho>0,$ then
the kernel $e_{\alpha,1-\alpha\beta}^{-\beta}(t;-\varrho)$ in \eqref{PrabhakarOper2} and \eqref{caputodef0} is positive-definite in the sense that
\begin{eqnarray*}
\int_{0}^{T}\phi(t)\int_{0}^{t}e_{\alpha,1-\alpha\beta}^{-\beta}(t-s;-\varrho)\phi(s)\, {\rm d}s\, {\rm d}t \geq 0,\quad \forall \phi \in C[0,T].
\end{eqnarray*}
\end{lmm}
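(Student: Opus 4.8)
The plan is to recognize the kernel $e_{\alpha,1-\alpha\beta}^{-\beta}(t;-\varrho)$ as a completely monotone function of $t$, and then invoke the classical fact that a convolution operator with a completely monotone (or, more precisely, positive-definite in the Bochner sense) kernel is nonnegative in the sense of the quadratic form displayed in the statement. The decisive analytic input is the Laplace transform identity from Lemma \ref{RescaledHN}'s proof, namely \eqref{LTeML}: with $\rho=\alpha$, $\mu=1-\alpha\beta$, $\gamma=-\beta$, $\sigma=-\varrho$, we get
\begin{equation*}
\mathscr{L}\big[e_{\alpha,1-\alpha\beta}^{-\beta}(t;-\varrho)\big](s)=\frac{s^{\alpha\beta+\alpha\beta-1}}{(s^{\alpha}+\varrho)^{\beta}}\cdot s^{-\alpha\beta}=\frac{s^{\alpha\beta-1}}{(s^{\alpha}+\varrho)^{\beta}}=\frac{1}{s}\Big(\frac{s^{\alpha}}{s^{\alpha}+\varrho}\Big)^{\beta}.
\end{equation*}
The point is that $\widehat{K}(s):=\big(s^{\alpha}/(s^{\alpha}+\varrho)\big)^{\beta}$ is a Bernstein-type / Stieltjes function: for $0<\alpha,\beta\le 1$ and $\varrho>0$ it maps the right half-plane into itself and is positive on $(0,\infty)$, and one checks $\widehat{K}$ has a representation as a (completely monotone-generating) function so that $\mathscr{L}[e_{\alpha,1-\alpha\beta}^{-\beta}(\cdot;-\varrho)](s)=\widehat{K}(s)/s$ is the Laplace transform of a completely monotone function. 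Hence $e_{\alpha,1-\alpha\beta}^{-\beta}(t;-\varrho)$ is completely monotone on $(0,\infty)$, and in particular nonnegative; by Bernstein's theorem it is the Laplace transform of a positive measure, which is exactly the condition guaranteeing positive-definiteness of the convolution form.

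First I would record the Laplace transform computation above explicitly, then I would establish that $e_{\alpha,1-\alpha\beta}^{-\beta}(t;-\varrho)\ge 0$ and, more strongly, that it is completely monotone — either by the Stieltjes/Bernstein function argument sketched above (using that compositions and powers $\le 1$ of Bernstein functions behave well, and that $s\mapsto s^{\alpha}/(s^{\alpha}+\varrho)$ is a known complete Bernstein function for $0<\alpha\le1$), or, if one prefers an elementary route, by writing $(s^{\alpha}+\varrho)^{-\beta}$ via the Gamma-function identity $(s^{\alpha}+\varrho)^{-\beta}=\frac{1}{\Gamma(\beta)}\int_0^\infty u^{\beta-1}e^{-u(s^{\alpha}+\varrho)}\,\mathrm{d}u$ and then using the known complete monotonicity of $t\mapsto$ (inverse Laplace transform of $s^{\alpha\beta-1}e^{-us^{\alpha}}$), which is a rescaled stable density. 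Either way one concludes $e_{\alpha,1-\alpha\beta}^{-\beta}(t;-\varrho)=\int_0^\infty e^{-t\lambda}\,\mathrm{d}\nu(\lambda)$ for a positive measure $\nu$.

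With that representation in hand, the quadratic form becomes, by Fubini,
\begin{equation*}
\int_{0}^{T}\phi(t)\int_{0}^{t}e_{\alpha,1-\alpha\beta}^{-\beta}(t-s;-\varrho)\phi(s)\,\mathrm{d}s\,\mathrm{d}t
=\int_0^\infty\!\!\int_0^T\!\!\int_0^t e^{-\lambda(t-s)}\phi(t)\phi(s)\,\mathrm{d}s\,\mathrm{d}t\,\mathrm{d}\nu(\lambda),
\end{equation*}
and for each fixed $\lambda\ge0$ the inner double integral equals $\tfrac12\int_0^T\!\!\int_0^T e^{-\lambda|t-s|}\phi(t)\phi(s)\,\mathrm{d}s\,\mathrm{d}t=\tfrac12\big\|\int_0^T e^{-\lambda|\cdot-s|}\phi(s)\mathrm{d}s\big\|$-type expression — more cleanly, $\int_0^T\!\int_0^t e^{-\lambda(t-s)}\phi(t)\phi(s)=\tfrac12\int_0^T e^{2\lambda t}\big(\tfrac{\mathrm d}{\mathrm dt}\int_0^t e^{-\lambda s}\phi(s)\mathrm ds\big)\cdot$ ... so I would instead just note the standard fact that $e^{-\lambda|t-s|}$ is a positive-definite kernel on $[0,T]$ (its Fourier transform $2\lambda/(\lambda^2+\xi^2)\ge0$), whence the inner integral is $\ge0$; integrating against $\mathrm{d}\nu\ge0$ gives the result. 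The main obstacle is the first step — rigorously justifying complete monotonicity (equivalently, nonnegativity of the inverse Laplace transform) of $e_{\alpha,1-\alpha\beta}^{-\beta}(t;-\varrho)$ for the full parameter range $0<\alpha,\beta\le1$, since the defining series \eqref{gml} has $\Gamma(\rho k+\mu)=\Gamma(\alpha k+1-\alpha\beta)$ in the denominator and $\Gamma(k+\gamma)=\Gamma(k-\beta)$ in the numerator with sign-changing terms, so positivity is not visible termwise and genuinely requires the Laplace-transform/subordination argument; once that is in place, the convolution-positivity step is routine and could even be cited.
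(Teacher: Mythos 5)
Your overall strategy (prove the kernel is completely monotone, then get positive-definiteness from the Bernstein representation and the positive-definiteness of $e^{-\lambda|t-s|}$) is viable in principle, but the decisive Laplace-transform computation is wrong, and the error propagates exactly into the step you yourself flag as the main obstacle. Applying \eqref{LTeML} with $\rho=\alpha$, $\mu=1-\alpha\beta$, $\gamma=-\beta$, $\sigma=-\varrho$ gives $\rho\gamma-\mu=-\alpha\beta-(1-\alpha\beta)=-1$ and $(s^{\rho}-\sigma)^{\gamma}=(s^{\alpha}+\varrho)^{-\beta}$, hence
\[
\mathscr{L}\big[e_{\alpha,1-\alpha\beta}^{-\beta}(t;-\varrho)\big](s)=\frac{s^{-1}}{(s^{\alpha}+\varrho)^{-\beta}}=\frac{(s^{\alpha}+\varrho)^{\beta}}{s},
\]
not $\frac{1}{s}\bigl(\frac{s^{\alpha}}{s^{\alpha}+\varrho}\bigr)^{\beta}$: since $\gamma=-\beta$, the factor $(s^{\alpha}+\varrho)^{-\beta}$ sits in the denominator and flips to a \emph{positive} power in the numerator (compare the paper's own evaluation $(\varrho+(i\omega)^{\alpha})^{\beta}/(i\omega)$ at $s=i\omega$). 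The transform you wrote down is in fact that of $e_{\alpha,1}^{\beta}(t;-\varrho)$, a different function, and your proposed ``elementary'' route via $(s^{\alpha}+\varrho)^{-\beta}=\frac{1}{\Gamma(\beta)}\int_0^{\infty}u^{\beta-1}e^{-u(s^{\alpha}+\varrho)}\,{\rm d}u$ is tied to that wrong transform: no such subordination formula can produce the positive power $(s^{\alpha}+\varrho)^{+\beta}$. So, as written, the complete-monotonicity argument addresses the wrong kernel and the proof of the lemma is not complete. The conclusion you want is nevertheless true and the repair is standard: $(s^{\alpha}+\varrho)^{\beta}$ is a complete Bernstein function (composition of $s\mapsto s^{\alpha}+\varrho$ and $x\mapsto x^{\beta}$ with $0<\alpha,\beta\le 1$), and $f(s)/s$ is a Stieltjes function whenever $f$ is complete Bernstein; hence the kernel equals a nonnegative constant $\varrho^{\beta}$ plus a completely monotone function (with a Dirac component only in the borderline case $\alpha=\beta=1$), after which your Fubini argument with the positive-definite kernels $e^{-\lambda|t-s|}$ goes through.

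For comparison, the paper's proof avoids complete monotonicity altogether: it invokes the frequency-domain criterion of \cite{mclean1996discretization}, by which it suffices that ${\rm Re}\,\mathscr{L}[\mathcal K](i\omega)\ge 0$ for all $\omega>0$, and verifies this directly from $(\varrho+(i\omega)^{\alpha})^{\beta}/(i\omega)$ in polar form, where the only point is $\sin(\beta\theta)\ge 0$ because $\theta\in(0,\pi/2)$ and $0<\beta\le 1$. That is weaker information than complete monotonicity but exactly what the quadratic-form inequality needs, and it sidesteps the Bernstein/Stieltjes machinery on which your route (and its current gap) rests.
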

\begin{proof}  According to  \cite[(1.2)]{mclean1996discretization},  it suffices to show that  the kernel function
$\mathcal{K}(t):=e_{\alpha,1-\alpha\beta}^{\;-\beta}(t;-\varrho)$ satisfies
\begin{eqnarray*}
{\rm Re}\big\{\mathscr{L}\left[\mathcal{K}(t)\right]( i \omega)\big\} \geq 0, \quad \forall\, \omega>0,
\end{eqnarray*}
where ${\rm Re}\{u\}$ stands for the real part of $u$ and $i$ is the imaginary unit.
Using  \eqref{LTeML} with $\gamma=-\beta, \rho=\alpha, \mu=1-\alpha\beta,\sigma=\varrho$ and
$s=i\omega,$ we find from direct calculation that
\begin{equation*}
\begin{split}
\mathscr{L}\left[\mathcal{K}(t)\right]( i \omega)& =\frac{(i\omega)^{\alpha(-\beta)-(1-\alpha\beta)}}{((i\omega)^{\alpha}+\varrho)^{-\beta}} = \frac {(\varrho+(i\omega)^{\alpha})^{\beta}}{i\omega}
=-i \omega^{-1}\left(\varrho+\omega^\alpha \cos\frac{\pi \alpha}{2}+i \omega^\alpha \cos\frac{\pi \alpha}{2} \right)^\beta\\
& =-i \omega^{-1}r^{\beta}\left(\cos \beta\theta+ i \sin \beta\theta\right) =\omega^{-1}r^{\beta}\left(\sin \beta\theta-i\cos \beta\theta\right),
\end{split}
\end{equation*}
where
\begin{eqnarray*}
r=\sqrt{\varrho^2+2\omega^{\alpha}\cos\frac{\pi \alpha}{2}+\omega^{2\alpha}},\quad \tan\theta=\frac{\omega^{\alpha}\sin\frac{\pi \alpha}{2}}{\varrho+\omega^{\alpha}\cos\frac{\pi \alpha}{2}}.
\end{eqnarray*}
As the parameters $0<\alpha,\beta \leq 1$ and $\varrho>0,$ it is evident that  $\theta \in (0,\pi/2).$ Therefore,
\begin{eqnarray*}
{\rm Re}\big\{\mathscr{L}\left[\mathcal{K}(t)\right]( i \omega)\big\}= \omega^{-1}r^{\beta}\sin \beta\theta \geq 0, \quad \forall\,\omega>0,
\end{eqnarray*}
which completes the proof.
\end{proof}

\begin{rem}\label{HNdivergence} {\em With the aid of Lemma \ref{lempositiveker},
we can show that in the H-N model \eqref{scaledsystem},  if the initial
electric and magnetic fields are divergence free (i.e., $\nabla\cdot{\bm E}_0=\nabla\cdot{\bm H}_0=0$), then we have
$\nabla\cdot{\bm E}=\nabla\cdot{\bm H}=\nabla\cdot{\bm P}=0$ in $\Omega \times (0,T].$ Indeed,
taking the divergence of two equations in \eqref{rehnmodele}, we find readily that  $\nabla\cdot{\bm H}=0$  and
$\nabla\cdot(\epsilon_{\infty} {\bm E} +{\bm P})=0.$ Thus we derive from \eqref{PrabhakarOper3} and  \eqref{caputodef0} that
\begin{equation}\label{DivP}
\epsilon_{\infty}{^{C}\!\!\left(_0{\mathcal {D}}^{\alpha}_t+1\right)}^{\beta}\, \nabla\cdot{\bm P}=
\epsilon_{\infty} \int_{0}^{t}e_{\alpha,1-\alpha\beta}^{-\beta}(t-s;-1)\, \partial_s \nabla\cdot{\bm P}\, {\rm d}s =-{\Delta \epsilon}\, \nabla\cdot{\bm P}.
\end{equation}
Testing \eqref{DivP}  with $\pt\,\nabla\cdot{\bm P}$ leads to
\begin{equation}\label{DivP2}
\epsilon_{\infty}\big({^{C}\!\!\left(_0{\mathcal {D}}^{\alpha}_t+1\right)}^{\beta} \nabla\cdot{\bm P}(\cdot,t),\pt\nabla\cdot{\bm P}(\cdot,t)\big) +{\Delta \epsilon} \big(\nabla\cdot{\bm P}(\cdot,t),\pt\nabla\cdot{\bm P}(\cdot,t)\big)=0.
\end{equation}
Integrating it with respect to $t$ from $t=0$ to any $T>0,$  we deduce from  \eqref{DivP2} and   Lemma \ref{lempositiveker} that  the  first term of the resulting equation is nonnegative, and $\|\nabla\cdot{\bm P}(\cdot,T)\|\le 0.$ Therefore, we can claim  $\nabla\cdot{\bm P}=0,$ so we can further derive  $\epsilon_{\infty}\nabla\cdot {\bm E}=-\nabla\cdot {\bm P}=0.$  }
\end{rem}

With Lemma \ref{lempositiveker} at our disposal, we  can prove  the following stability result for  the H-N model \eqref{scaledsystem}.
\begin{thm}\label{thmConStab} If $\bm E_0, \bm H_0\in {\bm L}^2(\Omega)$ in \eqref{scaledsystem},   then its solution $\bm E, \bm H, \bm P\in L^\infty(0,T; {\bm L}^2(\Omega))$ satisfying
\begin{equation}\label{constabthm2}
{\mathscr E}(t):=\epsilon_{\infty}\|{\bm E}(\cdot, t) \|^2+\|{\bm H}(\cdot, t)\|^2
\leq \epsilon_{\infty}\|{\bm E}_0\|^2+\|{\bm H_0}\|^2:= {\mathscr E}_0,\quad \forall t\in (0,T),
\end{equation}
and
\begin{eqnarray}
& &\|{\bm P}\|_{L^{\infty}(0,T;{\bm L}^2(\Omega))}\leq {\Delta \epsilon}\, B~\|{\bm E}\|_{L^{\infty}(0,T;{\bm L}^2(\Omega))}, \label{constabthm3}
\end{eqnarray}
where the constant $B$ is given by
\begin{eqnarray*}
B=T^{\alpha\beta}\sum_{k=0}^{\infty}\frac{|(\beta)_k|}{\alpha( k+\beta)|\Gamma(\alpha k+\alpha\beta)|}\frac{T^{\alpha k}}{k!}.
\end{eqnarray*}
\end{thm}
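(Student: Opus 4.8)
The plan is to establish the energy bound \eqref{constabthm2} by a standard energy argument adapted to the nonlocal polarisation term, and then to read off \eqref{constabthm3} directly from the integral representation \eqref{rehnmodelp}. First I would take the ${\bm L}^2(\Omega)$ inner product of the first equation in \eqref{rehnmodele} with ${\bm E}$ and of the second with ${\bm H}$, add the two identities, and integrate by parts in the curl terms via Green's formula. Since $(\nabla\times{\bm H},{\bm E})-(\nabla\times{\bm E},{\bm H})$ collapses to the boundary integral of $({\bm n}\times{\bm E})\cdot{\bm H}$ over $\partial\Omega$, the PEC condition \eqref{rehnmodelb} makes it vanish, leaving the energy identity
\begin{equation*}
\frac12\frac{\rm d}{{\rm d}t}\,{\mathscr E}(t)+\big(\pt{\bm P}(\cdot,t),\,{\bm E}(\cdot,t)\big)=0,\qquad t\in(0,T).
\end{equation*}
Integrating from $0$ to $t$ and using ${\bm P}(\cdot,0)={\bm 0}$, together with ${\mathscr E}(0)={\mathscr E}_0$, the whole matter is then reduced to proving that the history contribution $\int_0^t\big(\pt[s]{\bm P}(\cdot,s),{\bm E}(\cdot,s)\big)\,{\rm d}s$ is nonnegative.

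The key observation for that step is that, because ${\bm P}(\cdot,0)={\bm 0}$, the Caputo--Prabhakar relation \eqref{caputodef0} in the form \eqref{PrabhakarOper3} (with $\varrho=1$) expresses ${\bm E}$ as a convolution of $\pt{\bm P}$ with the kernel $e_{\alpha,1-\alpha\beta}^{-\beta}(\cdot;-1)$, namely ${\Delta \epsilon}\,{\bm E}(\cdot,t)=\int_0^t e_{\alpha,1-\alpha\beta}^{-\beta}(t-s;-1)\,\pt[s]{\bm P}(\cdot,s)\,{\rm d}s$. Substituting this into the history term, applying Fubini's theorem, and splitting the ${\bm L}^2(\Omega)$ inner product into scalar components, one is reduced to the scalar quadratic form $\int_0^t\phi(\tau)\int_0^\tau e_{\alpha,1-\alpha\beta}^{-\beta}(\tau-s;-1)\phi(s)\,{\rm d}s\,{\rm d}\tau$ with $\phi=\pt[s]P_j(\bm x,\cdot)$ (for fixed $\bm x$ and each component $j$), which is $\ge0$ by the positive-definiteness of this kernel established in Lemma \ref{lempositiveker}. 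Hence ${\mathscr E}(t)\le{\mathscr E}_0$ for all $t\in(0,T)$, which is \eqref{constabthm2}; in particular ${\bm E},{\bm H}\in L^\infty(0,T;{\bm L}^2(\Omega))$ with $\|{\bm E}\|_{L^\infty(0,T;{\bm L}^2(\Omega))}^2\le{\mathscr E}_0/\epsilon_\infty$.

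For \eqref{constabthm3} I would take the ${\bm L}^2(\Omega)$ norm of \eqref{rehnmodelp} and pull the scalar kernel out of the integral, obtaining $\|{\bm P}(\cdot,t)\|\le{\Delta \epsilon}\,\|{\bm E}\|_{L^\infty(0,T;{\bm L}^2(\Omega))}\int_0^t\big|e_{\alpha,\alpha\beta}^{\beta}(\tau;-1)\big|\,{\rm d}\tau$. Expanding $e_{\alpha,\alpha\beta}^{\beta}(\tau;-1)=\tau^{\alpha\beta-1}E_{\alpha,\alpha\beta}^{\beta}(-\tau^\alpha)$ into its everywhere-convergent power series, bounding it termwise by $\sum_{k\ge0}\frac{|(\beta)_k|}{|\Gamma(\alpha k+\alpha\beta)|}\frac{\tau^{\alpha k+\alpha\beta-1}}{k!}$, and integrating term by term (legitimate by monotone convergence, the exponent $\alpha k+\alpha\beta-1>-1$ ensuring integrability at $\tau=0$) gives $\int_0^t|e_{\alpha,\alpha\beta}^{\beta}(\tau;-1)|\,{\rm d}\tau\le t^{\alpha\beta}\sum_{k\ge0}\frac{|(\beta)_k|}{\alpha(k+\beta)|\Gamma(\alpha k+\alpha\beta)|}\frac{t^{\alpha k}}{k!}$. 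Since the right-hand side is a power series in $t$ with nonnegative coefficients, it is nondecreasing in $t$ and thus bounded on $[0,T]$ by its value $B$ at $t=T$. This yields \eqref{constabthm3}, and in particular ${\bm P}\in L^\infty(0,T;{\bm L}^2(\Omega))$.

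The main obstacle is rigour in the nonnegativity step: rewriting ${\bm E}$ through $\pt{\bm P}$ via \eqref{PrabhakarOper3} presupposes that ${\bm P}$ is absolutely continuous in time, and the componentwise, pointwise-in-$\bm x$ application of Lemma \ref{lempositiveker} together with Fubini requires $\pt[s]{\bm P}(\bm x,\cdot)$ to be continuous (or at least to lie in a class for which the scalar positive-definiteness argument passes). I would deal with this either by first carrying out the estimates at the level of a Galerkin (or mollified) approximation, for which the requisite smoothness is automatic, and then passing to the limit, or by establishing a priori regularity of $({\bm E},{\bm H},{\bm P})$ sufficient to justify the convolution manipulations and then arguing by density; the energy identity and the bound \eqref{constabthm3} themselves are routine once that point is secured.
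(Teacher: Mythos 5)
Your proposal is correct and follows essentially the same route as the paper: the same energy identity, elimination of ${\bm E}$ via \eqref{caputodef0}--\eqref{PrabhakarOper3}, nonnegativity of the history term through the positive-definite kernel of Lemma \ref{lempositiveker}, and the same term-by-term integration of the series for $e_{\alpha,\alpha\beta}^{\beta}(\cdot;-1)$ to produce the constant $B$. Your two refinements---taking the ${\bm L}^2(\Omega)$ norm directly (Minkowski) instead of the paper's pointwise-in-${\bm x}$ supremum argument, and flagging the regularity/Galerkin justification needed to apply the scalar kernel lemma componentwise---are minor polish on the same proof, not a different approach.
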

\begin{proof} Multiplying the first equation in \eqref{rehnmodele} by ${\bm E}$, and integrating the resulted equation by  the Green's formula over $\Omega$, we  obtain that
\begin{eqnarray}
\label{constab1}
\epsilon_{\infty} \left(\partial_t{\bm E},{\bm E}\right)+\left(\partial_t {\bm P},{\bm E}\right)-\left({\bm H}, \nabla\times{\bm E}\right)=0,
\end{eqnarray}
where we  used  the boundary condition \eqref{rehnmodelb}. Similarly, we derive from the second equation  in \eqref{rehnmodele}  that
\begin{eqnarray}
\label{constab2}
\left(\partial_t {\bm H},{\bm H}\right)+\left(\nabla\times{\bm E},{\bm H}\right)=0.
\end{eqnarray}
As a direct consequence of \eqref{constab1}-\eqref{constab2}, we have
\begin{eqnarray*}
\epsilon_{\infty} \left(\partial_t{\bm E},{\bm E}\right)+\left(\partial_t {\bm H},{\bm H}\right)+\left(\partial_t {\bm P},{\bm E}\right)=0,\;\; {\rm i.e.,}\;\;\;  \frac {1} 2  {\mathscr E}'(t)  
=-\left(\partial_t {\bm P},{\bm E}\right).
\end{eqnarray*}
In view of \eqref{caputodef0}, we  eliminate $\bm E$ and integrate the resulted equation with respect to $t$
over $(0,T),$ which, together with \eqref{PrabhakarOper3} and Lemma \ref{lempositiveker},  leads to
\begin{equation*}
\begin{split}
  \frac {1} 2  {\mathscr E}(T)-  \frac {1} 2  {\mathscr E}_0 & =-\frac 1 {\Delta\epsilon}\int_0^T \big(
  \partial_t {\bm P},{^{C}\!\!\left(_0{\mathcal {D}}^{\alpha}_t+1\right)}^{\beta} {\bm P}\big)\, {\rm d} t\\
  &=
  -\frac 1 {\Delta\epsilon}\int_0^T\int_{0}^{t} e_{\alpha,1-\alpha\beta}^{-\beta}(t-s;-1)
   \big(\partial_t {\bm P},\partial_s {\bm P} \big) \,{\rm d}s\, {\rm d} t \le 0,
  \end{split}
\end{equation*}
where we recall that $\Delta\epsilon>0.$ This yields \eqref{constabthm2}.

We now turn to \eqref{constabthm3}. It is clear that  by \eqref{rehnmodelp},
\begin{equation*}
\begin{split}
|{\bm P}(\bm x, t)|
& \leq {\Delta \epsilon}\int_{0}^{t} \big|e_{\alpha,\alpha\beta}^{\beta}(t-s;-1){\bm E}(\bm x, s)\big|\,{\rm d}s
\leq {\Delta \epsilon}\sup_{0\leq s \leq t}|{\bm E}(\bm x, s)|\int_{0}^{t} \big|e_{\alpha,\alpha\beta}^{\beta}(t-s;-1)\big|\,{\rm d}s\\
&\leq {\Delta \epsilon}\sup_{0\leq s \leq t}|{\bm E}(\bm x, s)| \int_{0}^{T} \big|e_{\alpha,\alpha\beta}^{\beta}(u;-1)\big|\,{\rm d}u.
\end{split}
\end{equation*}
We derive from  \eqref{gml} and direct calculation that
\begin{equation*}
\begin{split}
& \int_{0}^{T} \big|e_{\alpha,\alpha\beta}^{\beta}(u;-1)\big|\,{\rm d}u
=\int_{0}^{T} u^{\alpha\beta-1}\big|E_{\alpha,\alpha\beta}^{\beta}(u;-1)\big|\,{\rm d}u
\leq \sum_{k=0}^{\infty}\frac{|(\beta)_k|}{\Gamma(\alpha k+\alpha\beta)}\frac{1}{k!}\int_{0}^{T}u^{\alpha k+\alpha\beta-1}\,{\rm d}u:=B,
\end{split}
\end{equation*}
where the quantity $B$ is finite  (cf.  \cite[Theorem 5]{kilbas2004generalized} for the estimates of generalised ML functions).
Thus,  we obtain  the first inequality but wish to show the second inequality below
\begin{eqnarray}\label{constab9}
|{\bm P}(\bm x, t)|^2 \leq ({\Delta \epsilon})^2 B^2\, \Big(\sup_{0\leq s \leq t}|{\bm E}(\bm x, s)|\Big)^2\le ({\Delta \epsilon})^2 B^2 \, \sup_{0\leq s \leq t}|{\bm E}(\bm x, s)|^2.
\end{eqnarray}
Let  $s_0 \in [0,t]$ satisfy
$$|{\bm E}(\bm x, s_0)|^2=\sup_{0\leq s \leq t} |{\bm E}(\bm x, s)|^2,\;\;  {\rm so}\;\; |{\bm E}(\bm x, s_0)| \geq |{\bm E}(\bm x, s)|,\;\;  \forall s \in [0,t],$$
which implies
$$|{\bm E}(\bm x, s_0)|^2 \geq \big(\sup_{0\leq s \leq t} |{\bm E}(\bm x, s)|\big)^2,$$
leading to the second inequality in \eqref{constab9}.
Therefore, we have
\begin{eqnarray*}
\sup_{0\leq s \leq t}|{\bm P}(\bm x,s)|^2 \leq ({\Delta \epsilon})^2B^2 \sup_{0\leq s \leq t}|{\bm E}(\bm x,s)|^2,\quad \forall t\in (0,T].
\end{eqnarray*}
Integrating this inequality over $\Omega$, leads to \eqref{constabthm3}.
\end{proof}

\begin{rem}\label{HNregularity} {\em It is known from  standard analysis that the $L^2$-stability in Theorem  \ref{thmConStab} can ensure the uniqueness of the solution of the H-N model \eqref{scaledsystem}.  In fact, we can follow the argument in  \cite[Theorem 3.8]{li2013time-domain} for the Drude model to show the existence of the solution.  Here, we sketch the idea for the readers' reference.  Let $\hat f(s)$ be the Laplace transform of  $f(t), t\ge 0$. 
 Then we can transform  \eqref{scaledsystem} into
\begin{equation}\label{Eequation}
 \epsilon_{\infty} (s{\hat {\bm E}}-{\bm E}_0)+s{\hat {\bm P}}= \nabla\times{\hat {\bm H}},\quad
 s{\hat {\bm H}}-{\bm H}_0= -\nabla\times{\hat {\bm E}},\quad
{\hat {\bm P}} = \frac{\Delta \epsilon}{(1+s^{\af})^{\bt}}{\hat {\bm E}}.
\end{equation}
Eliminating ${\hat {\bm H}}$ and ${\hat {\bm P}}$ from the first equation by other two equations, yields
\begin{eqnarray*}
\Big(\epsilon_{\infty}+\frac{\Delta \epsilon}{(1+s^{\af})^{\bt}}\Big)s^2\,{\hat {\bm E}} + \nabla\times\nabla\times{\hat {\bm E}}=\epsilon_{\infty}s\,{\bm E}_0+\nabla\times{\bm H}_0.
\end{eqnarray*}
A weak form is to find ${\hat {\bm E}} \in H_0({\rm curl},\Omega)$ such that
\begin{eqnarray*}
\Big(\epsilon_{\infty}+\frac{\Delta \epsilon}{(1+s^{\af})^{\bt}}\Big)s^2({\hat {\bm E}},{\bm \phi}) + (\nabla\times{\hat {\bm E}},\nabla\times {\bm \phi})=(\epsilon_{\infty}s{\bm E}_0+\nabla\times{\bm H}_0,{\bm \phi}), \quad \forall {\bm \phi} \in H_0({\rm curl},\Omega).
\end{eqnarray*}
 We infer from  the Lax-Milgram lemma that for any $s>0,$ it admits  a unique solution ${\hat {\bm E}} \in H_0({\rm curl},\Omega)$, provided that ${\bm E}_0, \nabla\times{\bm H}_0\in \bs L^2(\Omega)$ {\rm(}cf. \cite{cohen2017finite}{\rm).}  The inverse Laplace transform of  ${\hat {\bm E}}$ is  ${\bm E}$, and the uniqueness of ${\bm E}\in H_0({\rm curl},\Omega)$ follows from the uniqueness of the Laplace transform.  Then we have the regularity of $\bs P \in H_0({\rm curl},\Omega)$ from the last equation of \eqref{Eequation}.    The existence and uniqueness of ${\bm H} \in H({\rm curl},\Omega)$ can be assured by the same argument.}
\end{rem}

\section{Analysis of a semi-discrete time-discretisation scheme}\label{Sect3}
\setcounter{equation}{0}
\setcounter{lmm}{0}
\setcounter{thm}{0}

In this section, we propose a  time-discretisation scheme  for the H-N model \eqref{scaledsystem},
and conduct the stability and convergence analysis.

\subsection{Time discretisation} \label{subsect31}
We start with  a weak form of \eqref{scaledsystem}. Multiplying  three equations in \eqref{scaledsystem} by the respective test functions, integrating over $\Omega$ and using the boundary conditions, we  follow the framework in \cite[P. 18-19]{cohen2017finite}  and arrive at  the weak form, that is, to find ${\bm E} \in H_0({\rm curl},\Omega)$ and ${\bm H}, {\bm P}  \in {\bm L}^2(\Omega)$
such that
\begin{subequations}\label{31eqn}
\begin{align}
& \epsilon_{\infty} \left(\partial_t {\bm E}, {\bm \phi}\right)+\left(\partial_t {\bm P}, {\bm \phi}\right)-\left({\bm H}, \nabla\times{\bm \phi}\right)=0, \quad \quad \;\;\; \quad \forall {\bm \phi} \in H_0({\rm curl},\Omega),  \label{hnteweak1}
\\[4pt] &\left(\partial_t{\bm H}, {\bm \psi}\right)+\left(\nabla\times{\bm E}, {\bm \psi}\right)=0, \;\; \quad \qquad \qquad \qquad \qquad\quad ~~ \forall {\bm \psi} \in {\bm L}^2(\Omega),  \label{hnteweak2}
\\& \left({\bm P}, {\bm \varphi}\right)={\Delta \epsilon}\int_{0}^{t} e_{\alpha,\alpha\beta}^{\beta}(t-s;-1)\left({\bm E}(\cdot, s), {\bm \varphi}\right){\rm d}s,\quad\;\;\;  \forall {\bm \varphi} \in {\bm L}^2(\Omega). \label{hnteweak3}
\end{align}
\end{subequations}
Note that the result in Remark \ref{HNregularity} (based on the argument in \cite[Theorem 3.8]{li2013time-domain}) carries over to this problem with a suitable  regularity assumption on the initial fields $\bs E_0, \bs H_0$.

We partition the time interval $[0,T],$ and denote
$$ t_k=k{{\Delta t}},\quad k=0,1,\cdots,N_t,\quad \Delta t=T/{N_t};  \quad \delta_t u^k=\frac{u^k-u^{k-1}}{{{\Delta t}}},$$
where $u^k$ stands for the approximation of $u$ at time $t_k.$

We first consider the time discretisation  of  \eqref{rehnmodelp}, and employ  the piecewise constant approximation $I_{\Delta t} {\bm E}$ of $\bm E:$
\begin{equation}
\label{frarect}
\begin{split}
{\bm P}(\bm x, t_k)
& =  {\Delta \epsilon} \int_{0}^{t_k} e_{\alpha,\alpha\beta}^{\beta}(t_k-s;-1)\,I_{\Delta t} {\bm E}(\bm x, s)\,{\rm d}s+{\Delta \epsilon}\,{\bm R}^k_0(\bm x)\\
& =  {\Delta \epsilon} \sum_{j=1}^{k}\Big(\int_{t_{j-1}}^{t_j} e_{\alpha,\alpha\beta}^{\beta}(t_k-s;-1)\,{\rm d}s\Big){\bm E}(\bm x, t_j)+{\Delta \epsilon}\,{\bm R}^k_0(\bm x)\\
& = {\Delta \epsilon}\sum_{j=1}^{k}\varpi^{(\alpha,\beta)}_{k-j}{\bm E}(\bm x, t_j)+{\Delta \epsilon}\,{\bm R}^k_0(\bm x),  \quad k \geq 1,
\end{split}
\end{equation}
where the residual and the weights are given by
\begin{equation}\label{TruncErr0}
\begin{split}
 {\bm R}^k_0 (\bm x) & :=\int_{0}^{t_k} e_{\alpha,\alpha\beta}^{\beta}(t_k-s;-1)\,\left({\bm E}(\bm x, s)-I_{\Delta t} {\bm E}(\bm x, s)\right){\rm d}s\\
& =\sum_{j=1}^{k}\int_{t_{j-1}}^{t_j}\! e_{\alpha,\alpha\beta}^{\beta}(t_k-s;-1)\,\left({\bm E}(\bm x, s)-{\bm E}(\bm x, t_j)\right){\rm d}s,
\end{split}
\end{equation}
and
\begin{equation}\label{HNweights}
\varpi^{(\alpha,\beta)}_{k-j}:=\int_{t_{j-1}}^{t_j} e_{\alpha,\alpha\beta}^{\beta}\left(t_k-s;-1\right)\,{\rm d}s
= \int_{(k-j)\Delta t}^{(k-j+1)\Delta t} e_{\alpha,\alpha\beta}^{\beta}\left(s;-1\right)\,{\rm d}s,
\end{equation}
respectively. By \eqref{IntgML}, we can rewrite the weights as
\begin{equation}\label{fracrectw1}
\begin{split}
\varpi^{(\alpha,\beta)}_{k-j}
&=e_{\alpha,\alpha\beta+1}^{\beta}\big((k-j+1){\Delta t};-1\big)-e_{\alpha,\alpha\beta+1}^{\beta}\big((k-j){\Delta t};-1\big), \quad 1\leq j \leq k.
\end{split}
\end{equation}
Note  that  we can compute them accurately by using the codes in \cite{garrappa2015numerical} for the ML functions.

We have the following important property of the weights  in \eqref{fracrectw1}.
\begin{lmm}\label{lemdisker} For $0<\alpha<1, 0<\beta \leq 1,$ and $1\le k\le N_t,$  the weights
$\big\{\varpi^{(\alpha,\beta)}_{k-j}\big\}_{j=1}^k$ satisfy
\begin{equation*}
0\le \varpi^{(\alpha,\beta)}_{k-1}\le  \varpi^{(\alpha,\beta)}_{k-2} \le \cdots\le  \varpi^{(\alpha,\beta)}_{1}
\le \varpi^{(\alpha,\beta)}_{0}=({{\Delta t}})^{\alpha\beta}E_{\alpha,\alpha\beta+1}^{\beta}\big(\!-({\Delta t})^{\alpha}\big),
\end{equation*}
and  $\varpi^{(\alpha,\beta)}_{0}$ is finite.
\end{lmm}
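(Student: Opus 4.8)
The plan is to express the weights directly in terms of the antiderivative function $F(z) := e_{\alpha,\alpha\beta+1}^{\beta}(z;-1) = z^{\alpha\beta} E_{\alpha,\alpha\beta+1}^{\beta}(-z^\alpha)$ and then establish (i) nonnegativity of each $\varpi^{(\alpha,\beta)}_{k-j}$, (ii) the monotone ordering, and (iii) finiteness of $\varpi^{(\alpha,\beta)}_0$. For nonnegativity, I would observe from \eqref{HNweights} that $\varpi^{(\alpha,\beta)}_{k-j} = \int_{(k-j)\Delta t}^{(k-j+1)\Delta t} e_{\alpha,\alpha\beta}^{\beta}(s;-1)\,{\rm d}s$, so it suffices to show the integrand $e_{\alpha,\alpha\beta}^{\beta}(s;-1) = s^{\alpha\beta-1} E_{\alpha,\alpha\beta}^{\beta}(-s^\alpha)$ is nonnegative for $s>0$. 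This is where the positivity of the Prabhakar-type kernel enters: by the complete monotonicity of $E_{\alpha,\alpha\beta}^{\beta}(-s^\alpha)$ for $0<\alpha<1$, $0<\beta\le 1$ (a known property of the three-parameter ML function; alternatively one can read positivity off the Laplace-transform argument already used in Lemma \ref{lempositiveker}, since that kernel has parameters in the same admissible range), the integrand is positive, hence each weight is $\ge 0$.

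For the monotone ordering $\varpi^{(\alpha,\beta)}_{k-1}\le\cdots\le\varpi^{(\alpha,\beta)}_{0}$, I would rewrite each weight, using a change of variables, as an integral of $e_{\alpha,\alpha\beta}^{\beta}(\cdot;-1)$ over a window of fixed length $\Delta t$ translated progressively to the right:
\begin{equation*}
\varpi^{(\alpha,\beta)}_{m} = \int_{m\Delta t}^{(m+1)\Delta t} e_{\alpha,\alpha\beta}^{\beta}(s;-1)\,{\rm d}s,\qquad m=0,1,\dots,k-1.
\end{equation*}
Since the comparison $\varpi^{(\alpha,\beta)}_{m+1}\le\varpi^{(\alpha,\beta)}_{m}$ amounts to $\int_0^{\Delta t}\big[e_{\alpha,\alpha\beta}^{\beta}(s+m\Delta t;-1) - e_{\alpha,\alpha\beta}^{\beta}(s+(m+1)\Delta t;-1)\big]{\rm d}s \ge 0$, it is enough that $s\mapsto e_{\alpha,\alpha\beta}^{\beta}(s;-1)$ is nonincreasing on $(0,\infty)$ — indeed, complete monotonicity gives monotone decrease, so the ordering follows termwise. (If one prefers to avoid invoking complete monotonicity, monotone decrease of $s^{\alpha\beta-1}E_{\alpha,\alpha\beta}^{\beta}(-s^\alpha)$ can be checked by differentiating and using the series representation \eqref{gml} together with standard recurrences for $E_{\rho,\mu}^{\gamma}$; this is the step I expect to be the main technical obstacle, since it requires a careful sign analysis of an alternating series or an appeal to the Bernstein–Widder characterisation.)

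Finally, for the explicit value and finiteness of $\varpi^{(\alpha,\beta)}_0$: from \eqref{fracrectw1} with $k-j=0$ we get $\varpi^{(\alpha,\beta)}_0 = e_{\alpha,\alpha\beta+1}^{\beta}(\Delta t;-1) - e_{\alpha,\alpha\beta+1}^{\beta}(0;-1)$, and since $e_{\alpha,\alpha\beta+1}^{\beta}(0;-1)=0$ (because $\alpha\beta+1>0$, the prefactor $t^{\alpha\beta}$ vanishes at $t=0$), this equals $(\Delta t)^{\alpha\beta}E_{\alpha,\alpha\beta+1}^{\beta}\big(\!-(\Delta t)^\alpha\big)$, which is exactly the claimed formula. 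Finiteness is then immediate from Lemma \ref{lemBdker}, since $E_{\alpha,\alpha\beta+1}^{\beta}(-z)$ is bounded on the finite interval $[0,(\Delta t)^\alpha]$. Collecting the three pieces completes the proof.
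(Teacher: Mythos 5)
Your proposal is correct and follows essentially the same route as the paper: the paper also writes each weight as the integral of $e_{\alpha,\alpha\beta}^{\beta}(\cdot;-1)$ over a window of length $\Delta t$, invokes the complete monotonicity of this kernel (cited from \cite[(5.1.10)]{gorenflo2014mittag}, so the step you flag as the main obstacle is a known result rather than something to verify by hand) to get nonnegativity and monotone decay, and obtains the value and finiteness of $\varpi^{(\alpha,\beta)}_{0}$ from \eqref{fracrectw1} and Lemma \ref{lemBdker} exactly as you do. Only your side remark about reading pointwise positivity off Lemma \ref{lempositiveker} is shaky (that lemma concerns positive-definiteness of the different kernel $e_{\alpha,1-\alpha\beta}^{-\beta}$, which does not give pointwise positivity), but it is offered only as an alternative and does not affect the argument.
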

\begin{proof} Using  the integral mean-value theorem, we find from \eqref{HNweights} that
\begin{equation*}
\varpi^{(\alpha,\beta)}_{k-j}= \int_{t_{j-1}}^{t_j} e_{\alpha,\alpha\beta}^{\beta}\left(t_k-s;-1\right){\rm d}s={\Delta t}\, e_{\alpha,\alpha\beta}^{\beta}\left(t_k-\theta;-1\right), \quad \exists\,\theta \in (t_{j-1},t_j),\quad 1\le j\le k.
\end{equation*}
It is nonnegative and decreasing, since $e_{\alpha,\alpha\beta}^{\beta}\left(z;-1\right)$ is completely monotonic for $0<\alpha<1, 0<\beta \leq 1,$ and $z>0$ (see \eqref{CMML}). Therefore, we have the monotonicity of the discrete kernels and $\varpi^{(\alpha,\beta)}_{k-1} \geq 0~(1\leq k \leq N_t)$.
By virtue of \eqref{fracrectw1}, we have  $\varpi^{(\alpha,\beta)}_{0}=({\Delta t})^{\alpha\beta}E_{\alpha,\alpha\beta+1}^{\beta}(-(\Delta t)^{\alpha}),$ which is finite due to \eqref{BoundML}.
\end{proof}

\begin{rem} {\em In what follows, we shall not consider  the D-C model {\rm(}i.e., $\alpha=1${\rm)}.  In fact,  the computational codes for the ML functions in \cite{garrappa2015numerical}  excludes the case with $\alpha=1$.
In fact, the D-C model can be solved  by a very different method which we plan to report  in a separate future work.}
\end{rem}

Now we present the semi-discrete time-discretisation  scheme for the H-N model \eqref{scaledsystem}:
find ${\bm E}^k \in H_0({\rm curl},\Omega)$ and ${\bm H}^k, {\bm P}^k \in {\bm L}^2(\Omega)$  such that
\begin{subequations}\label{39eqn}
\begin{align}
& \epsilon_{\infty} \left(\delta_t {\bm E}^k, {\bm \phi}\right)+\left(\delta_t {\bm P}^k, {\bm \phi}\right)-\left({\bm H}^k, \nabla\times{\bm \phi}\right)=0, \quad  \forall {\bm \phi} \in H_0({\rm curl},\Omega),  \label{hntefd1} \\[4pt]
& \left(\delta_t {\bm H}^k, {\bm \psi}\right)+\left(\nabla\times{\bm E}^k, {\bm \psi}\right)=0, ~~~\quad \qquad \qquad \qquad \qquad~~ \forall {\bm \psi} \in {\bm L}^2(\Omega),  \label{hntefd2}\\
& \left({\bm P}^k, {\bm \varphi}\right)={\Delta \epsilon}\sum_{j=1}^{k}\varpi^{(\alpha,\beta)}_{k-j}\left({\bm E}^j,{\bm \varphi}\right), \qquad \qquad \qquad\;\; ~~ \forall {\bm \varphi} \in {\bm L}^2(\Omega), \label{hntefd3}
\end{align}
\end{subequations}
for $k=1,2,\cdots, N_t$, where ${\bm E}^0={\bm E}_0({\bm x})$, ${\bm H}^0={\bm H}_0({\bm x})$ and ${\bm P}^0=\bm 0$.
\begin{rem}\label{TDisDivFree} {\em Following the derivation in Remark \ref{HNdivergence}, we can show $\nabla\times{\bm H^k}=0$ and
\begin{equation}\label{DisElecFlux}
\nabla\times(\epsilon_{\infty}{\bm E}^k+{\bm P}^k)=0,
\end{equation}
provided that the initial fields ${\bm E}^0$ and ${\bm H^0}$ are divergence free. With this, we can recursively prove that $\nabla\times{\bm E}^k=0,$ so is ${\bm P}^k.$ Indeed,  when $k=1$, substituting \eqref{hntefd3} into \eqref{DisElecFlux} and using Lemma \ref{lempositiveker}, we find $\nabla\times{\bm E}^1=\nabla\times{\bm P}^1=0$. Similarly, we are able to show the result for   $k=2$. Recursively, we deduce this property  for general $k\ge 3.$}
\end{rem}

\subsection{Stability  and discrete energy dissipation} \label{subsect32}
In the convergence analysis,  it is necessary to consider a more general setting:
\begin{subequations}\label{39eqn2}
\begin{align}
& \epsilon_{\infty}\! \left(\delta_t {\bm E}^k, {\bm \phi}\right)+\left(\delta_t {\bm P}^k, {\bm \phi}\right)-\left({\bm H}^k, \nabla\times{\bm \phi}\right)= ({\bm f} ^k,{\bm \phi}), \quad  \forall {\bm \phi} \in H_0({\rm curl},\Omega),  \label{zeng-hntefd1} \\[4pt]
& \left(\delta_t {\bm H}^k, {\bm \psi}\right)+\left(\nabla\times{\bm E}^k, {\bm \psi}\right)=
({\bm g} ^k, {\bm \psi}), ~~\;\qquad\;\;  \qquad \qquad\qquad~~~ \forall {\bm \psi} \in {\bm L}^2(\Omega),  \label{zeng-hntefd2}\\
& \left({\bm P}^k, {\bm \varphi}\right)={\Delta \epsilon}\sum_{j=1}^{k}\varpi^{(\alpha,\beta)}_{k-j}\left({\bm E}^j,{\bm \varphi}\right)
+ {\Delta \epsilon}\,({\bm h} ^k,{\bm \varphi}), \quad  \quad \quad\;    \forall {\bm \varphi} \in {\bm L}^2(\Omega),  \label{zeng-hntefd3}
\end{align}
\end{subequations}
where  ${\bm E}^0={\bm E}_0({\bm x})$, ${\bm H}^0={\bm H}_0({\bm x})$ and ${\bm P}^0=\bs 0$. We shall see  from  the error equations \eqref{325eqn}-\eqref{RkdefnA} for convergence analysis that  these non-homogeneous data will correspond to the
time-discretisation errors of the fields.
\begin{thm}\label{zeng-thmSemiDisStab}
Let  ${\bm E}^k, {\bm P}^k, {\bm H}^k$ be the solutions of \eqref{39eqn}  or \eqref{39eqn2}, and define
\begin{equation}\label{zeng-defnEk}
 \mathscr E^k:=\epsilon_{\infty}\|{\bm E}^{k}\|^2+ \|{\bm H}^{k}\|^2+{\Delta \epsilon}\sum_{j=1}^{k}\varpi^{(\alpha,\beta)}_{k-j}\,\|{\bm E}^j\|^2,\;\; k\ge 1;\;\;\; \mathscr E^0:=\epsilon_{\infty}\|{\bm E}^{0}\|^2+ \|{\bm H}^{0}\|^2.
\end{equation}
Then the scheme \eqref{39eqn}  is unconditionally stable in the sense that  for all $\Delta t>0,$
\begin{equation}\label{defnEk}
\mathscr E^k \leq  \mathscr E^{k-1} \le\cdots\le  \mathscr E^1 \leq  \mathscr E^{0}.
\end{equation}
For the scheme \eqref{39eqn2} with nonhomogeneous data,  if $ \varrho \Delta t < 1$ for some  given positive constant $\varrho>0,$  and
 \begin{equation}\label{fghcondition}
\bm Q^k:= {\Delta t}\sum_{j=1}^k\big(\|{\bm f} ^j\|^2
+   (\Delta \epsilon)^2 \|\delta_t {\bm h} ^j \|^2+ \|{\bm g} ^j\|^2\big)<\infty,
 \end{equation}
 then we have
\begin{equation}\label{zeng-disstabthm1}
\mathscr E^k
\le   \frac{1}{1-\varrho \Delta t} {\rm exp}\Big( \frac{\varrho\, t_{k-1}}{1-\varrho \Delta t}\Big)\,\Big(\mathscr E^{0} +  \frac{1}{\varrho} \bm Q^k \Big),\quad k\ge 1.
\end{equation}
\end{thm}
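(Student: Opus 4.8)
The plan is to run a discrete energy argument that mirrors the continuous one in Theorem \ref{thmConStab}, but with the summation-by-parts structure of the convolution weights playing the role of the positive-definiteness of the Prabhakar kernel. First I would take $\bm\phi=\bm E^k$ in \eqref{hntefd1} (resp. \eqref{zeng-hntefd1}), $\bm\psi=\bm H^k$ in \eqref{hntefd2} (resp. \eqref{zeng-hntefd2}), and multiply by $\Delta t$; adding the two equations, the curl terms cancel exactly as in \eqref{constab1}--\eqref{constab2}, leaving $\epsilon_\infty(\bm E^k-\bm E^{k-1},\bm E^k)+(\bm P^k-\bm P^{k-1},\bm E^k)=\Delta t(\bm f^k,\bm E^k)+\Delta t(\bm g^k,\bm H^k)$. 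I would then use the elementary identity $2(a-b,a)=\|a\|^2-\|b\|^2+\|a-b\|^2$ to convert the $\epsilon_\infty$- and $\bm H$-terms into differences of the first two pieces of $\mathscr E^k$ (discarding the nonnegative $\|\bm E^k-\bm E^{k-1}\|^2$ and $\|\bm H^k-\bm H^{k-1}\|^2$ terms, which is where the ``decay'' rather than ``conservation'' comes from).

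The crux is the polarisation term $(\bm P^k-\bm P^{k-1},\bm E^k)$. Substituting \eqref{hntefd3} (and its $k-1$ analogue), $\bm P^k-\bm P^{k-1}=\Delta\epsilon\sum_{j=1}^k\varpi^{(\alpha,\beta)}_{k-j}\bm E^j-\Delta\epsilon\sum_{j=1}^{k-1}\varpi^{(\alpha,\beta)}_{k-1-j}\bm E^j$, so $\frac{1}{\Delta\epsilon}(\bm P^k-\bm P^{k-1},\bm E^k)=\sum_{j=1}^k\varpi^{(\alpha,\beta)}_{k-j}(\bm E^j,\bm E^k)-\sum_{j=1}^{k-1}\varpi^{(\alpha,\beta)}_{k-1-j}(\bm E^j,\bm E^k)$. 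I would like to bound this below by $\tfrac12\big(\sum_{j=1}^k\varpi^{(\alpha,\beta)}_{k-j}\|\bm E^j\|^2-\sum_{j=1}^{k-1}\varpi^{(\alpha,\beta)}_{k-1-j}\|\bm E^j\|^2\big)$, i.e. by $\tfrac12$ of the increment of the history part of $\mathscr E^k$. Using $2(\bm E^j,\bm E^k)\ge \|\bm E^k\|^2+\|\bm E^j\|^2-\|\bm E^j-\bm E^k\|^2$ is too lossy; instead the standard trick (cf. the positive-definiteness of nonincreasing convex sequences, as in the analysis of fractional schemes) is to write the quadratic form $\sum_{j,k}$ with a nonincreasing nonnegative kernel $\{\varpi^{(\alpha,\beta)}_{m}\}$ — guaranteed by Lemma \ref{lemdisker} — as a nonnegative combination of squares $\big\|\sum \bm E^j\big\|^2$ via Abel summation, which yields exactly the telescoping lower bound above. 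Summing the resulting inequality $\tfrac12(\mathscr E^k-\mathscr E^{k-1})\le \Delta t(\bm f^k,\bm E^k)+\Delta t(\bm g^k,\bm H^k)-(\text{nonneg})$ over $k$ and dropping the nonnegative terms gives \eqref{defnEk} in the homogeneous case ($\bm f^k=\bm g^k=0$, and then also $\bm h^k$ absent).

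For the nonhomogeneous estimate \eqref{zeng-disstabthm1} I would additionally have to handle the term coming from $\bm h^k$: $(\bm P^k-\bm P^{k-1},\bm E^k)$ now picks up an extra $\Delta\epsilon(\bm h^k-\bm h^{k-1},\bm E^k)=\Delta\epsilon\,\Delta t(\delta_t\bm h^k,\bm E^k)$. All three forcing contributions are then estimated by Cauchy--Schwarz and Young's inequality, e.g. $\Delta t(\bm f^k,\bm E^k)\le \tfrac{\Delta t}{2\varrho}\|\bm f^k\|^2+\tfrac{\varrho\Delta t}{2}\|\bm E^k\|^2$, so that after summing over $k$ one obtains $\mathscr E^k\le \mathscr E^0+\tfrac{1}{\varrho}\bm Q^k+\varrho\,\Delta t\sum_{\ell=1}^k\mathscr E^\ell$ (the factor $\epsilon_\infty\ge 1$ lets one absorb $\|\bm E^\ell\|^2,\|\bm H^\ell\|^2\le \mathscr E^\ell$). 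Isolating the $\ell=k$ term uses $\varrho\Delta t<1$ to divide by $1-\varrho\Delta t$, and then the discrete Grönwall inequality with the remaining sum $\sum_{\ell=1}^{k-1}\mathscr E^\ell$ produces the factor $\tfrac{1}{1-\varrho\Delta t}\exp\!\big(\tfrac{\varrho t_{k-1}}{1-\varrho\Delta t}\big)$, which is \eqref{zeng-disstabthm1}. The main obstacle is the second step: establishing the sharp telescoping lower bound for the convolution quadratic form using only monotonicity and nonnegativity of the weights from Lemma \ref{lemdisker}; everything else is routine energy bookkeeping and discrete Grönwall.
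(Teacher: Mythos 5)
Your overall plan coincides with the paper's proof: test \eqref{zeng-hntefd1}--\eqref{zeng-hntefd2} with $\Delta t\,{\bm E}^k$ and $\Delta t\,{\bm H}^k$ (your first display accidentally drops the $({\bm H}^k-{\bm H}^{k-1},{\bm H}^k)$ term, clearly a slip), eliminate ${\bm P}^k-{\bm P}^{k-1}$ through \eqref{zeng-hntefd3}, use nonnegativity and monotonicity of the weights from Lemma \ref{lemdisker}, estimate the forcings (including $\Delta\epsilon\,\Delta t(\delta_t{\bm h}^k,{\bm E}^k)$) by Young's inequality with parameter $\varrho$, absorb $\varrho\Delta t(\|{\bm E}^k\|^2+\|{\bm H}^k\|^2)\le \varrho\Delta t\,\mathscr E^k$ using $\epsilon_\infty\ge 1$ and $\varrho\Delta t<1$, and conclude with the discrete Gr\"onwall inequality. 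The one place your assessment is off is the ``crux''. The telescoping lower bound
\begin{equation*}
\sum_{j=1}^{k}\varpi^{(\alpha,\beta)}_{k-j}({\bm E}^j,{\bm E}^k)-\sum_{j=1}^{k-1}\varpi^{(\alpha,\beta)}_{k-1-j}({\bm E}^j,{\bm E}^k)\;\ge\;\frac12\Big(\sum_{j=1}^{k}\varpi^{(\alpha,\beta)}_{k-j}\|{\bm E}^j\|^2-\sum_{j=1}^{k-1}\varpi^{(\alpha,\beta)}_{k-1-j}\|{\bm E}^j\|^2\Big)
\end{equation*}
needs no Abel-summation or sum-of-squares machinery: write the left side as $\varpi^{(\alpha,\beta)}_{0}\|{\bm E}^k\|^2-\sum_{j=1}^{k-1}\big(\varpi^{(\alpha,\beta)}_{k-1-j}-\varpi^{(\alpha,\beta)}_{k-j}\big)({\bm E}^j,{\bm E}^k)$ and apply precisely the Cauchy--Schwarz/Young estimate you dismiss as ``too lossy'' to each cross term (the coefficients are $\ge 0$ by Lemma \ref{lemdisker}); the $\|{\bm E}^k\|^2$ contributions telescope since $\sum_{j=1}^{k-1}\big(\varpi^{(\alpha,\beta)}_{k-1-j}-\varpi^{(\alpha,\beta)}_{k-j}\big)=\varpi^{(\alpha,\beta)}_{0}-\varpi^{(\alpha,\beta)}_{k-1}$, and the slack is exactly $\tfrac12\varpi^{(\alpha,\beta)}_{k-1}\|{\bm E}^k\|^2\ge 0$. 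This is what the paper does in \eqref{disstab3}--\eqref{omegak0}; the estimate is not lossy precisely because $\mathscr E^k$ carries the full history sum. By contrast, the route you propose in its place --- a nonnegative sum-of-squares decomposition of the convolution quadratic form from monotonicity and nonnegativity alone --- is not justified as stated (positive-definiteness of such forms typically also requires convexity of the weight sequence, which is not available here), and it is a statement about the summed double form rather than the per-step bound you actually need. So the step you single out as the main obstacle is no obstacle once the elementary estimate is used; with that repair your argument is the paper's argument.
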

\begin{proof} We first prove \eqref{zeng-disstabthm1}, and then \eqref{defnEk} follows straightforwardly.

\smallskip
Taking  ${\bm \phi}={{\Delta t}} {\bm E}^k$ in \eqref{zeng-hntefd1} and $\bm\psi={{\Delta t}} {\bm H}^k$ in \eqref{zeng-hntefd2}, respectively,  and  adding two resulted equations together, we  obtain
\begin{equation}
\label{zeng-disstab1}
\begin{split}
& \epsilon_{\infty} \left({\bm E}^k-{\bm E}^{k-1},{\bm E}^k\right)+\left({\bm H}^k-{\bm H}^{k-1},{\bm H}^k\right)+\left({\bm P}^k-{\bm P}^{k-1}, {\bm E}^k\right)\\
&\qquad \qquad ={\Delta t}({\bm f} ^k,{\bm E}^k) + {\Delta t}({\bm g} ^k,{\bm H}^k).
\end{split}
\end{equation}
We eliminate $\bm P$ from the above identity  by using \eqref{zeng-hntefd3}
with ${\bm \varphi}=\bm E^k$, so we can  rewrite \eqref{zeng-disstab1}  as
\begin{equation}\label{disstab2}
\begin{split}
 \epsilon_{\infty} \left({\bm E}^k,{\bm E}^k\right)  & +  \left({\bm H}^k,{\bm H}^k\right)  +{\Delta \epsilon}\sum_{j=1}^{k}\varpi^{(\alpha,\beta)}_{k-j}\left({\bm E}^j, {\bm E}^k\right)  \\
 &= \epsilon_{\infty}\left({\bm E}^{k-1},{\bm E}^k\right)+\left({\bm H}^{k-1},{\bm H}^k\right)+{\Delta \epsilon}\sum_{j=1}^{k-1}\varpi^{(\alpha,\beta)}_{k-1-j}\left({\bm E}^j, {\bm E}^k\right) \\[2pt]
&\quad +
{\Delta t}\,({\bm f} ^k  + {\Delta \epsilon} \delta_t  {\bm h} ^k ,{\bm E}^k) + {\Delta t}\,({\bm g} ^k,{\bm H}^k).
\end{split}
\end{equation}
Rearranging \eqref{disstab2}  yields
\begin{equation} \label{disstab3}
\begin{split}
\big(\epsilon_{\infty} &+{\Delta \epsilon}\, \varpi^{(\alpha,\beta)}_{0}\big) \|{\bm E}^k\|^2 + \|{\bm H}^k\|^2\\
& =\epsilon_{\infty}\left({\bm E}^{k-1},{\bm E}^k\right)+\left({\bm H}^{k-1},{\bm H}^k\right) +{\Delta \epsilon}\sum_{j=1}^{k-1}\big(\varpi^{(\alpha,\beta)}_{k-1-j}
-\varpi^{(\alpha,\beta)}_{k-j}\big)\left({\bm E}^j, {\bm E}^k\right)\\
&\quad  +{\Delta t}\,({\bm f} ^k + {\Delta \epsilon} \delta_t  {\bm h} ^k ,{\bm E}^k)
+ {\Delta t}\,({\bm g} ^k,{\bm H}^k) .
\end{split}
\end{equation}
For $k\ge 2$, using  the Cauchy--Schwarz inequality and Lemma \ref{lemdisker},  we obtain
\begin{equation}\label{zeng-kstep-0}
\begin{split}
 \big(\epsilon_{\infty}+{\Delta \epsilon}\,\varpi^{(\alpha,\beta)}_{0}\big)\|{\bm E}^{k}\|^2
  + \|{\bm H}^{k}\|^2 & \leq \frac{\epsilon_{\infty}} 2 (\|{\bm E}^k\|^2+ \|{\bm E}^{k-1}\|^2)+\frac{1}{2}(\|{\bm H}^k\|^2+ \|{\bm H}^{k-1}\|^2)  \\
&    + \frac{\Delta \epsilon} 2 \sum_{j=1}^{k-1}\big(\varpi^{(\alpha,\beta)}_{k-1-j}-\varpi^{(\alpha,\beta)}_{k-j}\big)
\big(\|{\bm E}^k\|^2+\|{\bm E}^j\|^2\big)\\
&+ \frac{ \varrho \Delta t}{2 } (\|{\bm E}^k\|^2 + \|{\bm H}^k\|^2)+ \frac{ {\Delta t}}{2 \varrho}\big(\|{\bm f} ^k + {\Delta \epsilon} \delta_t  {\bm h} ^k \|^2
+ \|{\bm g} ^k\|^2\big),
\end{split}
\end{equation}
where $\varrho>0$ is a constant independent of ${\Delta t}$ and $k$. It is evident that
\begin{equation}\label{omegak0}
 \sum_{j=1}^{k-1}\big(\varpi^{(\alpha,\beta)}_{k-1-j}-\varpi^{(\alpha,\beta)}_{k-j}\big)
\|{\bm E}^k\|^2= \big(\varpi^{(\alpha,\beta)}_{0}- \varpi^{(\alpha,\beta)}_{k-1}\big) \|{\bm E}^k\|^2.
\end{equation}
Consequently, by \eqref{zeng-defnEk}, we find from \eqref{zeng-kstep-0}-\eqref{omegak0} immediately that
\begin{equation}\label{zeng-kstep-010}
\begin{split}
 {\mathscr E^k} -  \varrho {\Delta t}(\|{\bm E}^k\|^2 + \|{\bm H}^k\|^2) & \leq {\mathscr E^{k-1}}
+ \frac {\Delta t} {\varrho} (\|{\bm f} ^k + {\Delta \epsilon} \delta_t  {\bm h} ^k \|^2
+ \|{\bm g} ^k\|^2),
\end{split}
\end{equation}
which implies
\begin{equation}\label{zeng-kstepB}
(1- \varrho {\Delta t})\mathscr E^k
\le \mathscr E^{k-1} + \frac{\Delta t}{\varrho}  \big(\|{\bm f} ^k \|^2
+ (\Delta \epsilon)^2 \|\delta_t {\bm h} ^k \|^2+ \|{\bm g} ^k\|^2\big).
\end{equation}
In fact, \eqref{zeng-kstepB} also holds for $k=1.$ Indeed,    by \eqref{zeng-disstab1}  with $k=1$,
and understanding the summation $\sum_{n=1}^0=0$ in  \eqref{disstab2}-\eqref{omegak0},
we can get \eqref{zeng-kstepB} with $k=1$ readily.

For clarity, we set $k=j$ and rewrite \eqref{zeng-kstepB} as
\begin{equation}\label{zeng-kstep}
(1-\varrho {\Delta t})(\mathscr E^j- \mathscr E^{j-1})
\le  {\varrho}  {\Delta t} \, \mathscr E^{j-1} +\frac{\Delta t}{\varrho} \big(\|{\bm f} ^j \|^2
+  (\Delta \epsilon)^2 \|\delta_t {\bm h}^j \|^2+ \|{\bm g}^j\|^2\big),\quad j\ge 1.
\end{equation}
Summing it up for $1\le j\le k,$ we have that if $\varrho {\Delta t}<1,$
\begin{equation}\label{zeng-kstep-20}
\begin{split}
\mathscr E^k
\le \frac 1 {1-\varrho {\Delta t}}\bigg\{\varrho {\Delta t} \sum_{j=1}^{k-1} \mathscr E^j +
\mathscr E^{0} + \frac{\Delta t}{\varrho}\sum_{j=1}^k\big(\|{\bm f} ^j \|^2
+  (\Delta \epsilon)^2 \|\delta_t {\bm h} ^j \|^2+ \|{\bm g} ^j\|^2\big)\bigg\}.
\end{split}
\end{equation}
Using the discrete Gr\"{o}nwall's inequality (see, e.g., \cite[Lemma 1.4.2]{Quarteroni1994}), we derive
\eqref{zeng-disstabthm1}  directly.
%
%

We now turn to \eqref{39eqn}.  In this case, we have $\bs f^k= \bs g^k=\bs h^k=\bs 0 $ in  \eqref{zeng-disstab1}. Following the same lines as in the above derivations, we find readily that \eqref{zeng-kstep-010} becomes  $\mathscr E^k \leq  \mathscr E^{k-1},$ so the decay of the discrete energy  in \eqref{defnEk} holds. The proof is completed.
\end{proof}
\begin{rem}\label{exp-const} {\em We can represent the constant in the bound  \eqref{zeng-disstabthm1} more explicitly so that it does not depend on $\Delta t.$ For example,  we take $\varrho=1$ and assume that $1-\Delta t\ge c_*>0,$ i.e.,
$\Delta t<1- c^*.$ Then
\begin{equation}\label{zeng-disstabthm100}
\mathscr E^k
\le   \frac{1}{c_*} {\rm exp}\Big( \frac{t_{k-1}}{c_*}\Big)\,\big(\mathscr E^{0} +   \bm Q^k \big),\quad k\ge 1.
\end{equation}
  }
\end{rem}

With the aid of Theorem \ref{zeng-thmSemiDisStab}, we can further derive the following bound for ${\bm P}^k$ in \eqref{hntefd3}.
\begin{corollary}\label{corSemiDisStab}
Let  ${\bm E}^k, {\bm P}^k, {\bm H}^k$ be the solution of \eqref{39eqn}. Then we have  
\begin{equation}\label{disstabthm3}
\|{\bm P}^k\| \leq \big({\Delta \epsilon}\, e_{\alpha,\alpha\beta+1}^{\beta}(t_k;-1)\big)\,
\max_{1\leq j \leq k}\|{\bm E}^j\|\le \big({\Delta \epsilon}\, e_{\alpha,\alpha\beta+1}^{\beta}(t_k;-1)\big)\sqrt{{\mathscr E}^0},
\end{equation}
for $k\ge 1,$ where $e_{\alpha,\alpha\beta+1}^{\beta}(t_k;-1)$ defined in  \eqref{gml}  is finite for $0<t_k\le T.$
\end{corollary}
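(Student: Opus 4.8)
The plan is to read off $\bm P^k$ explicitly from the discrete polarisation relation and then estimate it by the triangle inequality, the only genuine input being a telescoping identity for the convolution weights. Since \eqref{hntefd3} holds for every test function $\bm\varphi\in\bm L^2(\Omega)$, it is equivalent to the $\bm L^2$-identity $\bm P^k={\Delta\epsilon}\sum_{j=1}^k\varpi^{(\alpha,\beta)}_{k-j}\bm E^j$. Using that all the weights are nonnegative by Lemma \ref{lemdisker}, the triangle inequality gives
\[
\|\bm P^k\|\le {\Delta\epsilon}\sum_{j=1}^k\varpi^{(\alpha,\beta)}_{k-j}\|\bm E^j\|\le {\Delta\epsilon}\Big(\sum_{j=1}^k\varpi^{(\alpha,\beta)}_{k-j}\Big)\max_{1\le j\le k}\|\bm E^j\|.
\]

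Next I would evaluate the weight sum. Invoking the representation \eqref{fracrectw1}, each $\varpi^{(\alpha,\beta)}_{k-j}$ is a difference $e_{\alpha,\alpha\beta+1}^{\beta}((k-j+1){\Delta t};-1)-e_{\alpha,\alpha\beta+1}^{\beta}((k-j){\Delta t};-1)$, so after re-indexing by $m=k-j$ the sum telescopes to $e_{\alpha,\alpha\beta+1}^{\beta}(t_k;-1)-e_{\alpha,\alpha\beta+1}^{\beta}(0;-1)$. Because $\alpha\beta>0$, the definition \eqref{gml} shows $e_{\alpha,\alpha\beta+1}^{\beta}(0;-1)=0$ (the prefactor $t^{\alpha\beta}$ vanishes at $t=0$ while the ML value there is finite), hence $\sum_{j=1}^k\varpi^{(\alpha,\beta)}_{k-j}=e_{\alpha,\alpha\beta+1}^{\beta}(t_k;-1)$; equivalently, this follows from $\sum_{j=1}^k\varpi^{(\alpha,\beta)}_{k-j}=\int_0^{t_k}e_{\alpha,\alpha\beta}^{\beta}(s;-1)\,{\rm d}s$ (by \eqref{HNweights}) together with the integration formula \eqref{IntgML}. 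Substituting into the previous display yields the first inequality of \eqref{disstabthm3}.

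For the second inequality I would invoke Theorem \ref{zeng-thmSemiDisStab}: for the homogeneous scheme \eqref{39eqn} the discrete energy is nonincreasing, so $\mathscr E^j\le\mathscr E^0$ for all $j\ge1$; and from the definition \eqref{zeng-defnEk}, together with the nonnegativity of the weights and $\epsilon_\infty\ge1$, one has $\|\bm E^j\|^2\le\epsilon_\infty\|\bm E^j\|^2\le\mathscr E^j$. Therefore $\max_{1\le j\le k}\|\bm E^j\|\le\sqrt{\mathscr E^0}$, which combined with the first inequality gives \eqref{disstabthm3}. Finiteness of $e_{\alpha,\alpha\beta+1}^{\beta}(t_k;-1)=t_k^{\alpha\beta}E_{\alpha,\alpha\beta+1}^{\beta}(-t_k^{\alpha})$ for $0<t_k\le T$ is immediate from Lemma \ref{lemBdker}. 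There is no substantial obstacle here; the only step meriting care is the telescoping of the weights and the observation that $e_{\alpha,\alpha\beta+1}^{\beta}$ vanishes at the origin, which together collapse the sum of weights into a single ML value.
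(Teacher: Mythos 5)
Your proposal is correct and follows essentially the same route as the paper: the paper tests \eqref{hntefd3} with $\bm\varphi=\bm P^k$ and applies Cauchy--Schwarz, while you identify $\bm P^k$ in $\bm L^2$ and use the triangle inequality, but both yield the same bound $\|\bm P^k\|\le \Delta\epsilon\sum_{j=1}^k\varpi^{(\alpha,\beta)}_{k-j}\|\bm E^j\|$, followed by the same telescoping of the weights via \eqref{fracrectw1} (with $e^{\beta}_{\alpha,\alpha\beta+1}(0;-1)=0$) and the appeal to \eqref{defnEk} with $\epsilon_\infty\ge 1$ for the second inequality. No gaps.
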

\begin{proof}
From \eqref{hntefd3}, we obtain
\begin{equation*}
\|{\bm P}^k\|^2=\left({\bm P}^k, {\bm P}^k\right)={\Delta \epsilon}\sum_{j=1}^{k}\varpi^{(\alpha,\beta)}_{k-j}\left({\bm E}^j,{\bm P}^k\right)\leq {\Delta \epsilon}\sum_{j=1}^{k}\varpi^{(\alpha,\beta)}_{k-j}\,\|{\bm E}^j\|\,\|{\bm P}^k\|. 
\end{equation*}
Thus, one has
\begin{equation}
\label{disstab10}
\begin{split}
\|{\bm P}^k\|
& \leq {\Delta \epsilon}\sum_{j=1}^{k}\varpi^{(\alpha,\beta)}_{k-j}\,\|{\bm E}^j\|
\leq {\Delta \epsilon}\,\Big(\max_{1\leq j \leq k}\|{\bm E}^j\|\Big)\Big(\sum_{j=1}^{k}\varpi^{(\alpha,\beta)}_{k-j}\Big)\\
&\leq {\Delta \epsilon}\,t_k^{\alpha\beta}E_{\alpha,\alpha\beta+1}^{\beta}(-t_k^{\alpha})
\,\Big(\max_{1\leq j \leq k}\|{\bm E}^j\|\Big),
\end{split}
\end{equation}
where we used  \eqref{fracrectw1} and \eqref{BoundML} to arrive at
\begin{equation}\label{bndS}
\begin{split}
\sum_{j=1}^{k}\varpi^{(\alpha,\beta)}_{k-j}
& = e_{\alpha,\alpha\beta+1}^{\beta}\left(t_k;-1\right)-
e_{\alpha,\alpha\beta+1}^{\beta}\left(0;-1\right)
 =  t_k^{\alpha\beta}E_{\alpha,\alpha\beta+1}^{\beta}(-t_k^{\alpha})= e_{\alpha,\alpha\beta+1}^{\beta}(t_k;-1).
\end{split}
\end{equation}
Then we obtain the second inequality in \eqref{disstabthm3} from \eqref{defnEk} immediately.
The proof is completed.
\end{proof}

\begin{rem}\label{Li-case} {\em  For the C-C model {\rm(}i.e., $\beta=1${\rm)}, the energy dissipation  was proved  by Li et al. \cite{li2011developing},  where the  equation of the induced polarization and electric field was formulated as the Caputo fractional differential form   {\rm (}see \eqref{caputodef0} with $\beta=1${\rm).} However, it appeared nontrivial to show the strong energy dissipation and stability similar to  \eqref{defnEk}, as the bound therein contained a constant $C>1$  between  $k$th and $(k-1)$th steps {\rm(}see  \cite[Theorem 3.1]{li2011developing}{\rm)}.   Though our result does not resolve this deficiency,  as we work with the integral formulation of  the induced polarization and electric field, and the discretisation schemes  are different,  we believe our argument can shed light on the analysis of the scheme based on the fractional differential form.
\qed}
\end{rem}
%

\subsection{Convergence analysis} \label{subsect33}
Next, we carry out the convergence analysis of the semi-discrete scheme \eqref{39eqn}. Denote ${\bm \varepsilon}_1^k={\bm E}^k-{\bm E}(t_k)$, ${\bm \varepsilon}_2^k={\bm H}^k-{\bm H}(t_k)$, and ${\bm \varepsilon}_3^k={\bm P}^k-{\bm P}(t_k)$. Then we can derive the following error equations from  subtracting \eqref{31eqn} from \eqref{39eqn}:
\begin{subequations}\label{325eqn}
\begin{align}
& \epsilon_{\infty} \left(\delta_t {\bm \varepsilon}_1^k, {\bm \phi}\right)+\left(\delta_t {\bm \varepsilon}_3^k, {\bm \phi}\right)-\left({\bm \varepsilon}_2^k, \nabla\times{\bm \phi}\right)=({\bm R}^k_1,{\bm \phi}), \quad\;  \forall {\bm \phi} \in H_0({\rm curl},\Omega),  \label{hntefderr1} \\[4pt]
& \left(\delta_t {\bm \varepsilon}_2^k, {\bm \psi}\right)+\left(\nabla\times{\bm \varepsilon}_1^k, {\bm \psi}\right)=({\bm R}^k_2,{\bm \phi}), ~~~\;\qquad \qquad \qquad~~\qquad\;\;\;  \forall {\bm \psi} \in {\bm L}^2(\Omega),  \label{hntefderr2}\\
& \left({\bm \varepsilon}_3^k, {\bm \varphi}\right)={\Delta \epsilon}\sum_{j=1}^{k}\varpi^{(\alpha,\beta)}_{k-j}\left({\bm \varepsilon}_1^j,{\bm \varphi}\right)-{\Delta \epsilon}({\bm R}^k_0,{\bm \varphi}), \quad \quad\quad \; \forall {\bm \varphi} \in {\bm L}^2(\Omega), \label{hntefderr3}
\end{align}
\end{subequations}
for $k=1,2,\cdots, N_t$, where ${\bm R}^k_0$ is defined in \eqref{TruncErr0} and
\begin{equation}\label{RkdefnA}
\begin{split}
& {\bm R}^k_1(\bm x)=\epsilon_{\infty}\left(\partial_t {\bm E}(\bm x, t_k)-\delta_t {\bm E}(\bm x, t_k)\right)+\partial_t {\bm P}(\bm x, t_k)-\delta_t {\bm P}(\bm x, t_k),\\
& {\bm R}^k_2(\bm x)=\partial_t {\bm H}(\bm x, t_k)-\delta_t {\bm H}(\bm x, t_k).
\end{split}
\end{equation}

Now, we can present the following convergence result for the semi-discrete scheme. Before the proof, we first give the following lemma.
\begin{lmm}\label{lem3-3} If ${\bm E}, {\bm P}, {\bm H} \in H^2(0,T;{\bm L}^2(\Omega))$
and $k\geq 1$, then we have
\begin{equation}\label{rmfirstest}
\|{\bm R}^k_1 \| \leq C{\Delta t} \left(\|{\bm E}\|_{H^2(0,T;{\bm L}^2(\Omega))}
+\|{\bm P}\|_{H^2(0,T;{\bm L}^2(\Omega))}\right); \quad \|{\bm R}^k_2 \| \leq C{\Delta t} \|{\bm H}\|_{H^2(0,T;{\bm L}^2(\Omega))},
\end{equation}
and
\begin{equation}\label{rm2ndest}
\|{\bm R}^k_0 \| \leq C{\Delta t} \|{\bm E}\|_{H^1(0,T;{\bm L}^2(\Omega))},\quad \|{\delta_t}{\bm R}^k_0 \|\leq C{\Delta t}\|{\bm E}\|_{H^2(0,T;{\bm L}^2(\Omega))},
\end{equation}
where $C$ is a generic positive constant independent of $\Delta t$ and any field but depending on $t_{k-1}$.
\end{lmm}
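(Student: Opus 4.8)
The estimates in Lemma~\ref{lem3-3} are all standard consistency bounds for the backward-Euler-type quadrature, so the strategy is to handle each term by a Taylor-expansion-with-integral-remainder argument, taking care that the kernel $e_{\alpha,\alpha\beta}^\beta(\cdot;-1)$ is only weakly singular (in fact, by Lemma~\ref{lemdisker}, it is nonnegative, decreasing, and its antiderivative $e_{\alpha,\alpha\beta+1}^\beta(\cdot;-1)$ is bounded on $[0,T]$). First I would treat the two finite-difference-type residuals: writing, for a sufficiently smooth Hilbert-space-valued function $\bm v$,
\begin{equation*}
\partial_t\bm v(t_k)-\delta_t\bm v(t_k)=\frac{1}{\Delta t}\int_{t_{k-1}}^{t_k}(s-t_{k-1})\,\partial_t^2\bm v(s)\,{\rm d}s,
\end{equation*}
the Cauchy--Schwarz inequality in $s$ gives $\|\partial_t\bm v(t_k)-\delta_t\bm v(t_k)\|\le C\Delta t\,\|\bm v\|_{H^2(t_{k-1},t_k;{\bm L}^2(\Omega))}\le C\Delta t\,\|\bm v\|_{H^2(0,T;{\bm L}^2(\Omega))}$. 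Applying this with $\bm v=\bm E$, $\bm v=\bm P$ and $\bm v=\bm H$ and using \eqref{RkdefnA} yields the bounds on $\|\bm R_1^k\|$ and $\|\bm R_2^k\|$ in \eqref{rmfirstest} immediately; the constant $C$ absorbs $\epsilon_\infty$ and is otherwise harmless.

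The two bounds on $\bm R_0^k$ in \eqref{rm2ndest} require a little more care because of the convolution structure in \eqref{TruncErr0}. Starting from the representation ${\bm R}^k_0(\bm x)=\sum_{j=1}^{k}\int_{t_{j-1}}^{t_j} e_{\alpha,\alpha\beta}^{\beta}(t_k-s;-1)\,(\bm E(\bm x,s)-\bm E(\bm x,t_j))\,{\rm d}s$, I would bound $|\bm E(\bm x,s)-\bm E(\bm x,t_j)|\le \int_{s}^{t_j}|\partial_\tau\bm E(\bm x,\tau)|\,{\rm d}\tau$, pull the $L^2(\Omega)$-norm inside via Minkowski's integral inequality, and use that on $(t_{j-1},t_j)$ one has $s\mapsto e_{\alpha,\alpha\beta}^\beta(t_k-s;-1)\ge 0$ with $\int_{t_{j-1}}^{t_j}e_{\alpha,\alpha\beta}^\beta(t_k-s;-1)\,{\rm d}s=\varpi_{k-j}^{(\alpha,\beta)}$. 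This leads to
\begin{equation*}
\|\bm R_0^k\|\le \Delta t\sum_{j=1}^{k}\varpi_{k-j}^{(\alpha,\beta)}\Big(\frac{1}{\Delta t}\int_{t_{j-1}}^{t_j}\|\partial_\tau\bm E(\cdot,\tau)\|^2\,{\rm d}\tau\Big)^{1/2}\le C\Delta t\sum_{j=1}^{k}\varpi_{k-j}^{(\alpha,\beta)}\,\|\bm E\|_{H^1(t_{j-1},t_j;{\bm L}^2(\Omega))},
\end{equation*}
and then Cauchy--Schwarz in $j$ together with $\sum_{j=1}^k\varpi_{k-j}^{(\alpha,\beta)}=e_{\alpha,\alpha\beta+1}^\beta(t_k;-1)\le C$ (by \eqref{bndS} and Lemma~\ref{lemBdker}) gives $\|\bm R_0^k\|\le C\Delta t\,\|\bm E\|_{H^1(0,T;{\bm L}^2(\Omega))}$, which is the first half of \eqref{rm2ndest}.

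For the second half I would first form $\delta_t\bm R_0^k=(\bm R_0^k-\bm R_0^{k-1})/\Delta t$ and reorganise the double sum so that the difference of the two convolutions is expressed against $\partial_s\bm E$. Concretely, using \eqref{TruncErr0} for consecutive indices and the identity $\varpi^{(\alpha,\beta)}_{k-j}-\varpi^{(\alpha,\beta)}_{k-1-j}=\int_{t_{j-1}}^{t_j}\big(e_{\alpha,\alpha\beta}^\beta(t_k-s;-1)-e_{\alpha,\alpha\beta}^\beta(t_{k-1}-s;-1)\big)\,{\rm d}s$ plus a boundary term from the $j=k$ block, I would peel off one factor of $\Delta t$ from the second-order difference $\bm E(\cdot,s)-\bm E(\cdot,t_j)$ versus its neighbour to produce $\partial^2_s\bm E$, exactly as for $\bm R_1^k$, while the kernel differences are controlled using the monotonicity/complete monotonicity of $e_{\alpha,\alpha\beta}^\beta(\cdot;-1)$ from Lemma~\ref{lemdisker}; the telescoped kernel weights again sum to at most $e_{\alpha,\alpha\beta+1}^\beta(T;-1)\le C$. \textbf{The main obstacle} is precisely this last estimate: getting the full $O(\Delta t)$ rate for $\|\delta_t\bm R_0^k\|$ (rather than an $O(\Delta t^{\alpha\beta})$ or log-loss) requires combining the one-step smoothing in time with the boundedness of the \emph{integrated} kernel in just the right way, so that neither the weak singularity of $e_{\alpha,\alpha\beta}^\beta(\cdot;-1)$ near $0$ nor the number $k\lesssim T/\Delta t$ of summands costs a power of $\Delta t$; the key point that makes it work is that $\sum_j$ of the (absolute) kernel weights stays uniformly bounded in $k$ by $e^\beta_{\alpha,\alpha\beta+1}(T;-1)$, which converts the sum into an average and leaves one clean factor of $\Delta t$ from the Taylor remainder.
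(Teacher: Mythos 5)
Your bounds for $\bm R_1^k$ and $\bm R_2^k$ are fine and coincide with what the paper leaves to ``standard finite difference analysis''. The first real problem is already in the first half of \eqref{rm2ndest}: from your intermediate bound $\|\bm R_0^k\|\le\sqrt{\Delta t}\sum_{j=1}^{k}\varpi^{(\alpha,\beta)}_{k-j}\|\bm E\|_{H^1(t_{j-1},t_j;{\bm L}^2(\Omega))}$, ``Cauchy--Schwarz in $j$ together with $\sum_j\varpi^{(\alpha,\beta)}_{k-j}\le C$'' does \emph{not} yield $C\Delta t$: Cauchy--Schwarz gives at best $\sqrt{\Delta t}\,\bigl(\varpi^{(\alpha,\beta)}_{0}\sum_j\varpi^{(\alpha,\beta)}_{k-j}\bigr)^{1/2}\|\bm E\|_{H^1(0,T;{\bm L}^2(\Omega))}\sim(\Delta t)^{(1+\alpha\beta)/2}\|\bm E\|_{H^1(0,T;{\bm L}^2(\Omega))}$, since $\varpi^{(\alpha,\beta)}_{0}\sim(\Delta t)^{\alpha\beta}$, which is weaker than $O(\Delta t)$ whenever $\alpha\beta<1$. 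The paper instead bounds $\|\bm E(\cdot,s)-\bm E(\cdot,t_j)\|$ uniformly in $s$ by $\Delta t$ times a uniform-in-time bound on $\partial_t\bm E$ (available because the lemma assumes $H^2$ regularity in time, so $\partial_t\bm E\in C([0,T];{\bm L}^2(\Omega))$), and then only uses $\sum_j\varpi^{(\alpha,\beta)}_{k-j}=e^{\beta}_{\alpha,\alpha\beta+1}(t_k;-1)\le C$ as in \eqref{bndS}; with your per-interval $L^2$-in-time Cauchy--Schwarz the clean factor $\Delta t$ is irrecoverably lost.

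The second gap, which you yourself flag as the main obstacle, is in the $\|\delta_t\bm R_0^k\|$ estimate, and there your proposed decomposition does not work. Differencing the weights via $\varpi^{(\alpha,\beta)}_{k-j}-\varpi^{(\alpha,\beta)}_{k-1-j}$ means integrating the \emph{first} difference $\bm E(\cdot,s)-\bm E(\cdot,t_j)=O(\Delta t)$ against the kernel difference $e^{\beta}_{\alpha,\alpha\beta}(t_{k-1}-s;-1)-e^{\beta}_{\alpha,\alpha\beta}(t_k-s;-1)$ (you cannot simultaneously also produce $\partial_s^2\bm E$; the differencing falls either on the kernel or on the field, not both). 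By \eqref{IntgML}, the total mass of that kernel difference over $(0,t_{k-1})$ equals $e^{\beta}_{\alpha,\alpha\beta+1}(t_{k-1};-1)+e^{\beta}_{\alpha,\alpha\beta+1}(\Delta t;-1)-e^{\beta}_{\alpha,\alpha\beta+1}(t_k;-1)$, and the block adjacent to the singularity at $s=t_{k-1}$ alone contributes $2e^{\beta}_{\alpha,\alpha\beta+1}(\Delta t;-1)-e^{\beta}_{\alpha,\alpha\beta+1}(2\Delta t;-1)\sim c(\Delta t)^{\alpha\beta}$; so after dividing by $\Delta t$ this route gives only $O((\Delta t)^{\alpha\beta})$ for $\alpha\beta<1$ --- the boundedness of the integrated kernel does not ``convert the sum into an average'' here. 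The paper's proof avoids this by keeping the kernel fixed: it splits off the first block $\frac1{\Delta t}\int_0^{t_1}e^{\beta}_{\alpha,\alpha\beta}(t_k-s;-1)\bm r^1(s)\,{\rm d}s$ and shifts $s\to s+\Delta t$ in the remaining blocks of $\bm R_0^k$, so that both convolutions share the kernel $e^{\beta}_{\alpha,\alpha\beta}(t_{k-1}-s;-1)$ and the integrand becomes the genuine second difference $[\bm E(s+\Delta t)-\bm E(t_{j+1})]-[\bm E(s)-\bm E(t_j)]=O((\Delta t)^2)$ measured by $\|\bm E\|_{H^2(0,T;{\bm L}^2(\Omega))}$, integrated against a kernel of bounded total mass $e^{\beta}_{\alpha,\alpha\beta+1}(t_{k-1};-1)$; the leftover block sits at the \emph{regular} end of the kernel (argument $t_k-s\ge t_{k-1}$), so monotonicity from Lemma \ref{lemdisker} bounds it by $C\Delta t\,e^{\beta}_{\alpha,\alpha\beta}(t_{k-1};-1)\|\bm E\|_{H^1(0,T;{\bm L}^2(\Omega))}$ --- which is also exactly where the $t_{k-1}$-dependence of the constant in the lemma comes from, a point your sketch does not account for.
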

\begin{proof} From  \eqref{RkdefnA}, we can obtain readily the estimates  \eqref{rmfirstest} using   the standard finite difference analysis for $1\le k\le N_t$.

We now derive \eqref{rm2ndest}. Denote ${\bm r}^k={\bm E}(\bm x, t)-{\bm E}(\bm x, t_k), t \in (t_{k-1},t_k).$ Then by \eqref{TruncErr0}, \eqref{IntgML} and the completely monotonicity of $e_{\alpha,\alpha\beta}^{\beta}\left(z;-(\Delta t)^{\alpha}\right) (0<\alpha<1, 0<\beta \leq 1, z>0)$ (see \eqref{CMML}), we have
\begin{equation*}
\|{\bm R}^k_0\|\leq {\Delta t}\|{\bm E}\|_{H^1((0,T);{\bm L}^2(\Omega))}\int_{0}^{t_j} e_{\alpha,\alpha\beta}^{\beta}(t_j-s;-1)\,{\rm d}s={\Delta t}\,e_{\alpha,\alpha\beta+1}^{\beta}(t_j;-1)\|{\bm E}\|_{H^1(0,T;{\bm L}^2(\Omega))},
\end{equation*}
and
\begin{equation*}
\label{DiffR0bound}
\begin{split}
& \|{\delta_t}{\bm R}^k_0\|=\frac{1}{\Delta t}\Big\|\sum_{j=1}^{k}\int_{t_{j-1}}^{t_j} e_{\alpha,\alpha\beta}^{\beta}(t_k-s;-1)\,{\bm r}^j(s){\rm d}s-\sum_{j=1}^{k-1}\int_{t_{j-1}}^{t_j} e_{\alpha,\alpha\beta}^{\beta}(t_{k-1}-s;-1)\,{\bm r}^j(s){\rm d}s\Big\|\\
& \le \frac{1}{\Delta t}\Big\|\sum_{j=1}^{k-1}\int_{t_{j-1}}^{t_j} e_{\alpha,\alpha\beta}^{\beta}(t_{k-1}-s;-1)\,\left[{\bm r}^j(s+\Delta t)-{\bm r}^j(s)\right]{\rm d}s\Big\|+\frac{1}{\Delta t}\Big\|\int_{0}^{t_1} e_{\alpha,\alpha\beta}^{\beta}(t_{k}-s;-1)\,{\bm r}^1(s){\rm d}s\Big\|\\
& \le \frac{1}{\Delta t}\bigg(C({\Delta t})^2\|{\bm E}\|_{H^2(0,T;{\bm L}^2(\Omega))}\int_{0}^{t_{k-1}} e_{\alpha,\alpha\beta}^{\beta}(t_{k-1}-s;-1){\rm d}s\\
&\quad +C{\Delta t}\|{\bm E}\|_{H^1(0,T;{\bm L}^2(\Omega))}\int_{0}^{\Delta t} e_{\alpha,\alpha\beta}^{\beta}(t_{k}-s;-1)\,{\rm d}s\bigg)\\
& \le C{\Delta t}\|{\bm E}\|_{H^2(0,T;{\bm L}^2(\Omega))}\int_{0}^{t_{k-1}} e_{\alpha,\alpha\beta}^{\beta}(t_{k-1}-s;-1){\rm d}s+C e_{\alpha,\alpha\beta}^{\beta}(t_{k-1};-1)\|{\bm E}\|_{H^1(0,T;{\bm L}^2(\Omega))}\int_{0}^{\Delta t} {\rm d}s\\
& \le C{\Delta t}\big(e_{\alpha,\alpha\beta+1}^{\beta}(t_{k-1};-1)\|{\bm E}\|_{H^2(0,T;{\bm L}^2(\Omega))}  +e_{\alpha,\alpha\beta}^{\beta}(t_{k-1};-1)\|{\bm E}\|_{H^1(0,T;{\bm L}^2(\Omega))}\big).
\end{split}
\end{equation*}
Then by \eqref{BoundML}, the proof is completed.
\end{proof}


In light of Theorem \ref{zeng-thmSemiDisStab}  and Lemma \ref{lem3-3}, we obtain the following convergence result on the semi-discrete scheme \eqref{39eqn}.
\begin{thm}\label{thmsemicon} Let ${\bm E}({\bm x},t), {\bm P}({\bm x},t), {\bm H}({\bm x},t)$ be the  solution of \eqref{scaledsystem}, and let ${\bm E}^k, {\bm P}^k, {\bm H}^k$ be the solution of \eqref{39eqn}. Assume that
$${\bm E}\in H^2\big(0,T;  H_0({\rm curl},\Omega) \cap {\bm L}^2(\Omega)\big) ~~ {\rm and}~~ {\bm P}, {\bm H} \in H^2\big(0,T;{\bm L}^2(\Omega)\big).
$$
Then for $\Delta t<1- c^*$ for given constant $c^*\in (0,1)$ as in Remark \ref{exp-const},   we have the error estimate
\begin{equation}\label{SemiConResult}
\begin{split}
& \|{\bm E}^k(\cdot)-{\bm E}(\cdot,t_k)\|^2+\|{\bm H}^k(\cdot)-{\bm H}(\cdot,t_k)\|^2
+\|{\bm P}^k(\cdot)-{\bm P}(\cdot,t_k)\|^2 \\
&\quad \leq  C(\Delta t)^2 \big(\|{\bm E}\|^2_{H^2(0,T;{\bm L}^2(\Omega))}
+\|{\bm P}\|^2_{H^2(0,T;{\bm L}^2(\Omega))}+\|{\bm H}\|^2_{H^2(0,T; {\bm L}^2(\Omega))} \big),
\end{split}
\end{equation}
where the constant $C$ is inherited from Remark \ref{exp-const}, Corollary \ref{corSemiDisStab} and Lemma \ref{lem3-3}.
\end{thm}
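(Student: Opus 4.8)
The plan is to view the error triple $(\bm\varepsilon_1^k,\bm\varepsilon_2^k,\bm\varepsilon_3^k)$ as the solution of the \emph{perturbed} scheme \eqref{39eqn2} and to feed the truncation-error bounds of Lemma \ref{lem3-3} into the stability estimate of Theorem \ref{zeng-thmSemiDisStab}. First I would note that, comparing the error equations \eqref{325eqn}--\eqref{RkdefnA} with \eqref{39eqn2}, the errors solve \eqref{39eqn2} with data $\bm f^k=\bm R_1^k$, $\bm g^k=\bm R_2^k$, $\bm h^k=-\bm R_0^k$ and with vanishing initial data $\bm\varepsilon_1^0=\bm\varepsilon_2^0=\bm\varepsilon_3^0=\bm 0$, since $\bm E^0=\bm E_0=\bm E(\cdot,0)$, $\bm H^0=\bm H_0=\bm H(\cdot,0)$ and $\bm P^0=\bm 0=\bm P(\cdot,0)$. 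In particular the corresponding discrete energy from \eqref{zeng-defnEk}, call it $\mathscr E^k$ with $\bm E^j$ replaced by $\bm\varepsilon_1^j$, satisfies $\mathscr E^0=0$.

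Next I would verify the summability hypothesis \eqref{fghcondition}. By Lemma \ref{lem3-3}, each of $\|\bm R_1^j\|$, $\|\bm R_2^j\|$ and $\|\delta_t\bm R_0^j\|$ is bounded by $C\Delta t$ times the appropriate $H^2(0,T;{\bm L}^2(\Omega))$ norm of $\bm E$, $\bm P$, $\bm H$, with a constant depending only on $t_{j-1}\le T$; hence, using $\Delta t\sum_{j=1}^k 1=t_k\le T$,
\[
\bm Q^k \;\le\; C(\Delta t)^2\big(\|\bm E\|^2_{H^2(0,T;{\bm L}^2(\Omega))}+\|\bm P\|^2_{H^2(0,T;{\bm L}^2(\Omega))}+\|\bm H\|^2_{H^2(0,T;{\bm L}^2(\Omega))}\big)<\infty.
\]
Choosing $\varrho=1$ and $\Delta t<1-c^*$ as in Remark \ref{exp-const}, the bound \eqref{zeng-disstabthm100} applied to the error energy gives $\mathscr E^k\le c_*^{-1}\exp(t_{k-1}/c_*)\bm Q^k$; since $\epsilon_\infty\ge 1$ and the history sum in $\mathscr E^k$ is nonnegative by Lemma \ref{lemdisker}, this already controls $\|\bm\varepsilon_1^k\|^2+\|\bm\varepsilon_2^k\|^2$, and, the right-hand side being nondecreasing in $k$, it also controls $\max_{1\le j\le k}\|\bm\varepsilon_1^j\|^2$ by the same $C(\Delta t)^2(\cdots)$.

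Finally, for the polarisation error I would test \eqref{hntefderr3} with $\bm\varphi=\bm\varepsilon_3^k$ and apply Cauchy--Schwarz to get
\[
\|\bm\varepsilon_3^k\|\;\le\;\Delta\epsilon\Big(\sum_{j=1}^k\varpi^{(\alpha,\beta)}_{k-j}\|\bm\varepsilon_1^j\|+\|\bm R_0^k\|\Big)\;\le\;\Delta\epsilon\Big(e_{\alpha,\alpha\beta+1}^{\beta}(t_k;-1)\max_{1\le j\le k}\|\bm\varepsilon_1^j\|+\|\bm R_0^k\|\Big),
\]
where $\sum_{j=1}^k\varpi^{(\alpha,\beta)}_{k-j}=e_{\alpha,\alpha\beta+1}^{\beta}(t_k;-1)$ by \eqref{bndS}, which is finite on $[0,T]$ by Lemma \ref{lemBdker}. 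Inserting the bound on $\max_j\|\bm\varepsilon_1^j\|$ from the previous step together with $\|\bm R_0^k\|\le C\Delta t\|\bm E\|_{H^1(0,T;{\bm L}^2(\Omega))}$ from Lemma \ref{lem3-3}, then squaring and absorbing every $T$-dependent factor into $C$, yields \eqref{SemiConResult}. I do not anticipate a serious obstacle here: the only point requiring care is that the constant in Lemma \ref{lem3-3} depends on $t_{k-1}$, which is harmless because $t_{k-1}\le T$ and the relevant Mittag--Leffler-type kernels are bounded on $[0,T]$ (Lemma \ref{lemBdker}); the substantive work --- the discrete Gr\"onwall argument and the positive-definiteness of the convolution kernel --- has already been done in Theorem \ref{zeng-thmSemiDisStab} and Lemma \ref{lemdisker}.
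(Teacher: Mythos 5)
Your proposal is correct and follows essentially the same route as the paper: the authors likewise treat the error equations \eqref{325eqn} as an instance of the perturbed scheme \eqref{39eqn2} with data given by the truncation errors, invoke Theorem \ref{zeng-thmSemiDisStab} and Remark \ref{exp-const} (with zero initial error energy) plus Lemma \ref{lem3-3} to bound $\|{\bm \varepsilon}_1^k\|^2+\|{\bm \varepsilon}_2^k\|^2$, and then estimate $\|{\bm \varepsilon}_3^k\|$ from \eqref{hntefderr3} exactly as in Corollary \ref{corSemiDisStab}, using \eqref{bndS} and the bound on $\|{\bm R}_0^k\|$.
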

\begin{proof}
Taking ${\bm \phi}={\Delta t}\,{\bm \varepsilon}_{1}^k$ in \eqref{hntefderr1}, ${\bm \psi}={\Delta t} {\bm \varepsilon}_{2}^k$ in \eqref{hntefderr2}, and ${\bm \varphi}={\bm \varepsilon}_{3}^k$ in \eqref{hntefderr3}, and following  the derivation of \eqref{disstab3}, we obtain
\begin{equation*}
\begin{split}
\big(\epsilon_{\infty} &+{\Delta \epsilon}\, \varpi^{(\alpha,\beta)}_{0}\big) \|{\bm \varepsilon}_{1}^k\|^2 + \|{\bm \varepsilon}_{2}^k\|^2\\
& =\epsilon_{\infty}\left({\bm \varepsilon}_{1}^{k-1},{\bm \varepsilon}_{1}^k\right)+\left({\bm \varepsilon}_{2}^{k-1},{\bm \varepsilon}_{2}^k\right) +{\Delta \epsilon}\sum_{j=1}^{k-1}\big(\varpi^{(\alpha,\beta)}_{k-1-j}
-\varpi^{(\alpha,\beta)}_{k-j}\big)\big({\bm \varepsilon}_{1}^j, {\bm \varepsilon}_{1}^k\big)\\
&\quad  +{\Delta t}\,({\bm R}^k_1 + {\Delta \epsilon} \delta_t  {\bm R}^k_0 ,{\bm \varepsilon}_{1}^k)
+ {\Delta t}\,({\bm R}^k_2,{\bm \varepsilon}_{2}^k) .
\end{split}
\end{equation*}
Then we can derive from Theorem \ref{zeng-thmSemiDisStab} and Remark \ref{exp-const}  that
\begin{equation}\label{wbnd1}
\begin{split}
\|{\bm \varepsilon}_{1}^k\|^2+\|{\bm \varepsilon}_{2}^k\|^2
\le   C  {\Delta t}\sum_{j=1}^k (\|{\bm R}_1^j\|^2
+ (\Delta \epsilon)^2  \|\delta_t {\bm R}_0^j \|^2+ \|{\bm R}_2^j\|^2),
\end{split}
\end{equation}
where we used  the facts $\epsilon_{\infty} \ge 1$, ${\Delta \epsilon}\sum_{j=1}^{k}\varpi^{(\alpha,\beta)}_{k-j}\,\|{\bm \varepsilon}_{1}^j\|^2 \ge 0,$ and ${\bm \varepsilon}_{1}^0={\bm \varepsilon}_{2}^0=\bs 0$.

Similar to the proof of Corollary \ref{corSemiDisStab}, we find  from \eqref{hntefderr3} that
\begin{equation}\label{wbnd2}
\begin{split}
\|{\bm \varepsilon}_3^k\| & \le{\Delta \epsilon}\sum_{j=1}^{k}\varpi^{(\alpha,\beta)}_{k-j}\|{\bm \varepsilon}_1^j\|+{\Delta \epsilon}\|{\bm R}^k_0\|
\le \big({\Delta \epsilon}\, e_{\alpha,\alpha\beta+1}^{\beta}(t_k;-1)\big)\, \max_{1\leq j \leq k}\|{\bm \varepsilon}_{1}^j\| +{\Delta \epsilon}\,\|{\bm R}^k_0\|.
\end{split}
\end{equation}
Finally, applying Lemma \ref{lem3-3} to \eqref{wbnd1}-\eqref{wbnd2}, we arrive at the error estimate \eqref{SemiConResult}.
\end{proof}

\begin{rem}\label{2ndscheme}  {\em   In principle, we can upgrade the first-order  temporal scheme \eqref{39eqn}  to  a  second-order scheme. The essential component is to apply  the piecewise linear approximation to  \eqref{rehnmodelp} that yields
\begin{equation*}\label{fratrap}
\begin{split}
{\bm P}(\bs x, t_k)  & \approx  {\Delta \epsilon} \sum_{j=1}^{k}\int_{t_{j-1}}^{t_j} e_{\alpha,\alpha\beta}^{\beta}(t_k-s;-1)\bigg\{\frac{t_j-s}{t_j-t_{j-1}}{\bm E}^{j-1}(\bs x)+\frac{s-t_{j-1}}{t_j-t_{j-1}}{\bm E}^{j}(\bs x)\bigg\}\,{\rm d}s
\\ & := {\Delta \epsilon}\sum_{j=0}^{k}\rho^{(\alpha,\beta)}_{k-j}{\bm E}^j(\bs x),  \quad k \geq 1,
\end{split}
\end{equation*}
where  the weights  can be computed by
$$
\rho^{(\alpha,\beta)}_{k-j}=\int_{t_{j-1}}^{t_j} e_{\alpha,\alpha\beta}^{\beta}\left(t_k-s;-1\right)\frac{s-t_{j-1}}{t_j-t_{j-1}}{\rm d}s+
\int_{t_{j}}^{t_{j+1}} e_{\alpha,\alpha\beta}^{\beta}\left(t_k-s;-1\right)\frac{t_{j+1}-s}{t_{j+1}-t_{j}}{\rm d}s,
$$
for $1\le j\le k-1,$ and $\big\{\rho^{(\alpha,\beta)}_{0}, \rho^{(\alpha,\beta)}_{k}\big\}$ have similar expressions. It is seen  that the monotonicity of the  weights in Lemma \ref{lemdisker} has played a critical role in the analysis.  However, it is still unknown if
$\{\rho^{(\alpha,\beta)}_{k-j}\}$ enjoys the same property.  In fact, we have observed from some numerical evidences that it is parametric dependent as this property is not true for all  $\alpha,\beta.$}
\end{rem}

\section{Implementation and numerical results}\label{Sect4}
\setcounter{equation}{0}
\setcounter{lmm}{0}
\setcounter{thm}{0}

In this section, we follow the idea of \cite{Lubich2002fast,zeng2018stable} to introduce a fast temporal convolution algorithm
that can alleviate  the history dependence of the temporal convolution in the scheme \eqref{39eqn}. It is noteworthy that the semi-discrete scheme and the analysis
 in Section \ref{Sect3}, together with the fast algorithm to be described below, can be incorporated with various spatial discretisation. Compared with the classical Maxwell’s equations, the most  challenging issue in the H-N model is the treatment of the temporal  convolution with a singular kernel function.  Here, we demonstrate the full discretisation via a spectral-Galerkin method for the two-dimensional model.  This situation is reminiscent to the comparison of several methods for the Maxwell's equations in  \cite{monk1992a}: ``Of course this is not the setting of real physical interest; however, the two-dimensional case makes a convenient test problem.''  We remark  that one can extend the method to finite element methods or finite differences  in two or more  dimensions. 

\subsection{Fast temporal convolution algorithm} \label{FastC}

Among many recent developments of fast algorithms in particular for fractional integral/derivatives, there are a few works on developing fast algorithms for the much more involved kernel function, i.e., the ML function with three parameters.
This algorithm  can incorporate  into \eqref{39eqn} with different spatial discretisations.

\medskip
 We summarise the algorithm as follows.
\medskip
\begin{itemize}
  \item[\underline{\bf Step 1}] ~ Decompose \eqref{rehnmodelp} as
  \begin{equation}\label{LandHparts}
  \begin{split}
  {\bm P}(\bs x, t)
  & =   {\Delta \epsilon}\int_{t-{\Delta t}}^{t} e_{\alpha,\alpha\beta}^{\beta}(t-s;-1)\,{\bm E}(\bm x, s)\,{\rm d}s+{\Delta \epsilon}\int_{0}^{t-{\Delta t}} e_{\alpha,\alpha\beta}^{\beta}(t-s;-1)\,{\bm E}(\bm x, s)\,{\rm d}s  \\
  & :=  {\mathcal L}({\bm E}; t)+{\mathcal H}({\bm E}; t),
  \end{split}
  \end{equation}
  where ${\mathcal L}({\bm E};t)$ and ${\mathcal H}({\bm E}; t)$ are respectively the local  and history parts.  Corresponding to the discretisation  in \eqref{frarect},  we have
  $$
 {\mathcal L}({\bm E}; t_k)\approx {\mathcal L}\left(I_{\Delta t} {\bm E}; t_k\right)=\varpi^{(\alpha,\beta)}_{0}{\bm E}^k; \quad  {\mathcal H}({\bm E}; t_k)\approx  {\mathcal H}\left(I_{\Delta t} {\bm E}; t_k\right).
  $$
We remark that the direct implementation based on the above   requires $O(N_t)$ storage and $O(N_t^2)$ operations, which is computationally expensive for long time  and multi-dimensional simulations. The essence of the fast algorithm is to further approximate the kernel function $e_{\alpha,\alpha\beta}^{\beta}(\cdot;-1)$ that allows for computing the history part in a recursive manner.

\medskip
  \item [\underline{\bf Step 2}]~ Given an integer $B\ge 2,$ let $L$ be the smallest integer satisfying $t_k < 2B^L{\Delta
      t}$. For $\ell=1,\cdots,L-1$, we can determine the integer $q_{\ell}$  and  $s_{\ell}=q_{\ell}B^{\ell}{\Delta t}$
      such that
      $$ t_k-s_{\ell} \in I_{\ell}:=[B^{\ell-1}{\Delta t}, (2B^{\ell}-1){\Delta t}].$$
As such, we have (see  \cite{Lubich2002fast})
       $$ t_k-{\Delta t}=s_0>s_1>\cdots>s_{L-1}>s_L=0.$$

\medskip

  \item[\underline{\bf Step 3}]~ Seek the approximation of $e_{\alpha,\alpha\beta}^{\beta}(t;-1)$ on $I_{\ell}$ by applying the trapezoidal rule to a parametrisation of the contour integral for the inverse Laplace transform:
      \begin{equation}
      \label{SOEs}
      \begin{split}
      e_{\alpha,\alpha\beta}^{\beta}(t;-1)
      & = \frac{1}{2\pi i}\int_{\Gamma_{\ell}} \mathscr{L}\big[e_{\alpha,\alpha\beta}^{\beta}(t;-1)\big](\lambda)e^{t\lambda}\,{\rm d}\lambda= \frac{1}{2\pi i}\int_{\Gamma_{\ell}} \frac{e^{t\lambda}}{(\lambda^{\alpha}+1)^{\beta}}{\rm d}\lambda\\
      & \approx \sum_{j=-N_{\rm col}}^{N_{\rm col}-1} \frac{{\hat \omega}_j^{(\ell)}e^{t\lambda_j^{(\ell)}}}{\big(\big({\lambda}_j^{(\ell)}\big)^{\alpha}+1\big)^{\beta}}, 
      \end{split}
      \end{equation}
      with a precision $\varepsilon_{\rm f}>0$ and a complex contour $\Gamma_{\ell}$ which can be suitably chosen following the ideas in \cite{Lubich2002fast,garrappa2015numerical}.
      Here, ${\hat \omega}_j^{(\ell)}$, $\lambda_j^{(\ell)}$ are the weights and quadrature points for the contour $\Gamma_{\ell}$. The number of quadrature points on $\Gamma_{\ell}$, $2N_{\rm col}$ is chosen independent of $\ell$.

\medskip
  \item[\underline{\bf Step 4}] ~ Using \eqref{SOEs}, the history part ${\mathcal H}\left(I_{\Delta t} {\bm E}; t_k\right)$ can be approximated by
      {\small \begin{eqnarray*}
      {\mathcal H}\left(I_{\Delta t} {\bm E}; t_k\right)={\rm Im}\Bigg\{\sum_{\ell=1}^{L}\sum_{j=-N_{\rm col}}^{N_{\rm col}-1}  \frac{{\hat \omega}_j^{(\ell)}e^{(t_k-s_{\ell-1})\lambda_j^{(\ell)}}}{\big[\big({\lambda}_j^{(\ell)}\big)^{\alpha}+1\big]^{\beta}}
      {\bm y}\left(s_{\ell-1},s_{\ell},\lambda_j^{(\ell)}\right)\Bigg\},
      \end{eqnarray*}}
      where ${\rm Im}\{u\}$ stands for the  imaginary part of $u$, and
      $${\bm y}(s)={\bm y}\big(s,s_{\ell},\lambda_j^{(\ell)}\big)=\int_{s_{\ell}}^{s}e^{-(s-s_{\ell})
      \lambda_j^{(\ell)}}I_{\Delta t} {\bm E}(s) {\rm d}s$$
       satisfies the following ODE
      \begin{equation*}
      {\bm y}'(s)=\lambda_j^{(\ell)}{\bm y}(s)+I_{\Delta t} {\bm E}(s),\quad {\bm y}(s_{\ell})=0.
      \end{equation*}
\end{itemize}
\smallskip
\begin{rem}
{\em This fast convolution algorithm has the same storage and computational cost as that in \cite{Lubich2002fast}, i.e., it requires $O(\log N_t)$ storage and $O(N_t \log N_t)$ operations over $N_t$ time steps, when only cost in time direction is considered. However, the direct implementation of the scheme \eqref{frarect} would require $O(N_t)$ storage and $O(N_t^2)$ operations, which is computational expensive and forms a bottleneck for long time simulation. It is worthy noting that the kernel function in H-N model {\rm(}see \eqref{gml}{\rm)} is much more complex than the kernel functions in \cite{Lubich2002fast,zeng2018stable} and references therein, so how to develop a fast convolution algorithm for the H-N model is much more involved. We also point out that  some different approaches were developed in   \cite{causley2011incorporating,xu2013a}.
 }
\end{rem}


\subsection{Full discretisation of a two-dimensional H-N model} \label{subsect42}
As an illustration of full-discrete scheme,  we consider the spatial discretisation of the H-N model using the spectral-Galerkin method in two dimensions.
More precisely, we  consider \eqref{scaledsystem} on the rectangular  domain $\Omega=(a, b)\times(c, d)$ of the form:
\begin{subequations}\label{hnte2D}
\begin{align}
& \epsilon_{\infty} \pt{\bm E} +\pt {\bm P} = {\rm {\bf {curl}}}~H, \quad\,
\pt H =-{\rm curl}~{\bm E} & {\rm in} \;\;  \Omega \times (0,T], \label{hnte1}\\
& {\bm P}(x,y,t)={\Delta \epsilon}\int_{0}^{t} e_{\alpha,\alpha\beta}^{\beta}(t-s;-1){\bm E}(x,y,s)\,{\rm d}s    & {\rm in} \;\; \Omega \times (0,T], \label{hnte3}\\
& {\bm E}(x,y,0)={\bm E}_0(x,y),\quad H(x,y,0) = H_0(x,y), &  {\rm in} \;\; \Omega, \label{hntei}\\[2pt]
& E_x(x,c,t)=E_x(x,d,t)=E_y(a,y,t)=E_y(b,y,t)=0    & {\rm for} \;\;  t \in (0,T], \label{hnteb}
\end{align}
\end{subequations}
where ${\bm E}=(E_x,E_y)^T$ and ${\bm P}=(P_x,P_y)^T$ are vectors, but $H$ is a scalar unknown. Recap on  the two-types of curl operators:
$${\rm {\bf {curl}}}~H=\left(\py{H},-\px{H}\right)^T, \quad {\rm curl}~{\bm E}=\py{E_x}-\px{E_y}.$$


  Let $\mathbb P_{N}$ be the space of the algebraic polynomials in one variable of degree not more than $N,$
  and let  $\mathbb P_{N}^0$ be the subspace of $\mathbb P_{N},$ where each polynomial vanishes at the two end-points of the interval.
   We further denote $V_N=\mathbb P_{N}\times \mathbb P_{N},$ and define
  \begin{equation*}
  \bm V_N^0=\big\{(u,v)^T\in (V_N)^2 \,:\, u|_{y=c}=u|_{y=d}=0, \; v|_{x=a}=v|_{x=b}=0 \big\}.
  \end{equation*}
  The full-discrete scheme for \eqref{hnte} is to find ${\bm E}_{N}^k, {\bm P}_{N}^k \in   \bm V_N^0$ and $H_N^k \in V_N$ such that for $k \geq 1,$
\begin{subequations}\label{hntebelg}
\begin{align}
& \epsilon_{\infty} \left(\delta_t {\bm E}_{N}^k, {\bm \phi}\right)+\left(\delta_t {\bm P}_{N}^k , {\bm \phi}\right)=\left({\rm {\bf {curl}}}~{H}_N^k, {\bm \phi}\right), \qquad \quad \forall {\bm \phi} \in  \bm V_N^0,  \label{hntebelg1} \\[4pt]
&\left(\delta_t H_{N}^k, \psi\right)=-\left({\rm curl}~{{\bm E}}_{N}^k, \psi\right), \qquad \qquad \qquad \qquad~\qquad  \;\;\;\; \forall \psi \in V_N,  \label{hntebelg2}
\\& \left({\bm P}_{N}^k, {\bm \varphi}\right)={\Delta \epsilon}\sum_{j=1}^{k}\varpi^{(\alpha,\beta)}_{k-j}\big({\bm E}_{N}^j, {\bm \varphi}\big), \qquad \qquad  \qquad  \quad\;\; \forall {\bm \varphi} \in   \bm V_N^0, \label{hntebelg3}
\end{align}
\end{subequations}
where the initial values are
\begin{equation}
\label{hntecnlgicon}
{\bm E}_{N}^0 = \mathcal{I}_N{\bm E}_0, \quad H_{N}^0 = \mathcal{I}_N H_{0},  \quad {\bm P}_{N}^0 = \bf 0.
\end{equation}
Here, $\mathcal{I}_N: C(\bar \Omega)\to V_N$ is the tensorial Legendre-Gauss-Lobatto (LGL) interpolation operator.

\begin{rem}\label{FullDisDivFree} {\em Note that ${\bm E}_N^k$, ${\bm P}_N^k$ and $H_N^k$ are expansions in terms of  Legendre basis polynomials. Then taking the divergence of \eqref{hntebelg1},  we find that $\nabla\cdot(\epsilon_{\infty} {\bm E}_N^k +{\bm P}_N^k)=0$ as we can show that $\nabla\cdot({\rm {\bf {curl}}}~H_N^k) \equiv 0$.
Indeed, we can write $H_N^k$ in terms of the Legendre polynomials:
$$H_N^k = \sum_{i=0}^{N}\sum_{j=0}^{N} {\tilde H}_{ij}^k L_i(x)L_j(y),$$
where ${\tilde H}_{ij}^k$ are the expansion coefficients. From direct calculation, we  obtain
$$\nabla\cdot({\rm {\bf {curl}}}~H_N^k) = \sum_{i=0}^{N}\sum_{j=0}^{N} {\tilde H}_{ij}^k \big(L'_i(x)L'_j(y)-L'_i(x)L'_j(y)\big) \equiv 0.$$
Therefore, we can claim $\nabla\cdot {\bm P}_N^k=0$ and $\nabla\cdot{\bm E}_N^k=0$ like in the derivation in Remark \ref{TDisDivFree}. However, we note that the discrete magnetic field $H_N^k$ is a scalar, thus the divergence of it is not defined \cite{monk1992a}.   We shall provide some numerical verifications in  Table \ref{Tab2}. }
\end{rem}

%
%
%
%
%
%

The stability and well-posedness of the scheme  \eqref{hntebelg} is a direct consequence of  Theorem \ref{zeng-thmSemiDisStab}.
\begin{thm}\label{thmFullyDisStab} The full-discrete scheme \eqref{hntebelg} is unconditionally stable in the sense that  for all $\Delta t>0,$
\begin{equation*}
\mathscr E_{N}^k\le \mathscr E_{N}^{k-1}\le \cdots\le \mathscr E_{N}^0, \quad k\ge 1,
\end{equation*}
where $\mathscr E^0_N:=\epsilon_{\infty}\|{\bm E}_N^{0}\|^2+ \|{H}_N^{0}\|^2$ and
\begin{equation*}
\mathscr E_{N}^k:=\epsilon_{\infty}\|{\bm E}_{N}^{k}\|^2+ \|{H}_{N}^{k}\|^2+{\Delta \epsilon}\sum_{j=1}^{k}\varpi^{(\alpha,\beta)}_{k-j}\|{\bm E}_{N}^j\|^2,\quad   k\ge 1.
\end{equation*}
\end{thm}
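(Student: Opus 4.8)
The plan is to observe that the full-discrete scheme \eqref{hntebelg} has exactly the same algebraic structure as the abstract semi-discrete scheme \eqref{39eqn}: the test and trial spaces are finite-dimensional, but all one uses in the proof of Theorem \ref{zeng-thmSemiDisStab} is that they are (real) inner-product spaces on which the Green/integration-by-parts identity holds. Concretely, I would first record that choosing $\bm\phi = \Delta t\,{\bm E}_N^k \in \bm V_N^0$ in \eqref{hntebelg1} and $\psi = \Delta t\,H_N^k \in V_N$ in \eqref{hntebelg2} and adding the two equations gives
\begin{equation*}
\epsilon_\infty\big({\bm E}_N^k-{\bm E}_N^{k-1},{\bm E}_N^k\big)+\big(H_N^k-H_N^{k-1},H_N^k\big)+\big({\bm P}_N^k-{\bm P}_N^{k-1},{\bm E}_N^k\big)=0,
\end{equation*}
since the two curl terms cancel exactly — here one must check that $\big({\rm {\bf {curl}}}\,H_N^k,{\bm E}_N^k\big)+\big({\rm curl}\,{\bm E}_N^k,H_N^k\big)=0$, which holds because ${\bm E}_N^k\in\bm V_N^0$ has vanishing tangential trace on $\partial\Omega$ so the boundary term in the two-dimensional integration by parts drops out. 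This is the sole ingredient that is specific to the spectral space, and I expect it to be routine; the polynomials $\bm V_N^0$ were defined precisely so that this holds.

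Next I would eliminate ${\bm P}_N^k$ using \eqref{hntebelg3} with $\bm\varphi={\bm E}_N^k$, exactly as in the passage from \eqref{zeng-disstab1} to \eqref{disstab2}--\eqref{disstab3}, arriving at
\begin{equation*}
\big(\epsilon_\infty+\Delta\epsilon\,\varpi^{(\alpha,\beta)}_0\big)\|{\bm E}_N^k\|^2+\|H_N^k\|^2
=\epsilon_\infty\big({\bm E}_N^{k-1},{\bm E}_N^k\big)+\big(H_N^{k-1},H_N^k\big)
+\Delta\epsilon\sum_{j=1}^{k-1}\big(\varpi^{(\alpha,\beta)}_{k-1-j}-\varpi^{(\alpha,\beta)}_{k-j}\big)\big({\bm E}_N^j,{\bm E}_N^k\big).
\end{equation*}
Then I would apply Cauchy--Schwarz to each term on the right and invoke Lemma \ref{lemdisker} (valid for $0<\alpha<1$, $0<\beta\le1$): the differences $\varpi^{(\alpha,\beta)}_{k-1-j}-\varpi^{(\alpha,\beta)}_{k-j}\ge0$ and the telescoping identity \eqref{omegak0} hold verbatim for these same weights. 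Absorbing the $\tfrac12\|{\bm E}_N^k\|^2$, $\tfrac12\|H_N^k\|^2$ terms to the left and recognising $\mathscr E_N^k$ and $\mathscr E_N^{k-1}$ in the definition \eqref{zeng-defnEk}-analogue then yields $\mathscr E_N^k\le\mathscr E_N^{k-1}$ for $k\ge2$, and the case $k=1$ follows with the convention $\sum_{n=1}^0=0$ just as in the proof of Theorem \ref{zeng-thmSemiDisStab}. Iterating gives the stated chain $\mathscr E_N^k\le\cdots\le\mathscr E_N^0$.

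In short, the proof is a direct transcription of the homogeneous case ($\bm f^k=\bm g^k=\bm h^k=\bm 0$) of Theorem \ref{zeng-thmSemiDisStab} into the concrete two-dimensional spectral setting, and indeed the excerpt already announces it as ``a direct consequence of Theorem \ref{zeng-thmSemiDisStab}.'' The only genuinely new verification — and hence the one I would state explicitly rather than defer — is the skew-adjointness of the discrete curl pair on $\bm V_N^0\times V_N$; everything else (Cauchy--Schwarz, the weight monotonicity from Lemma \ref{lemdisker}, the telescoping \eqref{omegak0}, the $k=1$ bookkeeping) is identical to what was done before and can be cited rather than reproduced. I therefore anticipate no real obstacle, only the need to point out that the spectral spaces were chosen to make the integration-by-parts identity exact, so that no spatial discretisation error enters the stability estimate.
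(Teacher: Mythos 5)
Your proposal is correct and is essentially the paper's own argument: the paper offers no separate proof, stating the result as a direct consequence of Theorem \ref{zeng-thmSemiDisStab} (the homogeneous case $\bm f^k=\bm g^k=\bm h^k=\bm 0$), exactly as you transcribe it into the conforming spectral spaces, where the only new ingredient is the exact curl duality on $\bm V_N^0\times V_N$. One bookkeeping caveat: with the signs in \eqref{hntebelg1}--\eqref{hntebelg2} the cancellation needs $\left({\rm {\bf {curl}}}\,H_N^k,{\bm E}_N^k\right)=\left({\rm curl}\,{\bm E}_N^k,H_N^k\right)$ (valid for the standard scalar curl $\partial_x E_y-\partial_y E_x$ and ${\bm E}_N^k\in\bm V_N^0$), rather than the sum-form identity you wrote; this traces back to the sign convention in \eqref{hnte1} and does not affect the substance of your argument.
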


Following the argument for proving Theorem \ref{thmsemicon}, we can show the convergence. To this end, we sketch the proof with an emphasis on the estimation of spatial error.

  Let ${\bm E}_{*}=(E_{x*}, E_{y*})^T\in {\bm V}_N^0, {\bm P}_{*}=(P_{x*}, P_{y*})^T \in {\bm V}_N^0$, and  $H_*\in V_N$ be some suitable orthogonal projections to be specified later. We introduce
\begin{equation*}
\begin{split}
& {\bm e}_1^k = {\bm E}_N^k-{\bm E}_{*}^k,\quad\;  {e}_2^k = H_N^k-H_{*}^k,\;
\quad  {\bm e}_3^k = {\bm P}_N^k-{\bm P}_{*}^k,\\
&  {\bm \eta}_1^k = {\bm E}|_{t_k}-{\bm E}_{*}^k,\quad {\eta}_2^k = H|_{t_k}-H_{*}^k,  \quad {\bm \eta}_3^k = {\bm P}|_{t_k}-{\bm P}_{*}^k.
\end{split}
\end{equation*}
%
We infer from  \eqref{hnte}-\eqref{hntebelg} the error equations:
\begin{subequations}\label{errorequ}
\begin{align}
& \epsilon_{\infty} \left(\delta_t {\bm e}_{1}^k, {\bm \phi}\right)+
\left(\delta_t {\bm e}_{3}^k, {\bm \phi}\right)-\left({e}_2^k, {\rm curl}\,{\bm \phi}\right)=\left(\delta_t {\bm \eta}_{3}^k, {\bm \phi}\right)+\left({\bm f}_1^k, {\bm \phi}\right),\label{errorequ1} \\[4pt]
&\left(\delta_t e_{2}^k, \psi\right)+\left({\rm curl}~{{\bm e}}_{1}^k, \psi\right)=(f_2^k,\psi),  \label{errorequ2}
\\& \left({\bm e}_{3}^k, {\bm \varphi}\right)={\Delta \epsilon}\sum_{j=1}^{k}\varpi^{(\alpha,\beta)}_{k-j}\big({\bm e}_{1}^j, {\bm \varphi}\big)+\big({\bm \eta}_{3}^k, {\bm \varphi}\big)-{\Delta \epsilon}\left({\bm f}_3^k, {\bm \varphi}\right), \label{errorequ3}
\end{align}
\end{subequations}
where
\begin{eqnarray}
{\bm f}_1^k := {\bm R}^k_1+\epsilon_{\infty}\delta_t {\bm \eta}_{1}^k -{\rm {\bf {curl}}}~{\eta}_2^k,\quad f_2^k := {R}^k_2+\delta_t {\eta}_{2}^k+{\rm curl}~{{\bm \eta}}_{1}^k,\quad {\bm f}_3^k := \sum_{j=1}^{k}\varpi^{(\alpha,\beta)}_{k-j}{\bm \eta}_1^j+{\bm R}^k_0. 
\end{eqnarray}
Here,  ${\bm R}^k_0$, ${\bm R}^k_1$, and ${ R}^k_2$ are defined in  \eqref{TruncErr0} and \eqref{RkdefnA} with reduction  to  the two-dimensional setting.
Like \eqref{wbnd1}, we can derive
\begin{equation*}
\begin{split}
\|{\bm e}_{1}^k\|^2+\|e_{2}^k\|^2
\le   C  \Big(\|{\bm e}_{1}^0\|^2+\|e_{2}^0\|^2+{\Delta t}\sum_{j=1}^k \big(\|{\bm f}_1^j\|^2
+ (\Delta \epsilon)^2  \|\delta_t {\bm f}_3^j \|^2+ \|f_2^j\|^2\big) \Big),
\end{split}
\end{equation*}
and similar to the proof of Corollary \ref{corSemiDisStab}, we can obtain
\begin{eqnarray*}
\|{\bm e}_{3}^k\|
\le C 
\max_{1\leq j \leq k}\|{\bm e}_{1}^j\|+ \|{\bm \eta}_{3}^k\| +{\Delta \epsilon}\|{\bm f}_{3}^k\|.
\end{eqnarray*}
Recall that Lemma \ref{lem3-3} provides the error bounds of  ${\bm R}^k_0$, ${\bm R}^k_1$, and ${ R}^k_2$, so it suffices to estimate the errors involving ${\bm \eta}_1^k, {\eta}_2^k$ and ${\bm \eta}_3^k.$ We first deal with the summation in ${\bm f}_{3}^k.$ Following the same lines as  deriving the last estimate in Lemma \ref{lem3-3}, one has
\begin{equation}
\label{temp0}
\begin{split}
& \Big\|\delta_t \Big(\sum_{i=1}^{j}\varpi^{(\alpha,\beta)}_{j-i}{\bm \eta}_1^i\Big)\Big\| =\frac{1}{\Delta t}\Big\|\sum_{i=1}^{j}\varpi^{(\alpha,\beta)}_{j-i}{\bm \eta}_{1}^i-\sum_{i=1}^{j-1}\varpi^{(\alpha,\beta)}_{j-1-i}{\bm \eta}_{1}^i\Big\|\\
& \le \frac{1}{\Delta t}\Big\|\sum_{i=1}^{j} \int_{t_{i-1}}^{t_i} e_{\alpha,\alpha\beta}^{\beta}(t_{j}-s;-1){\bm \eta}_{1}^{i}\,{\rm d}s-\sum_{i=1}^{j-1} \int_{t_{i-1}}^{t_i} e_{\alpha,\alpha\beta}^{\beta}(t_{j-1}-s;-1){\bm \eta}_{1}^{i}\,{\rm d}s \Big\|\\
&= \frac{1}{\Delta t}\Big\|\int_{0}^{t_1} e_{\alpha,\alpha\beta}^{\beta}(t_{j}-s;-1){\bm \eta}_{1}^{1}\,{\rm d}s+\sum_{i=1}^{j-1} \int_{t_{i-1}}^{t_i} e_{\alpha,\alpha\beta}^{\beta}(t_{j-1}-s;-1)({\bm \eta}_{1}^{i+1}-{\bm \eta}_{1}^{i})\,{\rm d}s \Big\| \\
&\le e_{\alpha,\alpha\beta}^{\beta}(t_{j-1};-1)\|{\bm \eta}_{1}^{1}\|+\sum_{i=1}^{j-1} \int_{t_{i-1}}^{t_i} e_{\alpha,\alpha\beta}^{\beta}(t_{j-1}-s;-1)\|\delta_t{\bm \eta}_{1}^{i+1}\|\,{\rm d}s.
\end{split}
\end{equation}

 We proceed with introducing some   orthogonal projections, and review the relevant  approximation results in \cite{shen2011spectral}.  Let $\pi_{N,x}^{1}: H^1(I_x) \to {\mathbb P}_N$ be the $H^1$-orthogonal projection, and  let $\pi_{N,x}^{1,0}: H^1_0(I_x) \to {\mathbb P}_N^0$
be the $H^1_0$-orthogonal projection. Likewise,  we can define the operators $\pi_{N,y}^{1}$ and $\pi_{N,y}^{1,0}$ on the interval $I_y.$ Here we choose
\begin{equation*}
\{E_{x*}; P_{x*}\}=(\pi_{N,x}^{1}\circ \pi_{N,y}^{1,0})\{E_x; P_{x}\}, \;\; \{E_{y*}; P_{y*}\}=(\pi_{N,x}^{1,0}\circ \pi_{N,x}^{1}) \{E_y; P_y\}, \;\; H_*= (\pi_{N,x}^{1}\circ \pi_{N,y}^{1}) H.
\end{equation*}
According to \cite{shen2011spectral}, we have
\begin{equation}\label{ProjErr}
\|U_{x*}-U_x\|_{H^s(\Omega)}\le  cN^{s-r}\|U_x\|_{H^r(\Omega)},  \;\; \|U_{y*}-U_y\|_{H^s(\Omega)} \leq cN^{s-r} \|U_y\|_{H^r(\Omega)},\;\;  s=0,1,\;\; r\ge 1,
\end{equation}
and
\begin{equation}\label{InterpErr}
\|U-\mathcal{I}_N U\| \leq cN^{-r} \|U\|_{H^r(\Omega)},\;\;\; r\ge 1.
\end{equation}
Below, we shall set $U$ to be the unknowns.
Now we are in a position to give the error estimates involving ${\bm \eta}_1^k, {\eta}_2^k$ and ${\bm \eta}_3^k.$ From \eqref{ProjErr}, we have
\begin{eqnarray*}
\|{\bm \eta}_{1}^j\|\leq cN^{-r}\left(\|E_x(\cdot, t_j)\|_{H^r(\Omega)}+\|E_y(\cdot,t_j)\|_{H^r(\Omega)}\right) \leq
cN^{-r}\|{\bm E}\|_{L^{\infty}(0,T;{\bm H}^r(\Omega))},
\end{eqnarray*}
\begin{eqnarray*}
\|{\bm \eta}_{3}^k\|\leq cN^{-r}\left(\|P_x(\cdot,t_k)\|_{H^r(\Omega)}+\|P_y(\cdot,t_k)\|_{H^r(\Omega)}\right)\leq cN^{-r}\|{\bm P}\|_{L^{\infty}(0,T;{\bm H}^r(\Omega))},
\end{eqnarray*}
\begin{eqnarray*}
\|\delta_t {\bm \eta}_{1}^k\|\leq c \|\partial_t {\bm \eta}_{1}^k\| \leq cN^{-r}\left(\|\partial_t E_x(\cdot,t_k)\|_{H^r(\Omega)}+\|\partial_t E_y(\cdot,t_k)\|_{H^r(\Omega)}\right)
\leq cN^{-r}\|\partial_t {\bm E}\|_{L^{\infty}(0,T;{\bm H}^r(\Omega))},
\end{eqnarray*}
\begin{equation*}
\begin{split}
\|{\rm curl}~{{\bm \eta}}_{1}^k\|
&  =   \|\partial_y(E_x-E_{x*})-\partial_x(E_y-E_{y*})\|
\leq \|\partial_y(E_x-E_{x*})\|+\|\partial_x(E_y-E_{y*})\| \\
&\leq cN^{1-r}\left(\|E_x(\cdot,t_k)\|_{H^r(\Omega)}+\|E_y(\cdot,t_k)\|_{H^r(\Omega)}\right)\leq cN^{(1-r)}\|{\bm E}\|_{L^{\infty}(0,T;{\bm H}^r(\Omega))},
\end{split}
\end{equation*}
\begin{eqnarray*}
\|\delta_t {\eta}_{2}^k\|\leq c \|\partial_t {\eta}_{2}^k\| \leq cN^{-r}\|\partial_t H(\cdot,t_k)\|_{H^r(\Omega)}\leq cN^{-r}\|\partial_t H\|_{L^{\infty}(0,T;{H}^r(\Omega))},
\end{eqnarray*}
and
\begin{eqnarray*}
\|{\rm {\bf {curl}}}~{\eta}_2^k\| = \|\partial_x {\eta}_2^k\|+\|\partial_y {\eta}_2^k\|\leq cN^{(1-r)}\|H^k\|_{H^r(\Omega)}\leq cN^{(1-r)}\|H\|_{L^{\infty}(0,T;{H}^r(\Omega))}.
\end{eqnarray*}
Using  the triangular inequality  and the approximation results \eqref{ProjErr}-\eqref{InterpErr}, we obtain
\begin{eqnarray*}
\|{\bm e}_{1}^0\|\le cN^{-r}\|{\bm E}_0\|_{{\bm H}^r(\Omega)}, \quad \|e_{2}^0\|\leq cN^{-r}\|H_0\|_{H^r(\Omega)},
\end{eqnarray*}
for the initial errors. Collecting all the estimates above, and noting \eqref{temp0}, we present the following convergence result.
\begin{thm}\label{thmcon} 
Let ${\bm E}_{N}^k, {\bm P}_{N}^k, H_N^k$ be the solution of \eqref{hntebelg} that approximates the solution  of  \eqref{hnte}.
 Assume
$$
E_x,P_x \in H^2\big(0,T;H^r(\Omega)\cap \big(H^{1}(I_x)\otimes H_0^{1}(I_y)\big)\big),~E_y, P_y \in H^2\big(0,T;H^r(\Omega)\cap \big(H_0^{1}(I_x)\otimes H^{1}(I_y)\big)\big),
$$
and $H \in H^2\left(0,T;H^r(\Omega)\right)$, then for $k\ge 1,$
\begin{equation*}
\begin{split}
&  \|{\bm E}(\cdot, t_k)-{\bm E}_{N}^k\|^2+\|H(\cdot, t_k)-{H}_{N}^k\|^2+\|{\bm P}(\cdot, t_k)-{\bm P}_{N}^k\|^2 \\
& \leq C(\Delta t)^2\big(\|{\bm E}\|^2_{H^2(0,T;{\bm L}^2(\Omega))}+\|{\bm P}\|^2_{H^2(0,T;{\bm L}^2(\Omega))}+\|H\|^2_{H^2(0,T;L^2(\Omega))} \big)\\
& ~+ CN^{2(1-r)}\big(N^{-2}\|{\bm E}_{0}\|^2_{{\bm H}^r(\Omega)}+N^{-2}\|H_0\|^2_{{H}^r(\Omega)}+N^{-2}\|\partial_t {\bm E}\|^2_{L^{\infty}(0,T,{\bm H}^r(\Omega))}+N^{-2}\|\partial_t H\|^2_{L^{\infty}(0,T;{H}^r(\Omega))} \\
& ~+N^{-2}\|{\bm E}\|^2_{L^{\infty}(0,T,{\bm H}^r(\Omega))}+N^{-2}\|{\bm P}\|^2_{L^{\infty}(0,T;{\bm H}^r(\Omega))}+\|H\|^2_{L^{\infty}(0,T;{H}^r(\Omega))}+\|{\bm E}\|^2_{L^{\infty}(0,T;{\bm H}^r(\Omega))}\big),
\end{split}
\end{equation*}
for a suitable $\Delta t$ {\rm(}see Theorem \ref{zeng-thmSemiDisStab} and Remark \ref{exp-const}{\rm).}
Here,  $C$ is a positive constant independent of   ${\Delta t},N$ and any function.
\end{thm}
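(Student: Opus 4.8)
The plan is to apply the semi-discrete stability estimate of Theorem \ref{zeng-thmSemiDisStab} directly to the error system \eqref{errorequ}, with the nonhomogeneous data $\{\bm f^k, \bm g^k, \bm h^k\}$ of \eqref{39eqn2} replaced by $\{\bm f_1^k, f_2^k, \bm f_3^k\}$, and then to split each of these data into a temporal truncation part and a spatial projection part. First I would observe that \eqref{errorequ} is precisely the two-dimensional reduction of \eqref{39eqn2} under the correspondence $\bm E^k \leftrightarrow \bm e_1^k$, $\bm H^k \leftrightarrow e_2^k$, $\bm P^k \leftrightarrow \bm e_3^k$, $\bm f^k \leftrightarrow \bm f_1^k + \delta_t \bm\eta_3^k$, $\bm g^k \leftrightarrow f_2^k$, $\bm h^k \leftrightarrow \bm f_3^k$, but now with non-vanishing initial data $\bm e_1^0, e_2^0$ arising from the LGL interpolation of $\bm E_0, H_0$. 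Applying Theorem \ref{zeng-thmSemiDisStab} with $\varrho=1$ and $\Delta t < 1 - c^*$ (so that the discrete Gr\"onwall constant is bounded independently of $\Delta t$, as in Remark \ref{exp-const}) yields the bound on $\|\bm e_1^k\|^2 + \|e_2^k\|^2$ displayed before the theorem, and the Corollary \ref{corSemiDisStab}-type argument applied to \eqref{errorequ3} gives the stated bound on $\|\bm e_3^k\|$.

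Next I would estimate each of $\bm f_1^j, f_2^j, \bm f_3^j$ term by term. The temporal truncation contributions $\bm R_1^k, R_2^k$ and $\bm R_0^k, \delta_t \bm R_0^k$ are controlled by Lemma \ref{lem3-3}; the spatial projection contributions $\bm\eta_1^k, \eta_2^k, \bm\eta_3^k$, together with $\delta_t \bm\eta_1^k$, $\delta_t \eta_2^k$, ${\rm curl}\,\bm\eta_1^k$ and ${\rm {\bf {curl}}}\,\eta_2^k$, are controlled by the $H^1$- and $H^1_0$-projection estimates \eqref{ProjErr}--\eqref{InterpErr}, exactly as in the displays preceding the theorem statement. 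The one genuinely nonroutine piece is $\delta_t \bm f_3^j$, specifically the convolution term $\delta_t\big(\sum_{i=1}^{j}\varpi^{(\alpha,\beta)}_{j-i}\bm\eta_1^i\big)$: here I would invoke the Abel-type rearrangement \eqref{temp0}, bound the single initial contribution by the boundedness of the ML function (Lemma \ref{lemBdker}) together with $\|\bm\eta_1^1\| \le cN^{-r}\|\bm E\|_{L^\infty(0,T;\bm H^r(\Omega))}$, and bound the remaining sum by $\sum_{i=1}^{j-1}\int_{t_{i-1}}^{t_i} e_{\alpha,\alpha\beta}^\beta(t_{j-1}-s;-1)\,\|\delta_t\bm\eta_1^{i+1}\|\,{\rm d}s \le c\, e_{\alpha,\alpha\beta+1}^\beta(t_{j-1};-1)\,N^{-r}\|\partial_t\bm E\|_{L^\infty(0,T;\bm H^r(\Omega))}$, which is finite on $[0,T]$ again by Lemma \ref{lemBdker} and the complete monotonicity of $e_{\alpha,\alpha\beta}^\beta(\cdot;-1)$.

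Finally I would pass from the auxiliary quantities back to the true errors via the triangle inequality, $\|\bm E(\cdot,t_k) - \bm E_N^k\| \le \|\bm e_1^k\| + \|\bm\eta_1^k\|$ and similarly for $H$ and $\bm P$, absorb the projection errors $\|\bm\eta_1^k\|, \|\eta_2^k\|, \|\bm\eta_3^k\|$ into the $N^{-r}$-type bounds, and collect all the contributions to obtain the stated estimate. I expect the main obstacle to be the uniform (in $j$ and $\Delta t$) treatment of the convolution-difference term $\delta_t \bm f_3^j$, since this is the only place where the nonlocal H-N kernel interacts with the spectral projection error and requires the summation-by-parts identity \eqref{temp0} in tandem with Lemma \ref{lemBdker}; the remainder is a mechanical assembly of Theorem \ref{zeng-thmSemiDisStab}, Corollary \ref{corSemiDisStab}, Lemma \ref{lem3-3}, and the approximation results \eqref{ProjErr}--\eqref{InterpErr}.
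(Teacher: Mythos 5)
Your proposal is correct and follows essentially the same route as the paper: the same splitting via the projections ${\bm E}_*, {\bm P}_*, H_*$, application of the stability result of Theorem \ref{zeng-thmSemiDisStab} (with Remark \ref{exp-const}) to the error system \eqref{errorequ}, the Corollary \ref{corSemiDisStab}-type bound for ${\bm e}_3^k$, Lemma \ref{lem3-3} for the temporal truncation terms, the rearrangement \eqref{temp0} with Lemma \ref{lemBdker} for $\delta_t {\bm f}_3^j$, and the projection/interpolation estimates \eqref{ProjErr}--\eqref{InterpErr} plus the triangle inequality. The only minor imprecision is the identification ${\bm h}^k\leftrightarrow{\bm f}_3^k$: in \eqref{errorequ3} the datum is really ${\bm \eta}_3^k/\Delta\epsilon-{\bm f}_3^k$, and it is precisely the cancellation of the $\delta_t{\bm \eta}_3^k$ contributions from \eqref{errorequ1} and \eqref{errorequ3} that leaves only $\|{\bm f}_1^j\|^2+(\Delta\epsilon)^2\|\delta_t{\bm f}_3^j\|^2+\|f_2^j\|^2$ in the Gr\"onwall bound --- exactly the displayed estimate you invoke, so no $\partial_t{\bm P}$ projection term is needed.
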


\subsection{Numerical results}\label{subsect43}

In this subsection, we provide ample numerical results to show the efficiency and accuracy of the proposed methods with a focus on the performance of the treatment in time discretisation.
\subsubsection{Accuracy and efficiency tests} \label{subsubsect431}
Consider the system \eqref{hnte} with  the exact solution:
\begin{equation*}
\begin{split}
& {\bm E}(x,y,t)=\frac{t^{4}}{\Gamma(5)}{\bm w}(x,y),\;\;  {\bm P}(x,y,t)={\Delta\epsilon}\,e^{\beta}_{\alpha,\alpha\beta+5}(t;-1)\,{\bm w}(x,y),\;\; {\bm w}(x,y)=\binom{-\cos(\pi x)\sin(\pi y)}{\quad\sin(\pi x)\cos(\pi y)},\\
&H(x,y,t)=\Big(\frac{4\epsilon_{\infty}}{\pi\Gamma(5)}t^{3}
+{\Delta\epsilon}\, e^{\beta}_{\alpha,\alpha\beta+4}(t;-1)\Big)\cos(\pi x)\cos(\pi y).
\end{split}
\end{equation*}
 As such,  the second equation in \eqref{hnte} must have  a source term
$$ f(x,y,t)=\bigg(\frac{2\pi }{\Gamma(5)}t^4+\frac{12\epsilon_{\infty}}{\pi\Gamma(5)}t^{2}+\frac{\Delta \epsilon }{\pi}e^{\beta}_{\alpha,\alpha\beta+3}(t;-1)\bigg)\cos(\pi x)\cos(\pi y),$$
which one can verify  by using the formulas in  \cite[(2.10) and (2.26)]{kilbas2004generalized}.

For notational simplicity, we denote by $U_{N,D}^k$ and $U_{N,F}^k$  the numerical solutions derived by the direct and fast algorithms at $t_k=k{\Delta t}.$ Correspondingly, we denote the discrete $L^2$-errors by  ${\rm ErrU_{F}}:=\| U(\cdot, t_k)-U_{N,F}^{k}\|_{N}$ and ${\rm ErrU_{\rm DF}}:=\| U_{N,D}^{k}-U_{N,F}^{k}\|_{N},$  respectively, where $U$ can be ${\bm E}$, $H$ or ${\bm P}$.
In the following tests, we take $\Omega=(-1,1)^2$ and $\epsilon_{\infty}=\Delta \epsilon=1$.

\begin{table}[!th]
\centering\small
\caption{Errors and convergence rates of the fast temporal convolution algorithm.}\label{Tab1}
\vspace*{-6pt}
\begin{tabular}{|c|c|c|c|c|c|c|c|c|c|c|c|}
\hline
\cline{1-1}
${\Delta t}$   & ${\rm ErrE_F}$ & ${\rm Order}$ & ${\rm ErrH_F}$ & ${\rm Order}$ & ${\rm ErrP_F}$ & ${\rm Order}$ & ${\rm ErrE_{DF}}$ &${\rm ErrH_{DF}}$ &${\rm ErrP_{DF}}$\\ \hline\hline
$2^{-4}$  &6.4914e-03 &  -   &2.0905e-03 &  -   &6.1861e-03  &  -   &2.8917e-16 &1.4197e-16 &2.5075e-16 \\\hline
$2^{-6}$  &1.6836e-03 & 0.99 &4.0510e-04 & 1.17 &1.7062e-03  & 0.94 &3.6776e-16 &1.7787e-16 &1.4391e-16 \\\hline
$2^{-8}$  &4.1803e-04 & 1.01 &8.3436e-05 & 1.13 &4.5249e-04  & 0.96 &2.7708e-16 &4.3866e-16 &5.4712e-17 \\\hline
$2^{-10}$ &1.0307e-04 & 1.01 &1.7985e-05 & 1.10 &1.1771e-04  & 0.97 &3.8889e-16 &5.4694e-16 &5.9642e-17 \\\hline
$2^{-12}$ &2.5458e-05 & 1.01 &4.0045e-06 & 1.08 &3.0253e-05  & 0.98 &1.6182e-15 &1.8799e-16 &1.3962e-15 \\\hline
$2^{-14}$ &6.3060e-06 & 1.01 &9.1495e-07 & 1.06 &7.7104e-06  & 0.99 &4.6538e-16 &3.1352e-15 &1.2561e-15 \\\hline
\end{tabular}
\end{table}

Firstly, in Table \ref{Tab1},  we tabulate  the discrete $L^2$-errors between the exact  and  numerical solutions, together with  convergence orders,  obtained by the schemes with $\alpha=\beta=0.5$ and $N=50$  at $T=1$.  In the rightmost three columns,
we list the errors between the numerical solutions by direct and fast algorithms (with $N_{\rm col}=30$), which are apparently  negligible.     We also observe that the first-order convergence  as expected.

\begin{wrapfigure}{r}{0.5\textwidth}
 \begin{center}
 \includegraphics[width=0.43\textwidth]{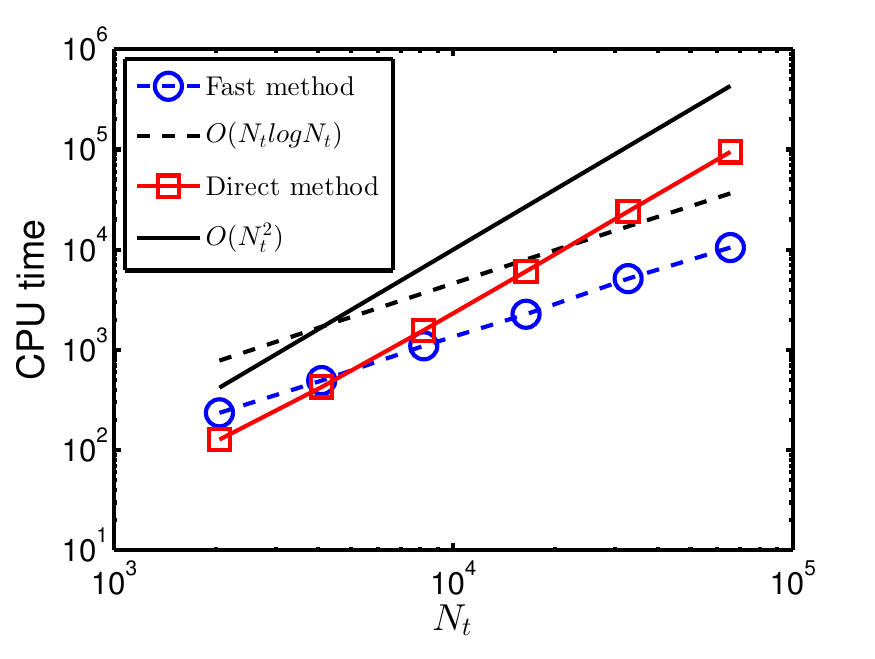}
\caption{\small Direct  versus  fast  algorithms in log-log scale} \label{fig1}
 \end{center}
\end{wrapfigure}


Secondly, we compare in Figure \ref{fig1} the computational time in seconds against $N_t$ between the direct and fast convolution algorithms  with $\alpha=\beta=0.5$, $N=50, N_{\rm col}=30$ and  with  different  $\Delta t$ at $T=1$. Note that the fast convolution algorithm requires $O(N_t \log N_t)$ operations over $N_t$ time steps, while the direct algorithm requires $O(N_t^2)$ operations. As such, much saving can be achieved by using the fast convolution algorithm which is therefore necessary for long time simulation.

Thirdly, we depict in Figures \ref{fig2}-\ref{fig3}  the convergence rates in both time and space   with different parameters  $\alpha,\beta$. As expected, we observe from Figures \ref{fig2}  the first-order convergence order in time, while from Figures \ref{fig3} the spectral accuracy in space (given the spatial smooth exact solution). Here, we understand $O(\Delta t)=0.5\Delta t.$ For the latter, we choose $\Delta t=0.00001$ so that we can demonstrate the spatial errors.  Indeed, the numerics confirm the convergence $O(\Delta t+e^{-cN})$ for some $c>0$.
\begin{figure}[!th]
\subfigure[$\alpha=0.3, \beta=0.3$]{
\begin{minipage}[t]{0.42\textwidth}
\centering
\rotatebox[origin=cc]{-0}{\includegraphics[width=1\textwidth]{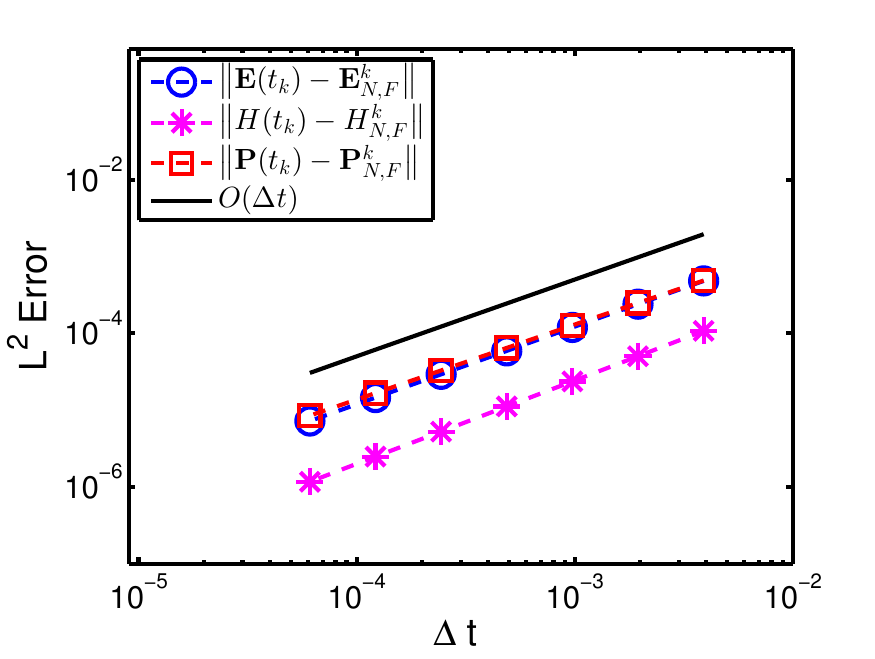}}
\end{minipage}} \quad
\subfigure[$\alpha=0.3, \beta=0.7$]{
\begin{minipage}[t]{0.42\textwidth}
\centering
\rotatebox[origin=cc]{-0}{\includegraphics[width=1\textwidth]{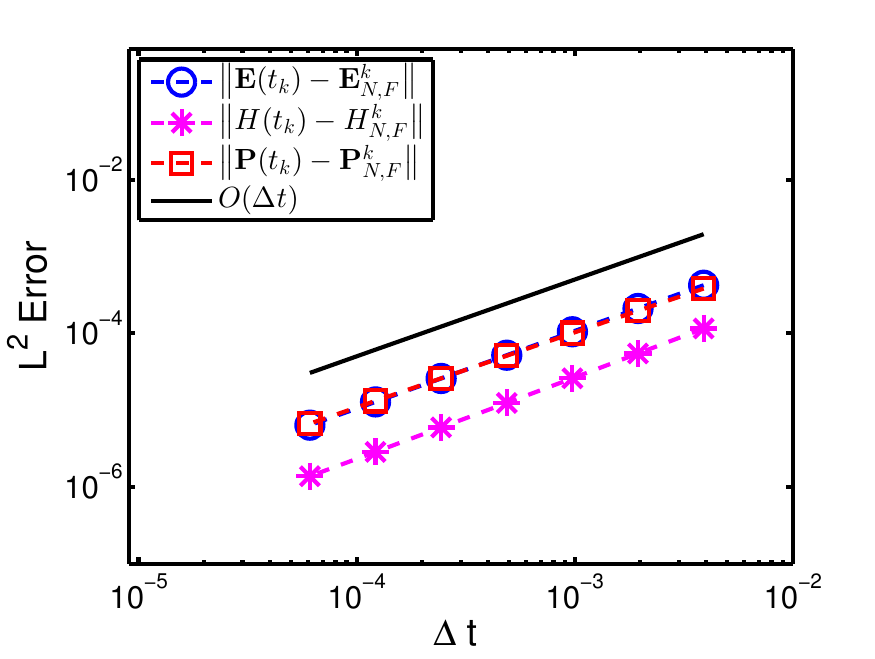}}
\end{minipage}}
\subfigure[$\alpha=0.7, \beta=0.3$]{
\begin{minipage}[t]{0.42\textwidth}
\centering
\rotatebox[origin=cc]{-0}{\includegraphics[width=1\textwidth]{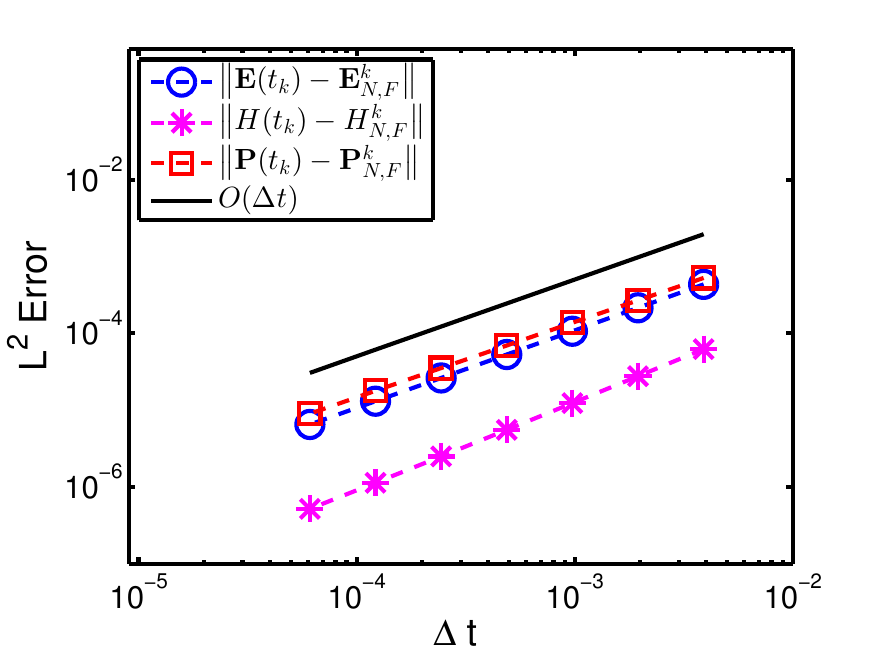}}
\end{minipage}}\quad
\subfigure[$\alpha=0.7, \beta=0.7$]{
\begin{minipage}[t]{0.42\textwidth}
\centering
\rotatebox[origin=cc]{-0}{\includegraphics[width=1\textwidth]{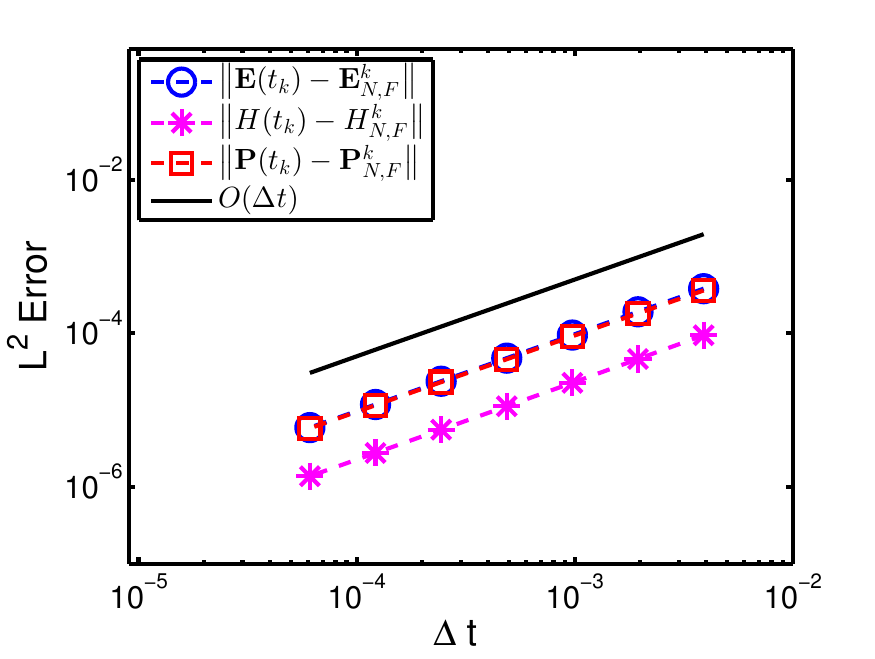}}
\end{minipage}}
\caption
{\small  Convergence order in time with $N=50$, $N_{\rm col}=30$ and different parameters $\alpha, \beta$ in log-log scale.}\label{fig2}
\end{figure}
\begin{figure}[!th]
\subfigure[$\alpha=0.3, \beta=0.6$]{
\begin{minipage}[t]{0.42\textwidth}
\centering
\rotatebox[origin=cc]{-0}{\includegraphics[width=1\textwidth]{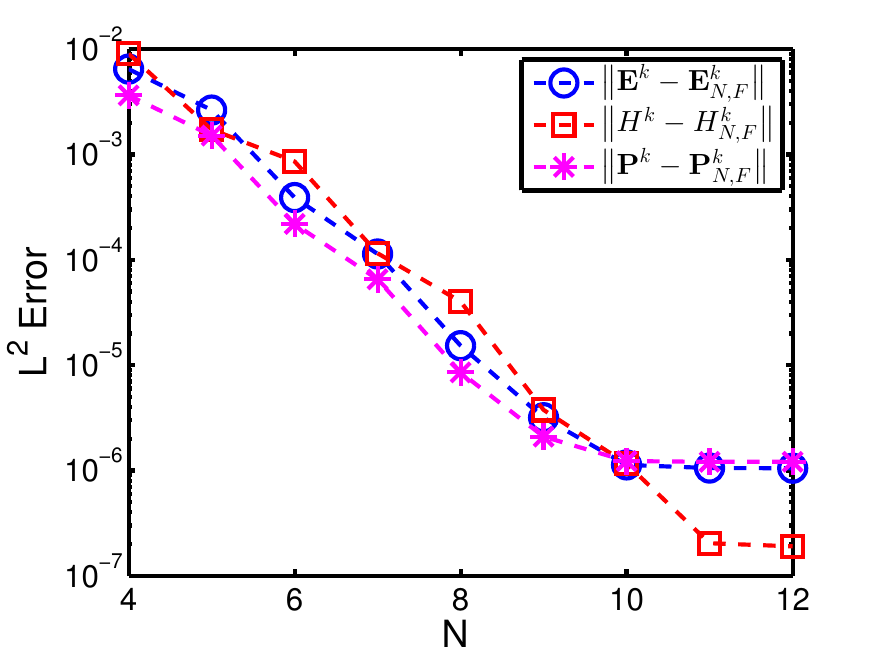}}
\end{minipage}}\quad
\subfigure[$\alpha=0.6, \beta=0.3$]{
\begin{minipage}[t]{0.42\textwidth}
\centering
\rotatebox[origin=cc]{-0}{\includegraphics[width=1\textwidth]{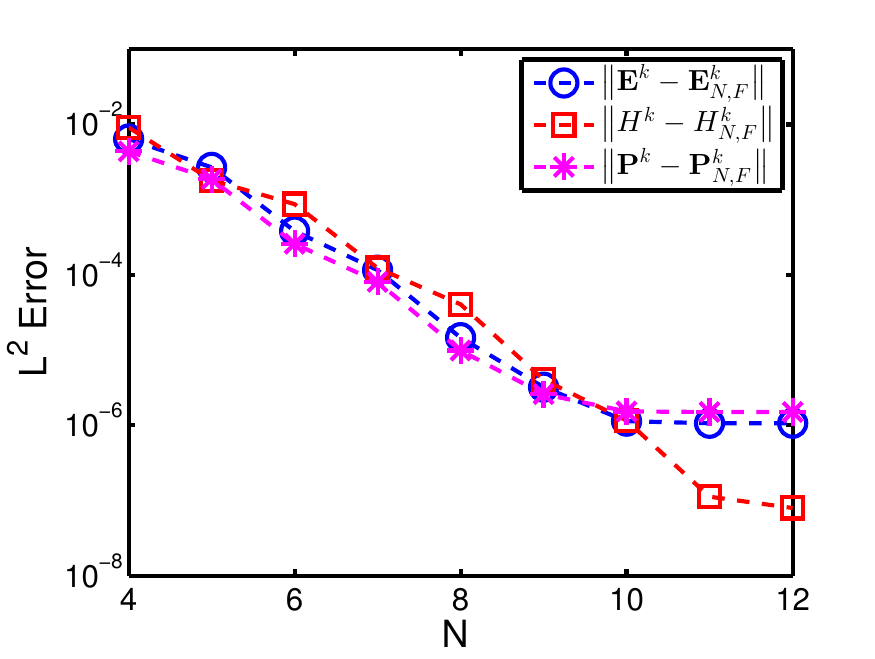}}
\end{minipage}}
\caption{\small Convergence behaviour  in space  for 
 different parameters $\alpha, \beta$ in semi-log scale.}
\label{fig3}
\end{figure}

Finally, we tabulate the discrete $L^{\infty}$-norm of the divergence of electric and polarisation fields with $\Delta t =0.001$, $N=50$ and different $\af, \bt$ at different times $t_k=k{\Delta t}$ in Table \ref{Tab2}, which  shows the scheme can preserve this property well.
\begin{table}[!th]
\centering\small 
\caption{Discrete $L^{\infty}$-norm of the divergence of electric and polarisation fields.}\label{Tab2}
\vspace*{-6pt}
\begin{tabular}{|c|c|c|c|c|c|c|}
\hline
\multirow{2}*{$k$} & \multicolumn{2}{c|}{$\af=0.3,\;\; \bt=0.7$} &  \multicolumn{2}{c|}{$\af=0.5,\;\; \bt=0.5$} &  \multicolumn{2}{c|}{$\af=0.7,\;\; \bt=0.3$} \\
\cline{2-7}
           & $\|\nabla\cdot {\bm E}^k\|_{\infty}$ & $\|\nabla\cdot {\bm P}^k\|_{\infty}$ & $\|\nabla\cdot {\bm E}^k\|_{\infty}$ & $\|\nabla\cdot {\bm P}^k\|_{\infty}$ & $\|\nabla\cdot {\bm E}^k\|_{\infty}$ & $\|\nabla\cdot {\bm P}^k\|_{\infty}$ \\
\hline\hline
200   & 1.3034e-16 & 5.3398e-17 & 1.3084e-16 & 5.3862e-17 & 1.2869e-16 & 6.4280e-17 \\
\hline
400   & 2.2010e-15 & 1.0001e-15 & 2.2415e-15 & 1.0719e-15 & 2.2190e-15 & 1.2638e-15 \\
\hline
600   & 1.1832e-14 & 5.6432e-15 & 1.1797e-14 & 6.1230e-15 & 1.1630e-14 & 7.1018e-15 \\
\hline
800   & 3.9361e-14 & 1.9593e-14 & 3.9541e-14 & 2.1420e-14 & 3.8714e-14 & 2.4878e-14 \\
\hline
1000  & 1.0110e-13 & 5.1943e-14 & 1.0217e-13 & 5.7495e-14 & 9.9812e-14 & 6.5602e-14 \\
\hline
\end{tabular}
\end{table}

\subsubsection{Discrete energy decay} \label{subsubsect432}
In order to illustrate the discrete energy dissipation shown in Theorem \ref{thmFullyDisStab},
we set the initial values to be
$$E_x(x,y,0)=\;\frac{1}{\sqrt 2}\cos(\pi x)\sin(\pi y), \;\;
E_y(x,y,0)=-\frac{1}{\sqrt 2}\sin(\pi x)\cos(\pi y), \;\;  H(x,y,0)=0.$$
Note that  the system  must be homogeneous to possess such a property (see  \eqref{hntebelg}).  As a result,  we use  sufficiently
fine mesh grids to verify  the accuracy and convergence order as observed previously. Here, we record in Figure \ref{fig4} the evolution of the discrete energy $\mathscr E_{N}^k$ obtained by the scheme with $\Delta t=0.01$, $N=50$ and $N_{\rm col}=30$ for some different parameters $\alpha, \beta$. Indeed, these numerical evidences validate  this behaviour.
Interestingly, when it comes to the discrete analogue of the energy in Theorem \ref{thmConStab}: $\tilde {\mathscr E}_{N}^k:=\epsilon_{\infty}\|{\bm E}_N^{k}\|^2+ \|{H}_N^{k}\|^2,$ we observe from Figure \ref{fig5} that it fails to satisfy  this decaying property. Indeed, as shown  in Theorem \ref{thmConStab},  this energy  at continuous level can only  be controlled by the initial energy.  In fact, a similar behaviour has been observed for the Cole-Cole model in \cite{huang2018accurate}.


\begin{figure}[!th]
\subfigure{
\begin{minipage}[t]{0.42\textwidth}
\centering
\rotatebox[origin=cc]{-0}{\includegraphics[width=1\textwidth]{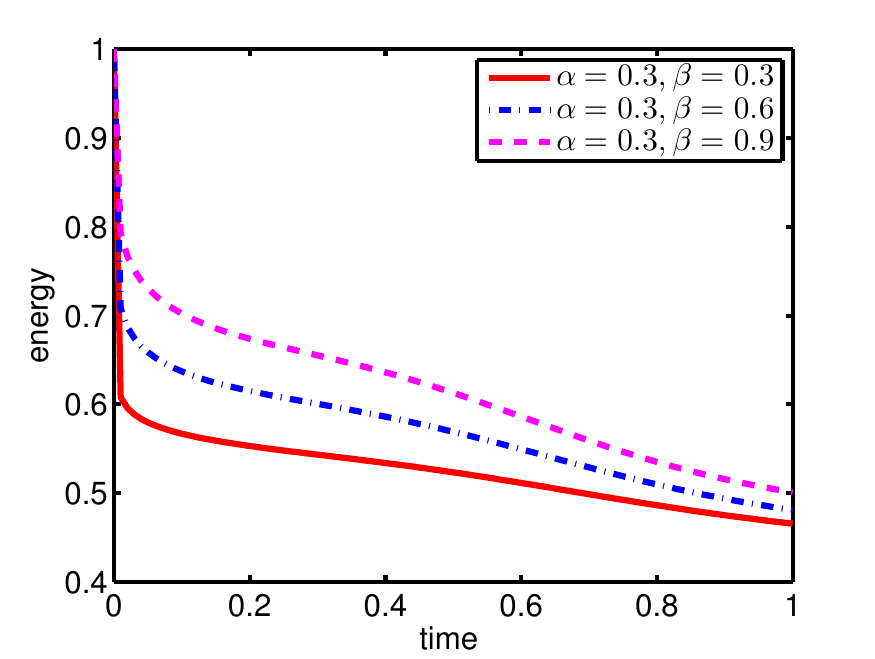}}
\end{minipage}}\quad
\subfigure{
\begin{minipage}[t]{0.42\textwidth}
\centering
\rotatebox[origin=cc]{-0}{\includegraphics[width=1\textwidth]{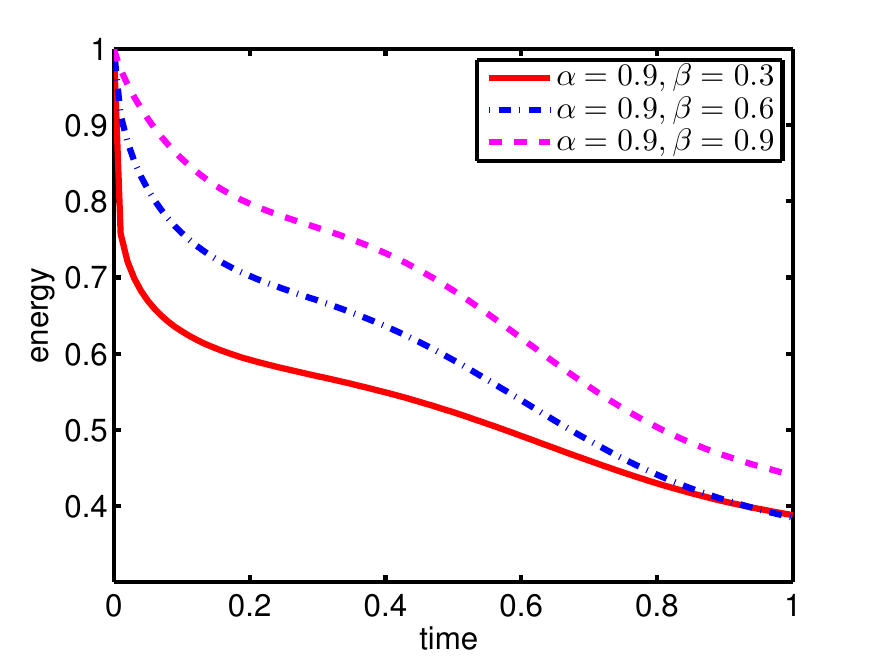}}
\end{minipage}}\quad
\subfigure{
\begin{minipage}[t]{0.42\textwidth}
\centering
\rotatebox[origin=cc]{-0}{\includegraphics[width=1\textwidth]{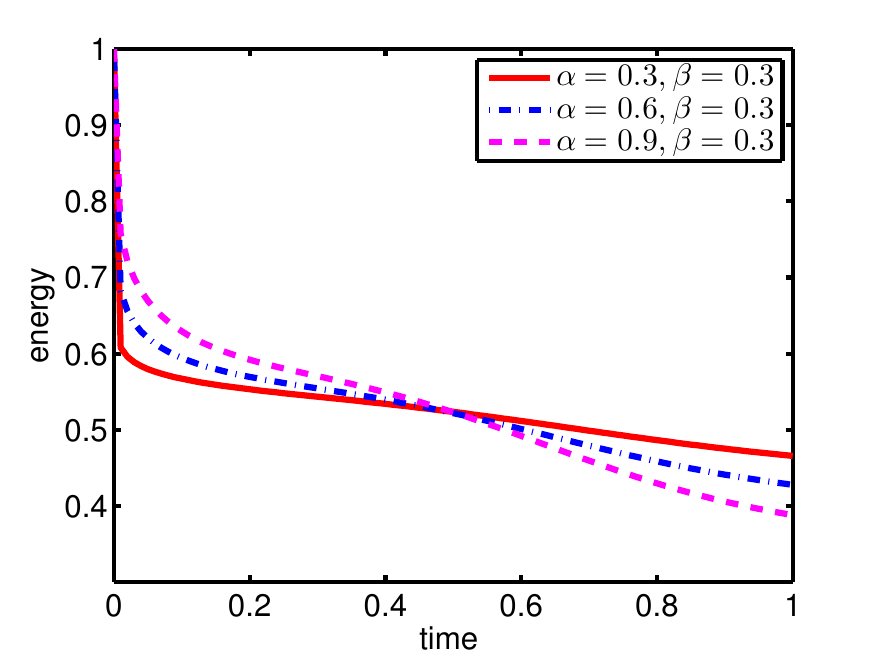}}
\end{minipage}}\quad
\subfigure{
\begin{minipage}[t]{0.42\textwidth}
\centering
\rotatebox[origin=cc]{-0}{\includegraphics[width=1\textwidth]{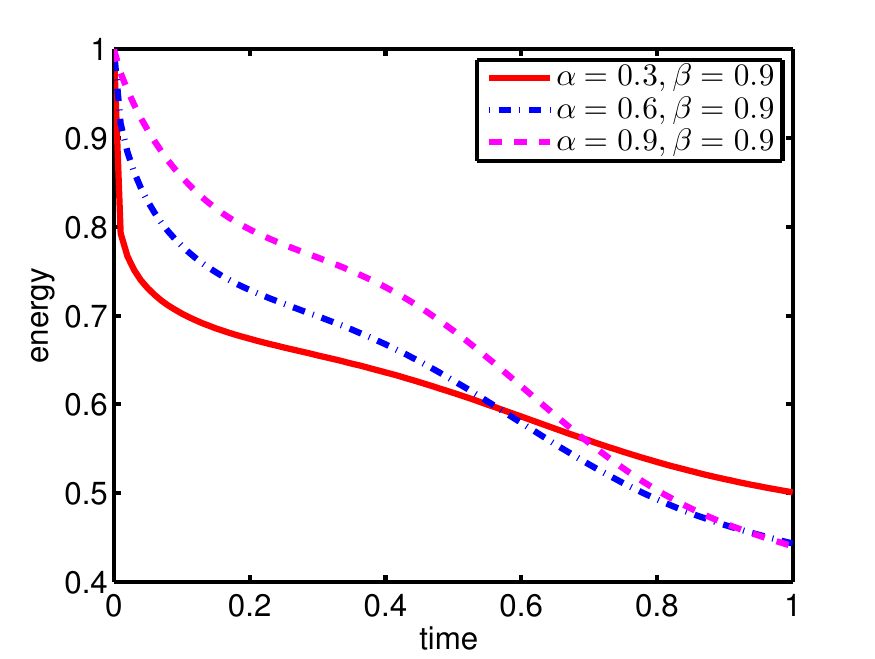}}
\end{minipage}}
\caption{\small Evolution of the discrete energy $\mathscr E_{N}^k$  with different $\alpha$ and $\beta$.}
\label{fig4}
\end{figure}


\begin{figure}[!th]
\subfigure{
\begin{minipage}[t]{0.42\textwidth}
\centering
\rotatebox[origin=cc]{-0}{\includegraphics[width=1\textwidth]{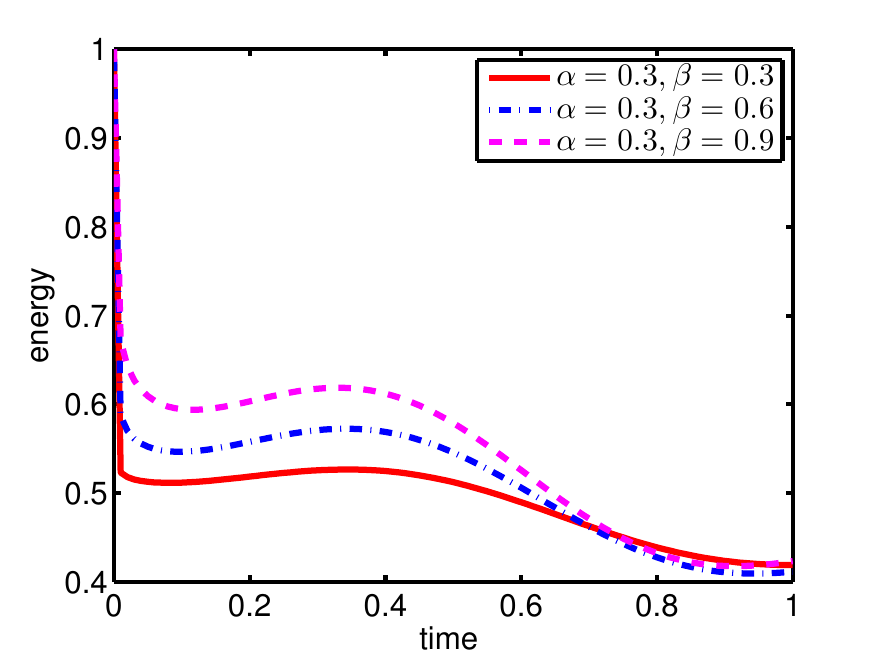}}
\end{minipage}}\quad
\subfigure{
\begin{minipage}[t]{0.42\textwidth}
\centering
\rotatebox[origin=cc]{-0}{\includegraphics[width=1\textwidth]{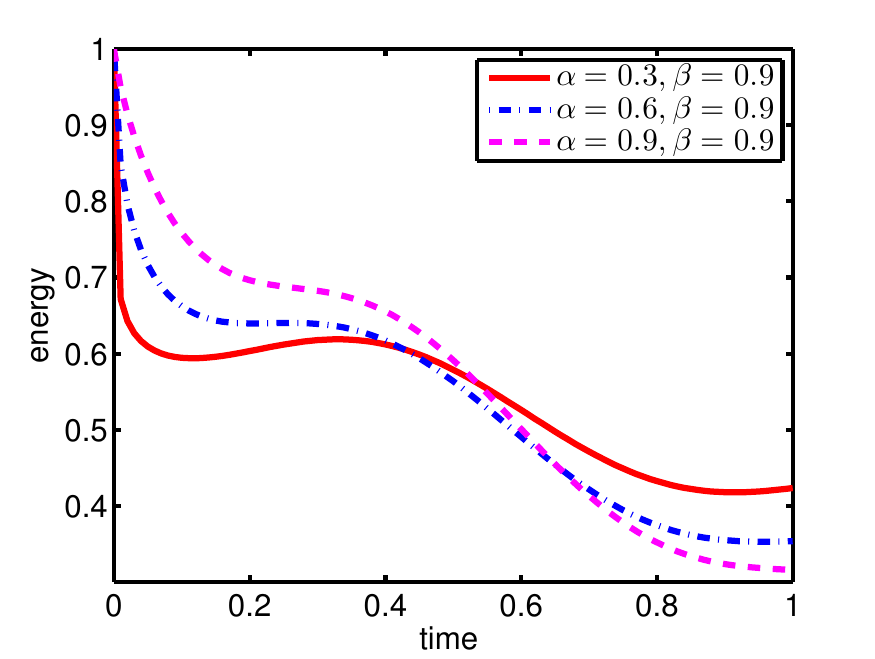}}
\end{minipage}}
\caption{\small Evolution of the discrete energy $\tilde {\mathscr E}_{N}^k$ with different $\alpha$ and $\beta$.}
\label{fig5}
\end{figure}

\subsection{Application: recovery of the relative permittivity, reflection coefficient and transfer function.} \label{subsect44}
As already mentioned in the introductory section,  the dispersive media in which the electromagnetic waves propagate, can be characterised by the relative permittivity:
\begin{equation}\label{hnperm2}
\epsilon_r(\omega)=\epsilon_{\infty}+\frac{\epsilon_s-\epsilon_{\infty}}{\left(1+(i\omega\tau_0)^{\alpha}\right)^{\beta}},
\end{equation}
in terms of the frequency variable $\omega,$ for  given $\epsilon_{\infty}$, $\epsilon_s$, $\tau_0$, $\alpha$ and $\beta.$
It is  of physical interest to study  the associated  reflection coefficient (cf. \cite{antonopoulos2017fdtd}) in magnitude:
\begin{equation}\label{ReflCoeff}
|\mathcal{R}(\omega)|=\big|\big(1-\sqrt{\epsilon_r(\omega)}\big)/\big(1+\sqrt{\epsilon_r(\omega)}\big)\big|.
\end{equation}
Another closely related notion is the transfer function $T(d,\omega)$ (see, e.g., \cite{Rekanos2010Auxiliary,rekanos2012fdtd,Rekanos2012b,antonopoulos2017fdtd}) given  by
\begin{equation}\label{TraFunc}
T(d,\omega)= e^{\Upsilon(\omega)d},\quad \Upsilon(\omega)
=-i\,\omega\sqrt{\epsilon_r(\omega)}/c_0:=\Upsilon_R(\omega)+i\,\Upsilon_I(\omega),
\end{equation}
where $c_0=3.0\times10^8$ is the speed of light in free space.  It describes the transfer rate  of
 the electric field in  frequency domain  from the point ${\bm x}$ to the point ${\bm x}+d$:
\begin{equation}\label{RelTraFunc}
{\widehat {\bm E}}({\bm x}+d,\omega)= T(d,\omega)\, {\widehat {\bm E}}({\bm x},\omega),
\end{equation}
where ${\widehat {\bm E}}({\bm x},\omega)$ denotes the Fourier transform of the electric field ${\bm {E}}({\bm x},t)$.

In view of the above relations, one  can compute ${\bm {E}}({\bm x},t)$  in time domain with fixed $\epsilon_{\infty}$, $\epsilon_s$, $\tau_0$, $\alpha$ and $\beta$ by solving the Maxwell's system \eqref{hnte}-\eqref{hnteiOri}, and then transform the field   to the frequency domain.  From \eqref{RelTraFunc}, we can compute the approximate transfer function $\tilde T(d,\omega)$ in $\omega$ (as the field $\bm E$ is computed numerically),  from which
we can work out the approximate $\tilde \epsilon_r(\omega)$ and $|\tilde {\mathcal{R}}(\omega)|$ by using the relations \eqref{TraFunc} and \eqref{ReflCoeff}, respectively.  We are interested in fitting and recovering the analytic values of $\epsilon_r(\omega),
|{\mathcal{R}}(\omega)|$ and  $T(d,\omega)$ (evaluated exactly by \eqref{hnperm2}-\eqref{TraFunc} with given $\epsilon_{\infty}$, $\epsilon_s$, $\tau_0$, $\alpha$ and $\beta$) by the corresponding approximate values as in \cite{Rekanos2010Auxiliary,rekanos2012fdtd,Rekanos2012b,antonopoulos2017fdtd}.

Similar to the setting in \cite{antonopoulos2017fdtd}, we consider  the Maxwell's system \eqref{hnte} in one spatial dimension with $z\in (a,b)$ and $t\in (0,T],$ but adding the source term $f(z,t):={E}_{\rm inc}(t)\chi_{z_*}(z)$ to the first equation of \eqref{hnte}.  Here,   ${E}_{\rm inc}(t)$ is  a modulated Gaussian pulse (cf.  \cite{antonopoulos2017fdtd}):
\begin{equation}
{E}_{\rm inc}(t)= e^{-a_e^2(t-4/a_e)^2}\sin(2{\pi}f_e(t-4/a_e))u(t),
\label{IncWave}
\end{equation}
where $a_e = 5 \times 10^{9}$ ${\rm s}^{-1}$, the central frequency $f_e = 6$ {\rm GHz}, and $u(t)$ is the unit step function, i.e., $u(t)=1$ when $t\ge 0$ while $u(t)=0$ when $t < 0$. Note that the energy of the pulse ranges from $0.1$ {\rm GHz} to $10$ {\rm GHz}. In the source term, $z_*\in (a,b)$ is the location where the pulse is excited, and  $\chi_{z_*}(z)=1$ at $z=z_*,$ but
it is equal to $0$ elsewhere on $(a,b).$
It is noteworthy that  the vector fields in the system \eqref{hnte} reduce to the scalar fields ${E}_x(z,t), {H}_y(z,t)$ and ${P}_x(z,t)$ in one dimension.

\medskip
For clarity, we sketch the algorithm as follows.
\smallskip
\begin{itemize}
  \item[\bf (i)]  Solve the Maxwell's system for given  $\epsilon_{\infty}$, $\epsilon_s$, $\tau_0$, $\alpha$ and $\beta$.
  Here, we adopt  the finite-difference time-domain (FDTD) method to discretise the one dimensional system \cite{antonopoulos2017fdtd}, but use the fractional integral formulation of  the polarisation relation together with the aforementioned temporal discretisation, and  fast convolution algorithm.  With these, we can obtain the numerical approximation
  $E_{x,m}^k$ of $E_x(z,t)$ on the space-time grids: $t_k=k\Delta t$ and $z_m=a+m\Delta z.$

      \smallskip
\item[\bf (ii)]   Apply the discrete Fourier transform  (cf. \cite[P. 156]{Elsherbeni2015})
to   $\big\{E_{x, m_*}^k\big\}_{k=1}^{N_t}$ and $\big\{E_{x, m_*+l}^k\big\}_{k=1}^{N_t}$ (at the locations $z_*=z_{m_*}$ and
$z=z_*+d$ with $d=l\Delta z$) from the  time domain to the  frequency domain that leads to
 $\big\{{\widehat {E}}_{x, m_*}^{\,\omega_j}\big\}_{j=1}^{N_{\omega}}$ and $\big\{{\widehat {E}}_{x, m_*+l}^{\,\omega_j}\big\}_{j=1}^{N_{\omega}},$  Then the approximate transfer function is  
    \begin{equation} \label{AppTranF}
    {\tilde T}(d,\omega_j)=  {\widehat {E}}_{x, m_*+l}^{\,\omega_j}\big/{\widehat {E}}_{x, m_*}^{\,\omega_j}.
    \end{equation}
\item[\bf (iii)] Substitute \eqref{AppTranF} into \eqref{TraFunc} leading to the approximation:
    \begin{equation}\label{RealFuncChi}
    {\tilde \Upsilon}_R(\omega_j) = \ln \big(\big| {\widehat {E}}_{x, m_*+l}^{\,\omega_j}\big/{\widehat {E}}_{x, m_*}^{\,\omega_j}\big|\big)/d,\;\;  {\tilde \Upsilon}_I(\omega_j) = \big(\arg\big\{{\widehat {E}}_{x, m_*+l}^{\,\omega_j}\big\}-\arg\big\{{\widehat {E}}_{x, m_*}^{\,\omega_j}\big\}\big)/d.
    \end{equation}
Accordingly,   we derive from \eqref{TraFunc} and \eqref{RealFuncChi} the real and the imaginary part of the approximate relative permittivity:
    \begin{equation*}
    {\tilde \epsilon}_r(\omega_j) = {\tilde \epsilon}'(\omega_j) - i{\tilde \epsilon}''(\omega_j) = -\big(c_0({\tilde \Upsilon}_R(\omega_j)+i{\tilde \Upsilon}_I(\omega_j))/\omega_j \big)^2,
    \end{equation*}
   and from  \eqref{ReflCoeff} the approximate magnitude of the reflection coefficient:
    \begin{equation*}
    |\mathcal{\tilde R}(\omega_j)|=\Big|\big(1-\sqrt{{\tilde \epsilon}_r(\omega_j)}\big)\Big/\big(1+\sqrt{{\tilde \epsilon}_r(\omega_j)}\big)\Big|.
    \end{equation*}
\end{itemize}
In the computation,   we take $a=0, b=1.1$ {\rm m}, $T=5.304 \times 10^{-9}$ {\rm s}, $\epsilon_s=50$, $\epsilon_{\infty}=2$, $\tau_0=1.53 \times 10^{-10}$ {\rm s}, $\Delta t=1.768 \times 10^{-12}$ {\rm s}, $\Delta z=1.1$ {\rm mm}, $z_*=0.55$ {\rm m}, and
sample $\big\{\omega_j\big\}_{j=1}^{N_{\omega}}$ in $(0.1, 10)$ {\rm GHz} as in  \cite{antonopoulos2017fdtd}.
In Figure \ref{fig6},  we plot the  analytical magnitude of the reflection coefficient $|\mathcal{R}(\omega)|$ and approximate values against samples of $\{\omega_j\}$  with different $\alpha, \beta$. In Figure \ref{fig7},  we show
the analytical complex transfer function $T(d,\omega_j)$ and its approximation $\tilde T(d,\omega_j)$  with different $\alpha$, $\beta$ and $d$. We observe a better approximation than that in \cite{antonopoulos2017fdtd}, which shows the advantage of our approach.  In  Figure \ref{fig8}, we depict the complex relative permittivity $\epsilon_r=\epsilon'-i\epsilon''$ and the approximate
 ${\tilde \epsilon}_r={\tilde \epsilon}'-i{\tilde \epsilon}''$ with different $\alpha$ and $\beta,$ which is not presented in \cite{antonopoulos2017fdtd}. Indeed,  we observe a good agreement of the exact and numerical values.

\begin{figure}[!th]
\subfigure[$\alpha=0.8, \beta=0.9$]{
\begin{minipage}[t]{0.42\textwidth}
\centering
\rotatebox[origin=cc]{-0}{\includegraphics[width=1\textwidth]{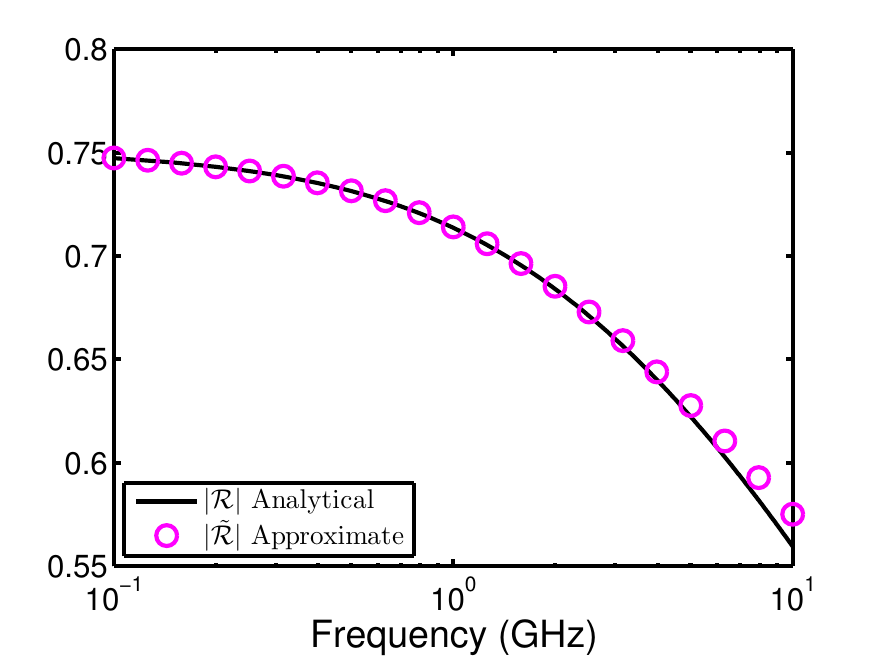}}
\end{minipage}}\quad
\subfigure[$\alpha=0.9, \beta=0.6$]{
\begin{minipage}[t]{0.42\textwidth}
\centering
\rotatebox[origin=cc]{-0}{\includegraphics[width=1\textwidth]{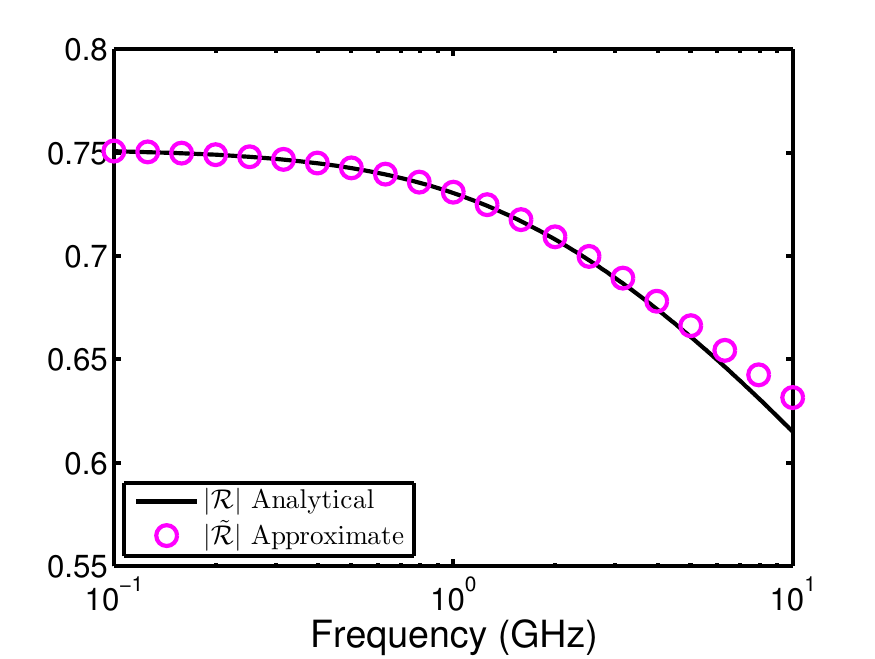}}
\end{minipage}}
\caption{\small Real and imaginary part of the analytical magnitude of the reflection coefficient and the approximate value.}
\label{fig6}
\end{figure}
\begin{figure}[!th]
\subfigure[$\alpha=0.8, \beta=0.9$]{
\begin{minipage}[t]{0.42\textwidth}
\centering
\rotatebox[origin=cc]{-0}{\includegraphics[width=1\textwidth]{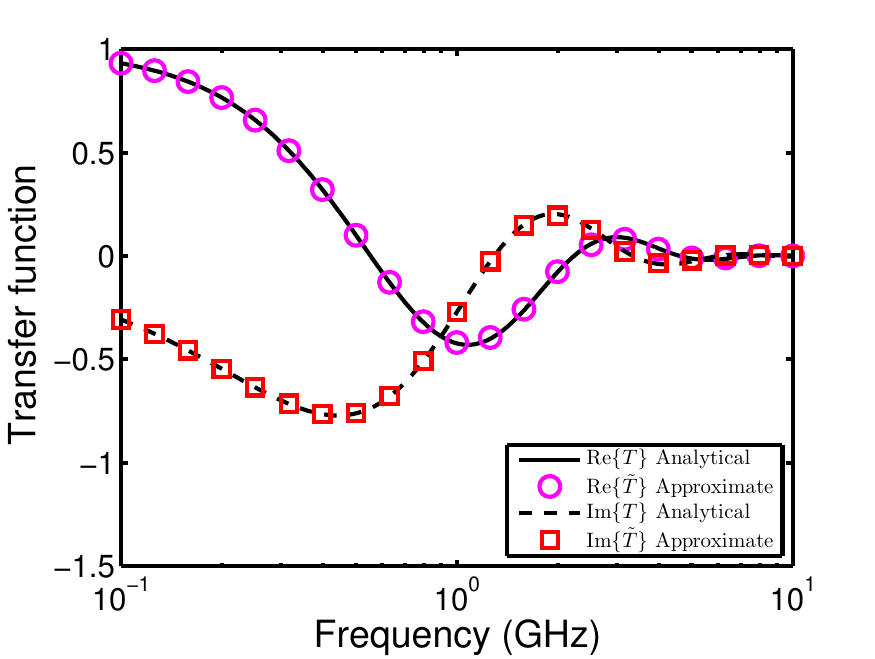}}
\end{minipage}}\quad
\subfigure[$\alpha=0.8, \beta=0.9$]{
\begin{minipage}[t]{0.42\textwidth}
\centering
\rotatebox[origin=cc]{-0}{\includegraphics[width=1\textwidth]{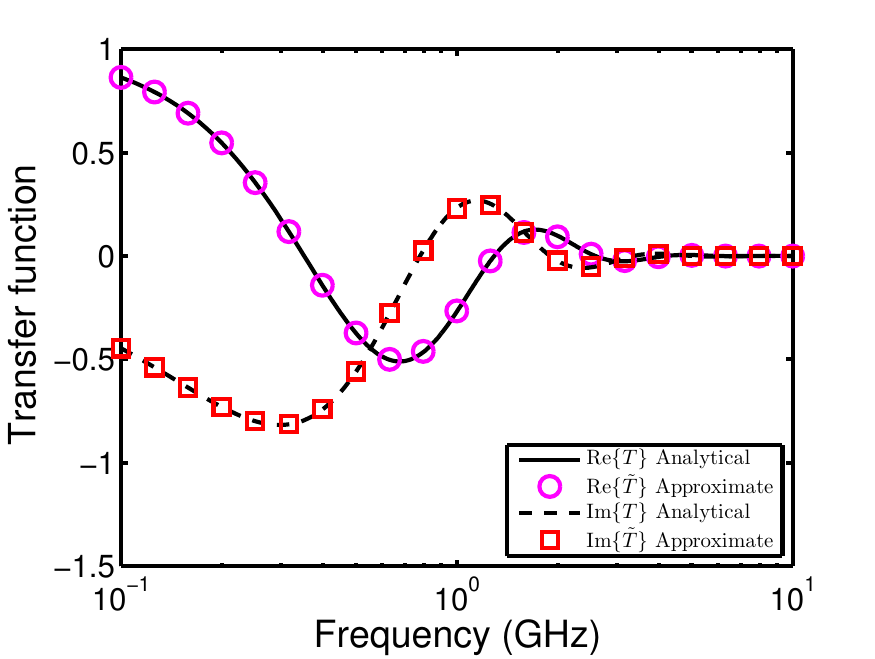}}
\end{minipage}}\quad
\subfigure[$\alpha=0.9, \beta=0.6$]{
\begin{minipage}[t]{0.42\textwidth}
\centering
\rotatebox[origin=cc]{-0}{\includegraphics[width=1\textwidth]{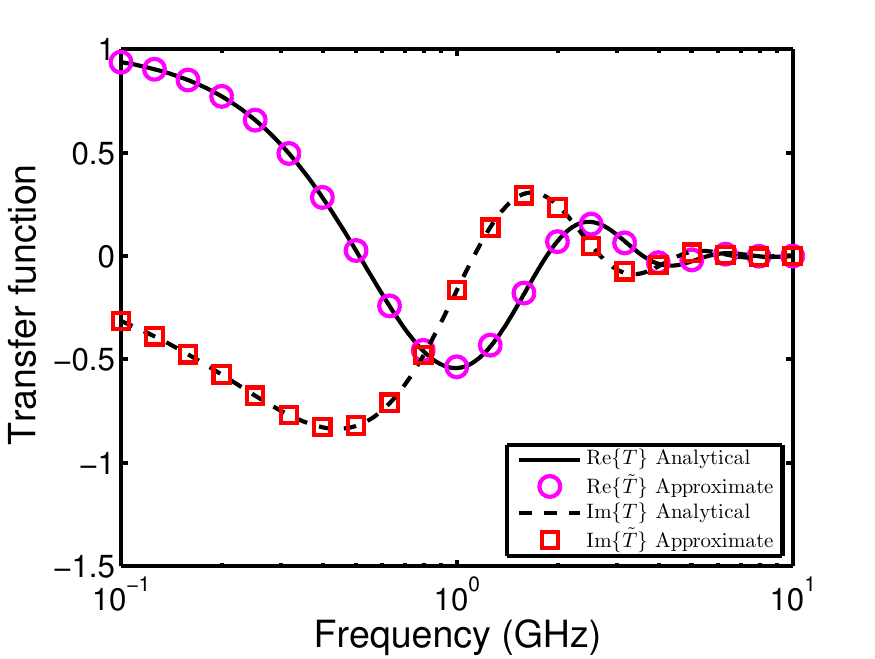}}
\end{minipage}}\quad
\subfigure[$\alpha=0.9, \beta=0.6$]{
\begin{minipage}[t]{0.42\textwidth}
\centering
\rotatebox[origin=cc]{-0}{\includegraphics[width=1\textwidth]{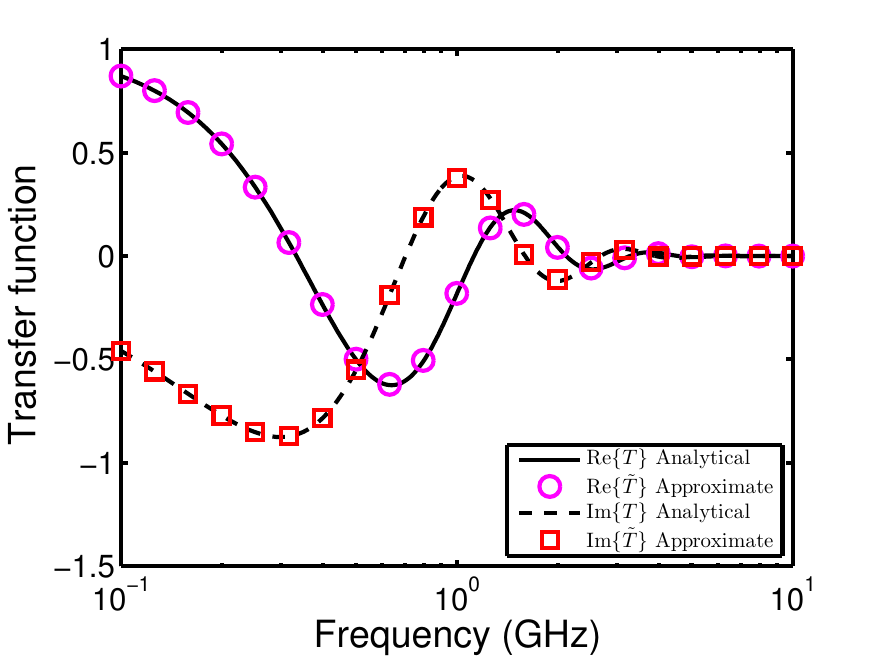}}
\end{minipage}}
\caption{\small Real and imaginary part of the analytical complex transfer function and the approximate one of the H-N medium. (Left): $d=20\Delta z$; (Right): $d=30\Delta z$.}
\label{fig7}
\end{figure}
\begin{figure}[!th]
\subfigure[$\alpha=0.8, \beta=0.9$]{
\begin{minipage}[t]{0.42\textwidth}
\centering
\rotatebox[origin=cc]{-0}{\includegraphics[width=1\textwidth]{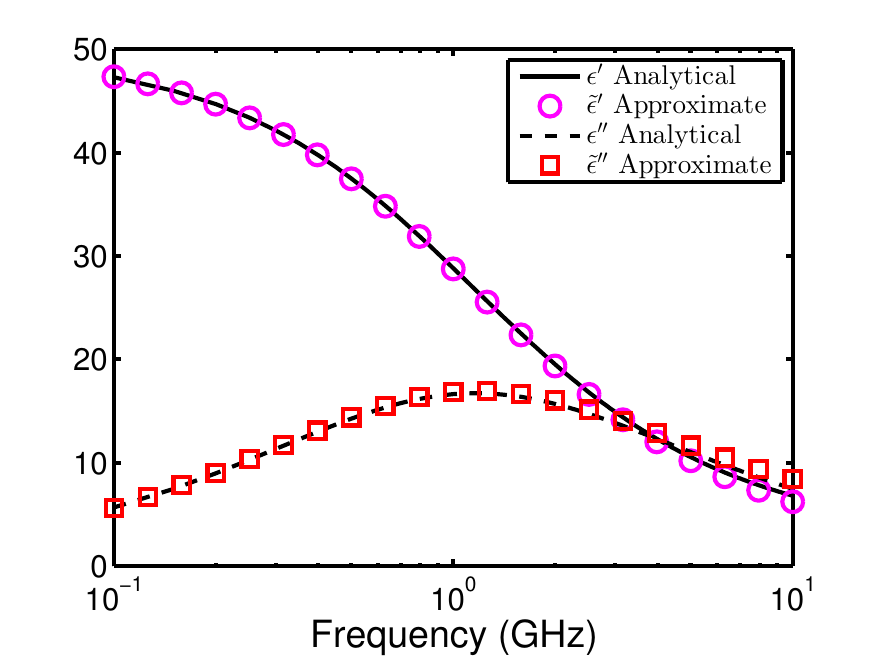}}
\end{minipage}}\quad
\subfigure[$\alpha=0.9, \beta=0.6$]{
\begin{minipage}[t]{0.42\textwidth}
\centering
\rotatebox[origin=cc]{-0}{\includegraphics[width=1\textwidth]{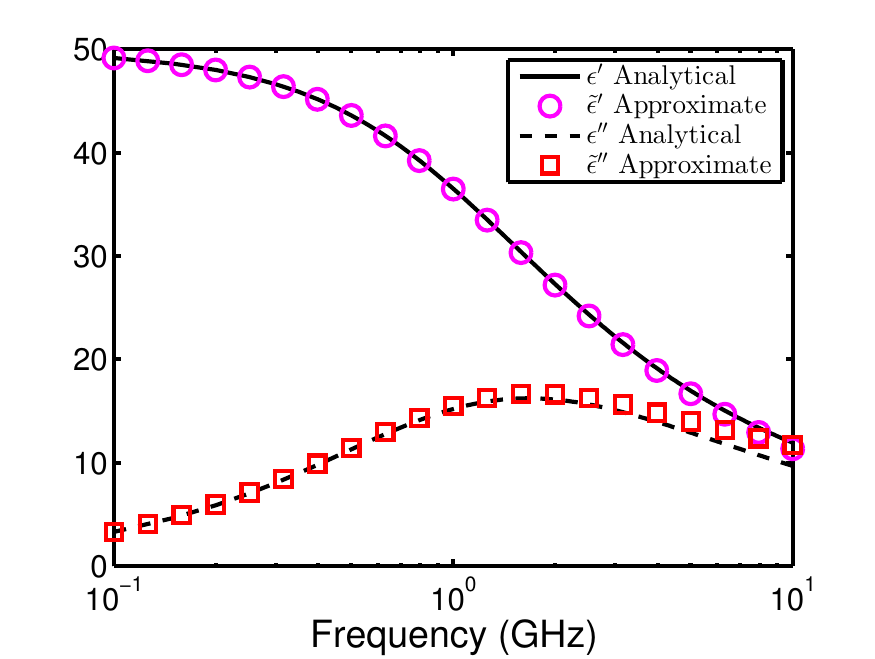}}
\end{minipage}}
\caption{\small Real and imaginary part of the analytical complex relative permittivity and the approximate one of the H-N media.}
\label{fig8}
\end{figure}

\section{Concluding remarks}\label{Sect5} 
In this paper, we rigorously showed the energy dissipation law and $L^2$-stability of  the Maxwell’s equations in a H-N dispersive medium, which were  unavailable in any literature.   We  proposed a   backward Euler-type  time discretisation based on the piecewise constant approximation of the induced fractional electric polarisation relation.  With a delicate study of the discrete weights resulted from the approximation of this relation,   we  proved the semi-discrete scheme satisfies an analogous (modified) energy dissipation law, so we  further showed the unconditional stability and  convergence of the semi-discretised scheme.  We then introduced a fast convolution algorithm so that the time-stepping algorithm can incorporate with   various spatial discretisation such as finite elements, finite differences and spectral elements. As an illustration, we  considered the full-discretisation scheme for  the two-dimensional model with the spatial discretisation by a spectral-Galerkin method, and conducted the error analysis. We provided ample numerical results to show the efficiency and robustness of the proposed method.  We also applied the solver to a real application in the recovery of  the relative permittivity, reflection coefficient and transfer function.

There are some related issues worthy of future investigation.  Here, we developed the first-order time discretisation scheme,  but the generalisation to higher order schemes appears far from trivial.  One challenge lies in how to  show the important property of the weights  similar to that in  Lemma \ref{lemdisker}.   In this work, we only implemented and analysed the spatial discretisation for two-dimensional problems, but  it is of more practical interest to consider the full three-dimensional model using e.g., the edge elements.

\begin{appendix}
\section{Properties of the Mittag-Leffler Function}\label{AppendixA}
\setcounter{equation}{0}
\renewcommand\theequation{A.\arabic{equation}}

We collect below some properties of the ML function that are used in this paper.
\smallskip
\begin{itemize}
\item  According to \cite[(5.1.6)]{gorenflo2014mittag},  the Laplace transform of the ML function in \eqref{gml}  takes the form
\begin{equation}\label{LTeML}
\mathscr{L}\left[t^{\mu-1}E_{\rho,\mu}^{\gamma}(\sigma t^\rho)\right](s)=\frac{s^{-\mu}}{(1-\sigma s^{-\rho})^{\gamma}}=\frac{s^{\rho\gamma-\mu}}{(s^{\rho}-\sigma)^{\gamma}},  \quad{\rm for}\;\, \rho,\mu>0 \;{\rm and \;real} \;\gamma.
\end{equation}
\item We have the integral identity (cf.\! \cite[(4)-(5)]{garrappa2017fractional} or \cite[(5.1.15), (5.1.19)]{gorenflo2014mittag}):
\begin{equation}\label{IntgML}
\int_{0}^{z} t^{\mu-1}E_{\rho,\mu}^{\gamma}(\sigma t^\rho)\,{\rm d}t = z^{\mu}E_{\rho,\mu+1}^{\gamma}(\sigma z^\rho),\quad {\rm for}\;\, \rho,\mu>0 \;{\rm and \;real} \;\gamma.
\end{equation}

\item We know from \cite[p.\!\! 9]{prabhakar1971singular} that for all $\rho >0$ and real $\gamma,\mu$, the Mittag-Leffler function with three parameters $E_{\rho,\mu}^{\gamma}(z)$ is bounded in a finite interval, i.e.,
\begin{equation}\label{BoundML}
|E_{\rho,\mu}^{\gamma}(z)|\le M, \quad \forall z \in [a,b],
\end{equation}
where $a, b$ and $M>0$ are constants.

\medskip
\item   For all $\sigma >0$, $t^{\mu-1}E_{\rho,\mu}^{\gamma}(-\sigma t^{\rho})$ is completely monotonic, if only if~~$0 < \rho < \mu \leq 1$ and $0 < \gamma \leq \mu/\rho$ (cf.\! \cite[(5.1.10)]{gorenflo2014mittag}),  that is,
\begin{equation}\label{CMML}
(-1)^n\frac{d^n}{dt^n}[t^{\mu-1}E_{\rho,\mu}^{\gamma}(-\sigma t^{\rho})]\geq 0,\quad \forall \;t \in (0,\infty),
\end{equation}
where we refer to \cite[Definition 3.22]{gorenflo2014mittag} for  the definition of the completely monotonicity function.
\end{itemize} 
\end{appendix}


\end{document}